\documentclass[a4paper,10pt
]{amsart}
\usepackage{amsmath, amscd}
\usepackage{amssymb}
\usepackage[alphabetic]{amsrefs}
\usepackage[T1]{fontenc}
\usepackage[all]{xy}
\usepackage{mathtools}
\usepackage{footmisc}
\usepackage{tikz-cd}
\usepackage{relsize} 
\usepackage[bbgreekl]{mathbbol} 
\usepackage{amsfonts}
\usepackage{comment}
\usepackage[colorlinks=true, hyperindex, linkcolor=magenta, citecolor=, pagebackref=false, urlcolor=blue, linktocpage]{hyperref}
\usepackage[color = blue!20, bordercolor = black, textsize = tiny]{todonotes}

\DeclareSymbolFontAlphabet{\mathbb}{AMSb} 
\DeclareSymbolFontAlphabet{\mathbbl}{bbold} 
\newcommand{\Prism}{{\mathlarger{\mathbbl{\Delta}}}}

\numberwithin{equation}{section}
\newtheorem{theorem}[subsection]{Theorem}
\newtheorem{corollary}[subsection]{Corollary}
\newtheorem{lemma}[subsection]{Lemma}
\newtheorem{proposition}[subsection]{Proposition}
\theoremstyle{definition}
\newtheorem{definition}[subsection]{Definition}
\newtheorem{remark}[subsection]{Remark}
\newtheorem{example}[subsection]{Example}

\newtheorem{construction}[subsection]{Construction}

\newcommand{\cC}{\mathcal{C}}

\newcommand{\cal}{\mathcal}

\DeclareMathOperator{\TC}{TC}

\DeclareMathOperator{\colim}{colim}



\newcommand{\arxivlink}[1]{\href{http://arxiv.org/abs/#1}{\texttt{arXiv:#1}}}


\newcommand{\perf}{\mathrm{perf}}

\newcommand{\lQSyn}{\mathrm{lQSyn}}
\newcommand{\QSyn}{\mathrm{QSyn}}

\title[Log TC via infinite root stacks]{Logarithmic TC via the Infinite Root Stack \\  and the Beilinson Fiber Square}

\author{Federico Binda}
\address{Dipartimento di Matematica ``Federigo Enriques'',  Universit\`a degli Studi di Milano\\ Via Cesare Saldini 50, 20133 Milano, Italy}
\email[F. Binda]{federico.binda@unimi.it}

\author{Tommy Lundemo}
\address{Mathematisch Instituut, Universiteit Utrecht, Budapestlaan 6, 3584 CD Utrecht}
\email[T. Lundemo]{t.lundemo@uu.nl}

\author{Alberto Merici}
\address{Institut f\"ur Mathematik, Universit\"at Heidelberg\\ MATHEMATIKON, Im Neuenheimer Feld 205, 69120  Heidelberg, Germany.}
\email[A. Merici]{merici@mathi.uni-heidelberg.de}

\author{Doosung Park}
\address{Department of Mathematics and Informatics, University of Wuppertal, Germany}
\email[D. Park]{dpark@uni-wuppertal.de}

\begin{document}

\begin{abstract} We apply our previous results on ``saturated descent'' to express a wide range of logarithmic cohomology theories in terms of the infinite root stack. Examples include the log cotangent complex, Rognes' log topological cyclic homology, and Nygaard-complete log prismatic cohomology. As applications, we show that the Nygaard-completion of the site-theoretic log prismatic cohomology coincides with the definition arising from log ${\rm TC}$, and we establish a log version of the ${\rm TC}$-variant of the Beilinson fiber square of Antieau--Mathew--Morrow--Nikolaus. 
\end{abstract}

\maketitle
\section{Introduction}

Recent years have seen several definitions of variants of Hochschild homology in the context of logarithmic geometry. Some of these include:

\begin{enumerate}
\item a definition proposed by Olsson \cite{Ols24} in terms of his stack of log structures \cite{Ols}; 
\item a definition in terms of perfect complexes on the infinite root stack of Talpo--Vistoli \cite{TV18}, due to Scherotzke--Sibilla--Talpo \cite{SST20};
\item a definition in terms of descent for the infinite root stack, proposed by Bhatt-Clausen-Mathew (and recorded in \cite{Mat21}, see e.g.\ \cite[Section 13]{DY24} for a realization of this idea in the case of prismatic cohomology); 
and
\item a definition due to Rognes \cite{Rog09}, which can be described as the derived self-intersections of the \emph{log diagonal} of Kato--Saito \cite[Section 4]{KS04}. 
\end{enumerate}

The last of these approaches has a fruitful generalization to the context of structured ring spectra (\cites{RSS15, RSS18}), which at present is not the case for any of the other definitions.  The present paper is concerned with the equivalence of the third and fourth definitions, which we now elaborate upon. As we show in Remark \ref{rem:sstcomp}, these differ from the second definition.

Let us first discuss the perspective of (3). Following the work of Talpo--Vistoli \cite{TV18}, one can associate to a $p$-complete pre-log ring $(A, M)$ its \emph{infinite root stack} $\sqrt[\infty]{{\rm Spf}(A, M)}$,  as we explain in Section \ref{sec:inftyroot}. This is a stack for the fpqc-topology, and as such it is a valid input for topological cyclic homology: this  is  proposed as the definition of logarithmic ${\rm TC}$ in \cite{Mat21}. 

It is \emph{a priori} unclear that this should relate to Rognes' definition ${\rm TC}((A, M) ; {\Bbb Z}_p)$. However, our previous works \cites{BLPO23Prism, BLMP24} show that ${\rm TC}((A, M) ; {\Bbb Z}_p)$ enjoys two fundamental features:

\begin{itemize}
\item[(a)]  In analogy with the filtrations on ordinary, non-logarithmic ${\rm TC}$ of Bhatt--Morrow--Scholze \cite{BMS19}, logarithmic ${\rm TC}$ admits filtrations with graded pieces described in terms of (Nygaard-completed) log prismatic cohomology $R\Gamma_{\widehat{\Prism}}({\rm Spf}(A, M))$; and
\item[(b)]  log prismatic cohomology $R\Gamma_{\widehat{\Prism}}({\rm Spf}(A, M))$ satisfies \emph{saturated descent}. We explain in Section \ref{subsec:satcechroot} that this computes log prismatic cohomology in terms of the infinite root stack. Filtrations related to those of (a) can then be used to show the same for logarithmic ${\rm TC}$ (Proposition \ref{prop:logtcsatdesc}). 
\end{itemize}

\noindent These observations accumulate to the following result:

\begin{theorem}[Section \ref{sec:tcinftyroot}]\label{thm:tcinftyroot} 
If $(A, M)$ is a quasisyntomic pre-log ring such that \begin{enumerate}
\item[(a)] $M$ is a fine, saturated, and $p$-torsion-free monoid; and
\item[(b)] the (derived) tensor product $A \widehat{\otimes}_{{\Bbb Z}_p\langle M \rangle} {\Bbb Z}_p\langle M_{\rm perf} \rangle$ is quasisyntomic.
\end{enumerate} Then there is an equivalence \[{\rm TC}((A, M) ; {\Bbb Z}_p) \simeq  {\rm TC}(\sqrt[\infty]{{\rm Spf}(A, M)} ; {\Bbb Z}_p)\]  of ${\Bbb E}_{\infty}$-rings relating Rognes' log ${\rm TC}$ and ${\rm TC}$ of the infinite root stack.
\end{theorem}

Here, ${\Bbb Z}_p\langle M \rangle$ denotes the $p$-completion of the monoid ring ${\Bbb Z}_p[M]$. In Theorem \ref{thm:tcinftyroot}, the monoid $M_{\rm perf}$ is the \emph{$p$-perfection} of $M$. For example, the $p$-perfection of the additive monoid of natural numbers ${\Bbb N}$ is ${\Bbb N}_{\rm perf} \cong {\Bbb N}[\frac{1}{p}]$.

Crucially, the assumptions of Theorem \ref{thm:tcinftyroot} hold for all $p$-complete polynomial pre-log rings $({\Bbb Z}_p\langle x_1, \dots, x_n, y_1, \dots, y_m \rangle, \langle y_1, \dots, y_m \rangle)$. In light of this, we may consider the left Kan extension \begin{equation}\label{eq:leftkanroot} \sqrt[\infty]{{\rm TC}}((-, -) ; {\Bbb Z}_p) \colon {\rm Ani}({\rm lPoly}^\wedge_{{\Bbb Z}_p}) \to {\rm Sp}_p\end{equation} of the functor ${\rm TC}(\sqrt[\infty]{{\rm Spf}(-, -)} ; {\Bbb Z}_p)$ from $p$-complete polynomial pre-log rings to all $p$-complete animated pre-log rings. Here, ${\rm Sp}_p \subset {\rm Sp}$ denotes the full subcategory consisting of $p$-complete objects in spectra. Once restricted to quasisyntomic pre-log rings, Rognes' log ${\rm TC}$ is also left Kan extended from ${\rm lPoly}_{{\Bbb Z}_p}$ by Theorem \ref{thm:logtcleftkan}. The resulting extension of Rognes' log ${\rm TC}$ to all $p$-complete animated pre-log rings thus agrees with the original one for quasisyntomic pre-log rings, and Theorem \ref{thm:tcinftyroot} continues to hold for all $p$-complete animated pre-log rings, since it does for all $p$-complete polynomial pre-log rings.

\subsection{Comparison with site-theoretic log prismatic cohomology} The definition of Nygaard-complete log prismatic cohomology $\widehat{\Prism}$ of \cite{BLPO23Prism} is obtained by mimicking the construction of \cite{BMS19} in the log setting. There is a definition of (derived) log prismatic cohomology $\Prism$ that more closely mirrors the site-theoretic approach of Bhatt--Scholze \cite{BS22}, due to Koshikawa \cite{Kos22} and further developed by Koshikawa--Yao \cite{KY23} and Diao--Yao \cite{DY24}. As we discuss in Section \ref{sec:sitecomp}, the point of comparing the two approaches by means of Nygaard-completion (in analogy with \cite[Theorem 13.1]{BS22}) has remained subtle due to the elusive nature of the Nygaard filtration in the log setting; in particular, the Nygaard filtration in the log setting may fail to be discrete even in the log quasiregular semiperfectoid case (cf.\ \cite[Example 5.2]{KY23}). Theorem \ref{thm:tcinftyroot} sheds light on this, and finally puts us in a position to prove:

\begin{theorem}[Section \ref{sec:sitecomp}]\label{thm:nygaard} 
Let $(A, I)$ be a perfect prism. There is a map \[R\Gamma_{\Prism}(- / A) \to R\Gamma_{\widehat{\Prism}}(- / A) \simeq R\Gamma_{\widehat{\Prism}}(-) \] which exhibits the target as the Nygaard-completion of the source. 
\end{theorem}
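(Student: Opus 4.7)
The plan is to use saturated descent to reduce to the log quasiregular semiperfectoid (lqrsp) case, and then to use the infinite root stack description together with Theorem \ref{thm:tcinftyroot} to circumvent the known obstruction that the log Nygaard filtration need not be discrete (cf.\ \cite[Example 5.2]{KY23}). The comparison map itself is constructed using the universal property of the site-theoretic source $R\Gamma_{\Prism}(-/A)$, which carries a Nygaard filtration by \cite{KY23}, together with the fact that $R\Gamma_{\widehat{\Prism}}$ is Nygaard-complete by construction in \cite{BLPO23Prism}. The identification $R\Gamma_{\widehat{\Prism}}(-/A) \simeq R\Gamma_{\widehat{\Prism}}(-)$ would be handled separately by a standard absolute-versus-relative argument for perfect prisms, extended to the log setting.

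To reduce to the lqrsp case, I would use that both sides satisfy saturated descent---the source by \cite{DY24, KY23}, the target by point (b) in the introduction. A saturated \v{C}ech cover can be arranged to have lqrsp terms, and since both sides are then concentrated in degree $0$, Nygaard completion of the source is the ordinary completion of a filtered abelian group. In this setting, I would analyze both theories via the tower of finite root stacks: by (b),
\[ R\Gamma_{\widehat{\Prism}}(A,M) \simeq R\Gamma_{\widehat{\Prism}}(\sqrt[\infty]{{\rm Spf}(A,M)}) \simeq \lim_n R\Gamma_{\widehat{\Prism}}(\sqrt[n]{{\rm Spf}(A,M)}), \]
and on each $\sqrt[n]{{\rm Spf}(A,M)}$ the log structure becomes (Kummer-)invertible after $p$-completion, so that Bhatt--Scholze's \cite[Theorem 13.1]{BS22} identifies each term in the limit with the Nygaard completion of the ordinary site-theoretic $\Prism$ of the corresponding cover.

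The main obstacle is then to show that Nygaard completion commutes with this limit in the log setting despite the filtration possibly being non-discrete. My plan to overcome this is to invoke Theorem \ref{thm:tcinftyroot}: the motivic filtration on Rognes' log $\mathrm{TC}$ has Nygaard-complete log prismatic graded pieces by \cite{BLPO23Prism}, and Theorem \ref{thm:tcinftyroot} identifies these with the graded pieces of the motivic filtration on $\mathrm{TC}$ of the root stack, which by descent along the tower of finite root stacks are precisely the limit of Nygaard completions of ordinary site-theoretic $\Prism$. Since the Nygaard filtration on $R\Gamma_{\Prism}(-/A)$ is compatible with this descent, the resulting identification realizes the target as the Nygaard completion of the source, completing the proof. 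In effect, Theorem \ref{thm:tcinftyroot} is what supplies the non-formal input needed to exchange Nygaard completion with the limit along the root stack tower.
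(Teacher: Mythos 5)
Your overall strategy (reduce to the log quasiregular semiperfectoid case by unfolding, then use descent to the infinite root stack and the levelwise non-log comparison of \cite[Theorem 13.1]{BS22}) is the right skeleton and matches the paper's. However, two of your key mechanisms do not work as stated. First, the tower of \emph{finite} root stacks is the wrong descent structure: the paper's Remark \ref{rem:sstcomp} shows explicitly that the interchange of (co)limits needed to pass from the finite root stacks $\sqrt[n]{{\rm Spf}(A,M)}$ to $\sqrt[\infty]{{\rm Spf}(A,M)}$ fails (this is precisely why the Scherotzke--Sibilla--Talpo definition differs from the one used here). Moreover, on a finite root stack the log structure does \emph{not} become invertible after $p$-completion --- the monoid $\tfrac{1}{n}M$ is not semiperfect --- so \cite[Theorem 13.1]{BS22} cannot be applied levelwise along that tower. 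The object that actually makes the argument go through is the saturated \v{C}ech nerve of the single quasisyntomic cover ${\rm Spf}(A\widehat{\otimes}_{R\langle M\rangle}R\langle M_{\rm perf}\rangle)\to\sqrt[\infty]{{\rm Spf}(A,M)}$ (Proposition \ref{prop:fscechnerve}): its terms are honest non-log quasiregular semiperfectoid rings, and there the non-log \cite[Theorem 13.1]{BS22} applies term by term.

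Second, Theorem \ref{thm:tcinftyroot} is not the non-formal input, and invoking the motivic filtration on log ${\rm TC}$ to control the Nygaard filtration risks circularity, since the identification of those graded pieces with the (completed) site-theoretic theory is close to what is being proved. The paper instead handles the possible non-discreteness of the log Nygaard filtration by \emph{defining} a Nygaard filtration on $\Prism_{(R,P)/A}$ as the totalization of the levelwise non-log Nygaard filtrations along the saturated \v{C}ech nerve relative to the auxiliary perfectoid bases $A\langle P_{\rm perf}^{\oplus^{\rm ex}\bullet+1}\rangle$ (Definition \ref{def:nygaardfil}), checking in Proposition \ref{prop:nygaardcomp} that this agrees with the Koshikawa--Yao filtration after a Frobenius twist, and then matching it with the left-Kan-extended filtration on $\widehat{\Prism}^{\rm nc}$ via the levelwise equivalences of \cite[Theorem 13.1]{BS22} together with saturated descent for both sides (Lemmas \ref{lem:polynygaard} and \ref{lem:intextnygaard}); saturated descent for the site-theoretic side is itself reduced to the log cotangent complex via the Hodge--Tate comparison (Lemma \ref{lem:prisminftyroot}). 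You would need to replace your root-stack-tower and ${\rm TC}$-based steps with an argument of this kind for the proof to close.
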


As the proof will show, the map of Theorem \ref{thm:nygaard} is functorial as the base prism $(A, I)$ is fixed. An analog of the stronger \cite[Theorem 13.1]{BS22}, where the functoriality does not depend on such a choice, would require an analog of \cite[Lemma 13.2]{BS22} in the log setting. 

\subsection{A log variant of the Beilinson fiber square} 
Let us recall that the Beilinson fiber square of Antieau--Mathew--Morrow--Nikolaus takes the form \begin{equation}\label{eq:originalbeilinson}\begin{tikzcd}K(R ; {\Bbb Q}_p) \ar{r} \ar{d} & K(R/p ; {\Bbb Q}_p) \ar{d} \\ {\rm HC}^-(R ; {\Bbb Q}_p) \ar{r} & {\rm HP}(R ; {\Bbb Q}_p)\end{tikzcd}\end{equation} for commutative rings $R$ that are henselian along $(p)$, cf.\ \cite[Theorem A]{AMMN22}. This is used to study $p$-adic deformations of $K$-theory classes (see \cite{BEK14}), and comes to life from an analogous diagram with $K$-theory replaced by ${\rm TC}$ and the Dundas--Goodwillie--McCarthy theorem \cite{DGM13}. We have the following variant of \eqref{eq:originalbeilinson}:

\begin{theorem}[Section \ref{sec:logbeilinson}]\label{thm:logbeilinson} Let $(R, P)$ be a pre-log ring. There is a square of the form \[\begin{tikzcd}{\rm TC}((R, P) ; {\Bbb Z}_p) \ar{r} \ar{d} & {\rm TC}((R \otimes_{\Bbb S} {\Bbb F}_p, P) ; {\Bbb Z}_p) \ar{d} \\ {\rm HC}^-((R, P) ; {\Bbb Z}_p) \ar{r} & {\rm HP}((R, P) ; {\Bbb Z}_p).\end{tikzcd}\] The square becomes cartesian after inverting $p$, and the canonical map  \[{\rm TC}((R \otimes_{\Bbb S} {\Bbb F}_p, P) ; {\Bbb Q}_p) \xrightarrow{} {\rm TC}((R/p, P) ; {\Bbb Q}_p)\] is an equivalence. 
\end{theorem}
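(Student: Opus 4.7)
The plan is to deduce the logarithmic Beilinson fiber square from the non-logarithmic theorem of Antieau--Mathew--Morrow--Nikolaus \cite{AMMN22} via the saturated/infinite-root-stack descent developed in Theorem \ref{thm:tcinftyroot}. First, by left Kan extension from polynomial pre-log rings --- which applies to $\TC((-,-); {\Bbb Z}_p)$ by the discussion following Theorem \ref{thm:tcinftyroot}, and to $\mathrm{HC}^-((-,-); {\Bbb Z}_p)$ and $\mathrm{HP}((-,-); {\Bbb Z}_p)$ via the analogous property of Rognes' log $\HH$ --- I would reduce to the case where $(R, P)$ is a $p$-complete polynomial pre-log ring. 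Then Theorem \ref{thm:tcinftyroot} gives $\TC((R,P); {\Bbb Z}_p) \simeq \TC(\sqrt[\infty]{\mathrm{Spf}(R,P)}; {\Bbb Z}_p)$, and similarly for $(R\otimes_{{\Bbb S}}{\Bbb F}_p, P)$ in place of $(R, P)$.

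Second, I would establish the corresponding identifications for $\mathrm{HC}^-$ and $\mathrm{HP}$ by running the saturated descent argument of Section \ref{subsec:satcechroot} with $\HH$ and its $S^1$-equivariant/Tate variants in place of log prismatic cohomology. This should yield
\[
\mathrm{HC}^-((R,P); {\Bbb Z}_p) \simeq \mathrm{HC}^-(\sqrt[\infty]{\mathrm{Spf}(R,P)}; {\Bbb Z}_p), \quad \mathrm{HP}((R,P); {\Bbb Z}_p) \simeq \mathrm{HP}(\sqrt[\infty]{\mathrm{Spf}(R,P)}; {\Bbb Z}_p).
\]
With these in hand, the square of Theorem \ref{thm:logbeilinson} becomes the non-log Beilinson fiber square for $\sqrt[\infty]{\mathrm{Spf}(R,P)}$.

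Now, the infinite root stack is a cofiltered limit of $n$-th root stacks, each of which admits an fpqc cover by an ordinary affine scheme on which the non-log Beilinson fiber square for $\TC$ of \cite{AMMN22} applies directly. Descending along this cover and passing to the limit over $n$ --- operations that preserve cartesianness of squares --- should yield the cartesianness assertion after inverting $p$. Part (c) is deduced in the same way from the corresponding non-log equivalence $\TC(R\otimes_{{\Bbb S}}{\Bbb F}_p; {\Bbb Q}_p) \simeq \TC(R/p; {\Bbb Q}_p)$ applied at each layer of the root stack tower.

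The main obstacle is the establishment of saturated descent for $\mathrm{HC}^-$ and especially for $\mathrm{HP}$: the Tate construction does not a priori commute with descent, so one must carefully handle $p$-completion and the convergence of the descent spectral sequence. This should be achievable by adapting the methods of our prior work \cite{BLMP24}, but may require nontrivial verification of boundedness properties on the layers of the root-stack tower.
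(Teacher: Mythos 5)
Your approach has genuine gaps, and it is also much more roundabout than what is needed. The paper proves this theorem (via Proposition \ref{prop:prelimsquare}) by applying the cyclotomic-spectrum-level results of \cite{AMMN22} (notably their Corollary 2.10 and Remark 2.4) directly to $X = {\rm THH}(R, P)$: the top square compares ${\rm TC}$ of ${\rm THH}(R,P)$ with ${\rm TC}$ of ${\rm THH}(R,P) \otimes_{\Bbb S} {\rm THH}({\Bbb F}_p) \simeq {\rm THH}(R\otimes_{\Bbb S}{\Bbb F}_p, P)$, the bottom square is handled by the fiber sequence of \cite[Corollary I.4.3]{NS18}, and the final equivalence follows because ${\rm THH}((R\otimes_{\Bbb S}{\Bbb F}_p, P);{\Bbb Z}_p)\to{\rm THH}((R/p,P);{\Bbb Z}_p)$ is the base change of a quasi-isogeny along the repletion map ${\Bbb S}_p[B^{\rm cyc}(P)]\to{\Bbb S}_p[B^{\rm rep}(P)]$, which is right $t$-exact for the cyclotomic $t$-structure. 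No descent, no root stack, and no hypotheses on $(R,P)$ are needed.

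Concretely, two steps of your proposal would fail. First, the reduction to polynomial pre-log rings by left Kan extension is not available: the theorem is stated for an \emph{arbitrary} pre-log ring $(R,P)$, and Rognes' log ${\rm TC}$ is only known to be left Kan extended on ${\rm lQSyn}$ (Theorem \ref{thm:logtcleftkan}); worse, ${\rm HC}^-$ and ${\rm HP}$ are built from $S^1$-homotopy fixed points and Tate constructions, hence are limits and do not commute with sifted colimits, so the bottom row of the square cannot be left Kan extended from polynomials at all. Second, even on the subcategory ${\rm lQSyn}^{\rm fsh}$ where saturated descent applies, your final step does not go through: the square at each level of the \v{C}ech nerve is cartesian only \emph{after inverting $p$}, and inverting $p$ (a filtered colimit) does not commute with the cosimplicial totalization computing the cohomology of the infinite root stack. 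A totalization of squares whose total fibers are each killed by inverting $p$ need not have total fiber killed by inverting $p$ without uniform torsion bounds, so levelwise rational cartesianness does not descend. The correct way to sidestep both issues is exactly the paper's: work at the level of the single cyclotomic spectrum ${\rm THH}(R,P)$ and track quasi-isogenies, which are stable under the base change along repletion.
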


The techniques leading to Theorem \ref{thm:logbeilinson} do not directly apply to give a $K$-theory version of the Beilinson fiber square in the log setting. In fact, the results of this paper suggest that the core obstruction to defining algebraic $K$-theory in the general logarithmic context is its lack of quasisyntomic descent. We are nonetheless investigating alternative ways of having algebraic $K$-theory interact with the infinite root stack in a manner that allows us to study $p$-adic deformations of $K$-theory classes in the semistable setting, and we intend to pursue this in future work. Let us note, however, that Theorem \ref{thm:logbeilinson} already improves upon results in the literature:

\begin{theorem}\label{thm:diaoyao}Let $(R, P)$ be a pre-log ring. The square \[\begin{tikzcd}{\Bbb Q}_p(n)(R, P) \ar{r} \ar{d} & {\Bbb Q}_p(n)(R/p, P) \ar{d} \\ L\Omega^{\ge n}_{(R, P) / {\Bbb Z}_p}\{n\} \otimes_{{\Bbb Z}_p} {\Bbb Q}_p \ar{r} & L\Omega_{(R, P) / {\Bbb Z}_p}\{n\} \otimes_{{\Bbb Z}_p} {\Bbb Q}_p\end{tikzcd}\] is cartesian.
\end{theorem}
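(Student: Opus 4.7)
The plan is to deduce Theorem \ref{thm:diaoyao} from Theorem \ref{thm:logbeilinson} by passing to the $n$-th associated graded piece with respect to a suitable motivic filtration on each corner of the Beilinson fiber square, following the strategy used in the non-log setting (cf.\ the deduction of the syntomic variant of \cite[Theorem A]{AMMN22}). Under the stated hypotheses, log $\mathrm{TC}((S, Q); \Z_p)$ admits a motivic filtration whose $n$-th graded piece recovers the log syntomic cohomology $\Z_p(n)(S, Q)$ (up to a shift), as developed in \cite{BLMP24}; the same holds for $(S/p, Q)$. Meanwhile, log $\mathrm{HC}^-((S, Q); \Z_p)$ and log $\mathrm{HP}((S, Q); \Z_p)$ carry HKR-type filtrations whose $n$-th graded pieces are $L\Omega^{\geq n}_{(S, Q)/\Z_p}\{n\}$ and $L\Omega_{(S, Q)/\Z_p}\{n\}$ respectively. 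The quasisyntomic and $p$-complete flatness hypotheses ensure that the derived de Rham complexes admit the required description and that $p$-completion drops out rationally.

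The first step is to promote the cartesian square of Theorem \ref{thm:logbeilinson} (after inverting $p$) to a cartesian square of complete exhaustive filtered spectra. The cyclotomic trace $\mathrm{TC} \to \mathrm{HC}^-$ is filtered by construction, and the canonical map $\mathrm{HC}^- \to \mathrm{HP}$ is filtered by standard properties of the HKR filtration. The additional input is that the rational equivalence $\mathrm{TC}((R \otimes_{\S} \F_p, P); \Q_p) \simeq \mathrm{TC}((R/p, P); \Q_p)$ provided by Theorem \ref{thm:logbeilinson} is compatible with the motivic filtration, so that the top right corner inherits a filtration whose $n$-th graded piece is $\Q_p(n)(S/p, Q)$.

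The second step is to take the $n$-th associated graded of the resulting filtered cartesian square. Since the motivic and HKR filtrations are complete and exhaustive under the quasisyntomic hypothesis, cartesianness is preserved on graded pieces, and one obtains exactly the square of Theorem \ref{thm:diaoyao}.

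The main obstacle will be the careful verification of filtration compatibility for the rational equivalence on the top right corner, together with ensuring that the motivic filtration on log $\mathrm{TC}$ of a characteristic-$p$ pre-log ring behaves as expected after rationalization. Both issues should ultimately reduce to their non-log counterparts via the saturated descent established earlier in the paper (in particular Theorem \ref{thm:tcinftyroot}), which presents log $\mathrm{TC}$ in terms of the infinite root stack and thus allows one to import the filtered arguments of \cite{AMMN22} term-by-term along the \v Cech nerve of the root stack cover.
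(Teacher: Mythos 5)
Your strategy of taking associated graded pieces of a filtered refinement of the square in Theorem \ref{thm:logbeilinson} is close in spirit to the paper's argument (which, after unfolding to the log quasiregular semiperfectoid case, uses even-degree concentration and the truncations $\tau_{[2n-1,2n]}$ --- i.e.\ the double-speed Postnikov filtration --- to extract the square of graded pieces). However, there is a genuine gap: the graded pieces of the HKR-type filtrations on ${\rm HC}^-((S,Q);{\Bbb Z}_p)$ and ${\rm HP}((S,Q);{\Bbb Z}_p)$ are the \emph{Hodge-completed} derived log de Rham complexes $\widehat{L\Omega}^{\ge n}_{(S,Q)/{\Bbb Z}_p}\{n\}$ and $\widehat{L\Omega}_{(S,Q)/{\Bbb Z}_p}\{n\}$, not the uncompleted ones you assert. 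Your argument therefore only yields the Hodge-completed version of the square, which is essentially \cite[Theorem 13.8]{DY24}; the entire content of Theorem \ref{thm:diaoyao} beyond \emph{loc.\ cit.}\ is the removal of the Hodge completion, and nothing in your proposal addresses this.

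The missing step is the following. One must show that the map ${\Bbb Z}_p(n)(S,Q) \to \widehat{L\Omega}_{(S,Q)/{\Bbb Z}_p}$ appearing in the completed square factors through the uncompleted $L\Omega_{(S,Q)/{\Bbb Z}_p}$. The paper does this by combining the quasi-isogeny witness from the proof of Theorem \ref{thm:logbeilinson} (giving a dashed splitting of ${\Bbb Z}_p(n)(S,Q) \leftarrow \pi_{2n}{\rm TC}((S\otimes_{\Bbb S}{\Bbb F}_p, Q);{\Bbb Z}_p) \to \widehat{L\Omega}_{(S,Q)/{\Bbb Z}_p}$ up to isogeny) with Theorem \ref{thm:logsynleftkan}: since ${\Bbb Z}_p(n)(-,-)$ is left Kan extended from finitely generated $p$-complete polynomial pre-log rings, where completed and uncompleted derived log de Rham cohomology agree, the natural transformation to $\widehat{L\Omega}$ lifts canonically to one valued in $L\Omega$, and inverting $p$ produces the uncompleted bottom row. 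Without this (or some substitute), your filtered square cannot produce the statement as written.
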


If derived log de Rham cohomology is replaced with its Hodge-completion, this is \cite[Theorem 13.8]{DY24}, so that Theorem \ref{thm:diaoyao} is a  generalization of \emph{loc.\ cit.} 

\subsection{Acknowledgments} We thank Teruhisa Koshikawa for mentioning to us the idea of relating the saturated fiber product with the definition of the infinite root stack. F.B.
is partially supported by the PRIN 2022 ``The arithmetic of motives and L-functions''
at MUR (Italy). A.M. was partially
supported by The European Commission – Horizon-MSCA-PF-2022 ``Motivic integral
p-adic cohomologies'' and the Collaborative Research Centre TRR 326 GAUS - Geometry and Arithmetic of Uniformized Structures. T.L and D.P.  are partially supported by the research training group GRK 2240 ``Algebro-Geometric Methods in Algebra, Arithmetic and Topology''. T.L. is partially supported by the NWO-grant VI.Veni.242.129.  Finally, we thank the anonymous referee for a helpful report. 

\subsection{Outline} In Section \ref{sec:recollect} we recall necessary prerequisite material on log ${\rm TC}$ and prismatic cohomology. In Section \ref{sec:satdescent} we discuss saturated descent and explain its relationship to the infinite root stack, while in Section \ref{sec:tcinftyroot} we prove Theorem \ref{thm:tcinftyroot}. This is used in Section \ref{sec:sitecomp} to prove Theorem \ref{thm:nygaard} and in Section \ref{sec:logsynleftkan} to exhibit log syntomic cohomology as a left Kan extension. This, in turn, is used in the final Section \ref{sec:logbeilinson} to establish Theorems \ref{thm:logbeilinson} and \ref{thm:diaoyao}. 

\section{Recollections on log \texorpdfstring{${\rm TC}$}{TC} and prismatic cohomology}\label{sec:recollect}  We provide a brief recollection of Rognes' approach to log topological Hochschild homology \cite{Rog09} and the resulting definition of Nygaard-complete log prismatic cohomology \cite{BLPO23Prism}. We refer to \cite[Section 2]{BLPO23Prism} for a more detailed exposition. 

\subsection{The replete bar construction} Fix a ground animated commutative monoid $P$. The \emph{cyclic bar construction} \[B^{\rm cyc}_P(-) \colon {\rm Ani}({\rm CMon})_{P/} \to {\rm Ani}({\rm CMon})_{P/}\] is the tensoring $S^1 \otimes_P -$.  For any $M \in {\rm Ani}({\rm CMon})_{P/}$, the maps $* \to S^1 \to *$ exhibits $B^{\rm cyc}_P(M)$ as an augmented $M$-algebra; that is, as an object of ${\rm Ani}({\rm CMon})_{M // M}$. This additional structure is used to define the \emph{replete bar construction} $B^{\rm rep}_P(M)$ as the pullback \begin{equation}\label{eq:reppull}\begin{tikzcd}B^{\rm rep}_P(M) \ar{r} \ar{d} & B^{\rm cyc}_P(M)^{\rm gp} \ar{d} \\ M \ar{r} & M^{\rm gp}\end{tikzcd}\end{equation} of animated commutative monoids, where the bottom horizontal map is the canonical map from $M$ to its group completion (realized, for instance, as the unit of the suspension/loop-adjunction of the pointed category ${\rm Ani}({\rm CMon})$). 

\subsection{Animated log rings} We now define ${\rm Ani}({\rm PreLog})$ as the Grothendieck construction/unstraightening of the functor \begin{equation}\label{eq:pseudofunctor}{\rm Ani}({\rm CMon}) \to {\rm Cat}_{\infty}, \qquad P \mapsto {\rm Ani}({\rm CRing})_{{\Bbb Z}[P] / }.\end{equation} More concretely, then, an animated pre-log ring is a pair $(R, P)$ with $R$ an animated commutative ring, $P$ an animated commutative monoid, together with ${\Bbb Z}[P] \to R$ a map of animated commutative rings. This determines a map $\beta \colon P \to (R, \cdot)$ to the underlying multiplicative animated commutative monoid of $R$. We say that $(R, P)$ is an \emph{animated log ring} if $\beta^{-1}{\rm GL}_1(R) \simeq {\rm GL}_1(R)$.\footnote{We warn the reader who chooses to read ``animated'' as ``simplicial'' that the category of animated log rings is \emph{not} modelled by that of simplicial objects in log rings, cf.\ \cite[Remark 3.8]{SSV16}. Rather, the category of animated log rings is a localization of that of animated pre-log rings, cf.\ \cite[Section 3.2]{SSV16}.} The inclusion of animated log rings into animated pre-log rings admits a left adjoint under which all constructions we consider in this paper are invariant. In \cite[Section 2.2]{BLPO23Prism} the category ${\rm Ani}({\rm PreLog})$ is constructed as the animation of the category of polynomial pre-log rings $({\Bbb Z}[x_1, \dots, x_n, y_1, \dots y_m], \langle y_1, \dots, y_m \rangle),$ and in \cite[Remark 5.2]{Lun23} a concrete construction of the functor \eqref{eq:pseudofunctor} is provided.

\subsection{Log topological Hochschild homology} Let $(R, P)$ be an animated pre-log ring. Following Rognes \cite[Section 8]{Rog09}, we define \emph{log topological Hochschild homology} as the relative tensor product \[{\rm THH}(R, P) := {\rm THH}(R) \otimes_{{\Bbb S}[B^{\rm cyc}(P)]} {\Bbb S}[B^{\rm rep}(P)]\] of ${\Bbb E}_{\infty}$-rings, where the morphism ${\Bbb S}[B^{\rm cyc}(P)] \to {\Bbb S}[B^{\rm rep}(P)]$ is induced by the universal property of the pullback \eqref{eq:reppull} along the canonical map $B^{\rm cyc}(P) \to B^{\rm cyc}(P)^{\rm gp}$ and the collapse map $B^{\rm cyc}(P) \to P$. This definition has an obvious relative generalization ${\rm THH}((A, M) / (R, P))$ by forming all cyclic bar constructions in the relevant comma-categories, and this construction enjoys the usual transitivity property ${\rm THH}((B, N) / (A, M)) \simeq A \otimes_{{\rm THH}((A, M) / (R, P))} {\rm THH}((B, N) / (R, P))$ for a composite $(R, P) \to (A, M) \to (B, N)$ \cite[Lemma 5.4]{Lun21}. 

\begin{lemma}\label{lem:commuteswithcolims} The functor \[{\rm THH}(-, -) \colon {\rm Ani}({\rm PreLog}) \to {\rm Sp}\] commutes with sifted colimits. 
\end{lemma}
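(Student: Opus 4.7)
The plan is to establish sifted-colimit preservation for each ingredient in the defining formula
\[{\rm THH}(R, P) = {\rm THH}(R) \otimes_{{\Bbb S}[B^{\rm cyc}(P)]} {\Bbb S}[B^{\rm rep}(P)]\]
and combine them. First, sifted colimits in ${\rm Ani}({\rm PreLog})$ are computed componentwise: for the coCartesian fibration associated to the Grothendieck construction \eqref{eq:pseudofunctor}, a sifted colimit $(R, P) = \colim_i (R_i, P_i)$ satisfies $P = \colim_i P_i$ and $R \simeq \colim_i R_i$, since the forgetful functor ${\rm Ani}({\rm CRing})_{{\Bbb Z}[P]/} \to {\rm Ani}({\rm CRing})$ preserves connected colimits.

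Next, I would combine the standard facts: ${\rm THH}$ preserves sifted colimits of ${\Bbb E}_\infty$-rings; ${\Bbb S}[-]$ preserves all colimits (it is a left adjoint); $B^{\rm cyc}(-) = S^1 \otimes -$ preserves all colimits (as a copower); and the relative tensor product of ${\Bbb E}_\infty$-rings preserves colimits in each variable. It then remains to show that $B^{\rm rep} \colon {\rm Ani}({\rm CMon}) \to {\rm Ani}({\rm CMon})$ preserves sifted colimits.

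For this, I would exploit that the basepoint inclusion $* \to S^1$ induces a section of the right vertical map in \eqref{eq:reppull}, splitting $B^{\rm cyc}(P)^{\rm gp} \to P^{\rm gp}$ in the $\infty$-category of grouplike animated commutative monoids (equivalently, connective spectra). Since a split surjection of connective spectra decomposes as a direct sum, $B^{\rm cyc}(P)^{\rm gp} \simeq P^{\rm gp} \oplus \Sigma P^{\rm gp}$ with the map to $P^{\rm gp}$ being the first projection. Direct sums of grouplike commutative monoids agree with their products (both within the grouplike subcategory and within ${\rm Ani}({\rm CMon})$, since the inclusion preserves finite products), so pulling back along $P \to P^{\rm gp}$ produces
\[B^{\rm rep}(P) \simeq P \times_{P^{\rm gp}} (P^{\rm gp} \times \Sigma P^{\rm gp}) \simeq P \times \Sigma P^{\rm gp}\]
in ${\rm Ani}({\rm CMon})$. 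Both factors preserve sifted colimits (the second being a composite of the left adjoints $(-)^{\rm gp}$ and $\Sigma$), and finite products in ${\rm Ani}({\rm CMon})$ commute with sifted colimits, yielding the claim.

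The main obstacle is the identification $B^{\rm rep}(P) \simeq P \times \Sigma P^{\rm gp}$; once this simplification is at hand, the remaining assembly of sifted-colimit-preserving operations is routine.
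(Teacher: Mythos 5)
Your proof is correct and follows essentially the same route as the paper's: both reduce, via the fact that ${\rm THH}$, ${\Bbb S}[-]$, $B^{\rm cyc}$, and the relative tensor product all commute with the relevant colimits, to showing that $B^{\rm rep}$ preserves sifted colimits, and both conclude via the splitting of the replete bar construction. The identification $B^{\rm rep}(P) \simeq P \times \Sigma P^{\rm gp}$ that you derive by hand is exactly the decomposition $B^{\rm rep}(-) \simeq - \oplus B(-)^{\rm gp}$ that the paper simply cites from Rognes (\cite[Lemma 3.17]{Rog09}).
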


\begin{proof} The functors $A \mapsto {\rm THH}(A)$ and $M \mapsto {\Bbb S}[B^{\rm cyc}(M)] \cong {\rm THH}({\Bbb S}[M])$ commute (as functors from commutative rings/monoids to ${\Bbb E}_{\infty}$-rings) with sifted colimits. Since ${\rm THH}(A, M) = {\rm THH}(A) \otimes_{{\Bbb S}[B^{\rm cyc}(M)]} {\Bbb S}[B^{\rm rep}(M)]$ and $B^{\rm rep}(M) \cong M \oplus BM^{\rm gp}$ (cf.\ \cite[Lemma 3.17]{Rog09}), we find that log ${\rm THH}$ commutes with sifted colimits as a functor to ${\Bbb E}_{\infty}$-rings. Since the functor ${\rm CAlg}({\rm Sp}) \to {\rm Sp}$ commutes with sifted colimits, we conclude. 
\end{proof}

\begin{remark}In analogy with the description ${\rm THH}(A) = A \otimes_{A \otimes A} A$ of topological Hochschild homology as the derived self-intersections of the diagonal, there is an identification ${\rm THH}(A, M) \simeq A \otimes_{(A \otimes A)^{\rm rep}} A$, where $(A \otimes A)^{\rm rep} \to A$ is (a spectral analog of) the \emph{log diagonal} of Kato--Saito \cite[Section 4]{KS04}. For details, we refer to \cite[Section 13]{Rog09}, \cite[Proposition 5.8]{Lun21}, and \cite[Proposition 1.4]{BLPO23}. 
\end{remark}

\begin{remark} 
While phrased as an invariant of pre-log rings, ${\rm THH}(-, -)$ is invariant under the passage to the associated (derived) log structure \cite[Theorem 4.24]{RSS15}. In particular, this can be used to show that all invariants of pre-log rings considered here are invariant under passing to the associated log structure for integral monoids. Alternatively, this can also be deduced from logification invariance of the cotangent complex (see e.g.\ \cite[Lemma 3.11]{BLPO23}). 
\end{remark}

\subsection{Log topological cyclic homology} As explained in \cite[Construction 3.9]{BLPO23Prism}, the spectrum ${\rm THH}(R, P)$ admits a cyclotomic structure, and we thus obtain natural definitions of ${\rm TC}^-, {\rm TP}$ and ${\rm TC}$ of pre-log rings. Similarly, the linearization ${\rm HH}(A, M) \simeq {\Bbb Z} \otimes_{{\rm THH}({\Bbb Z})} {\rm THH}(A, M)$ recovers the definition of log Hochschild homology of \cite[Definition 3.23]{Rog09} (cf.\ \cite[Corollary 3.5]{BLPO23Prism}), and gives rise to definitions of log ${\rm HC}^-$ and ${\rm HP}$. These theories admit filtrations with graded pieces described in terms of derived log de Rham cohomology \cite[Theorem 1.3]{BLPO23Prism}, in analogy with the main results of \cite{Ant19}. 

\subsection{The log cotangent complex} Let $(R, P)$ be a (discrete) pre-log ring. The \emph{log differentials} is the $R$-module \[\Omega^1_{(R, P)} := \frac{\Omega^1_R \oplus (R \otimes_{{\Bbb Z}} P^{\rm gp})}{d\beta(p) \sim \beta(p) \otimes p},\] from which the \emph{(Gabber) log cotangent complex} ${\Bbb L}_{(R, P)}$ is obtained by left Kan extension from polynomial pre-log rings $(R, P) = ({\Bbb Z}[x_1, \dots, x_n, y_1, \dots, y_m], \langle y_1, \dots, y_m \rangle)$. 

For a general map $(R, P) \to (A, M)$ of animated pre-log rings, we define the relative log cotangent complex ${\Bbb L}_{(A, M) / (R, P)}$ to be the cofiber of the canonical map $A \otimes_R {\Bbb L}_{(R, P)} \to {\Bbb L}_{(A, M)}$ of $A$-modules. We refer to \cite[Section 2.7]{BLPO23Prism} and the references therein for further details, among which we need the cofiber sequence \begin{equation}\label{eq:amazingsequence}A \otimes_{{\Bbb Z}} (M^{\rm gp} / P^{\rm gp}) \to {\Bbb L}_{(A, M) / (R, P)} \to {\Bbb L}_{A / R \otimes_{{\Bbb Z}[P]} {\Bbb Z}[M]}\end{equation} of $A$-modules. 

\subsection{The log quasisyntomic site} Suppose now that $(R, P)$ is a discrete pre-log ring with $P$ \emph{integral}; that is, the canonical map $P \to P^{\rm gp}$ is an injection. We say that $(R, P)$ is \emph{log quasisyntomic} if $R$ has bounded $p^\infty$-torsion and the log cotangent complex ${\Bbb L}_{(R, P) / {\Bbb Z}_p}$ has $p$-complete (homological) ${\rm Tor}$-amplitude in $[0, 1]$. A log quasisyntomic pre-log ring $(S, Q)$ is \emph{log quasiregular semiperfectoid} if $S$ admits a map from a perfectoid ring and the commutative monoids $(S, \cdot)$ and $Q$ are \emph{semiperfect}; that is, their $p$-power maps are surjective. 

As explained in \cite[Section 4]{BLPO23Prism} (see also \cite[Section 3]{KY23}), the category ${\rm lQRSPerfd}$ admits the structure of a site by mimicking the approach of \cite{BMS19} with the above definitions, and there is an analog of the unfolding equivalence \cite[Theorem 4.30]{BLPO23Prism}. For any pre-log ring $(R, P)$, we write ${\rm lQSyn}_{(R, P)}$ for the category of log quasisyntomic $(R, P)$-algebras; that is, $(R, P)$-algebras $(A, M)$ whose underlying pre-log ring is log quasisyntomic. 

\subsection{Nygaard-complete log prismatic cohomology} If $(A, M)$ is log quasisyntomic, we set \[\widehat{\Prism}_{(A, M)} := R\Gamma_{\rm lqsyn}((A, M), \pi_0{\rm TC}^-((-, -) ; {\Bbb Z}_p)).\] By construction, this ${\Bbb E}_{\infty}$-algebra in $D({\Bbb Z}_p)$ is complete with respect to the \emph{Nygaard filtration}; the abutment filtration of the homotopy fixed point spectral sequence computing ${\rm TC}^- := {\rm THH}^{hS^1}$. The results of \cite{BLPO23Prism} and \cite{BLMP24} show that $\widehat{\Prism}_{(A, M)}$ participates in the expected comparison isomorphisms, such as analogs of the de Rham and crystalline comparison.

\section{Saturated descent via the infinite root stack} \label{sec:satdescent} We now recall some material on saturated descent from \cite{BLMP24}, and explain its relation to the infinite root stack. In particular, we prove that the log cotangent complex is the cotangent complex of the infinite root stack in most cases of interest (Lemma \ref{lem:diffroot}). For expositional reasons, we reformulate the relevant proofs from \cite{BLMP24} to our context; consequently, the present exposition is independent of \emph{loc.\ cit.} 

\subsection{The saturated \v{C}ech nerve} Let $\varphi \colon P \to M$ be a morphism of saturated monoids. We say that $\varphi$ is \emph{Kummer} if it is injective and for every $m \in M$, it comes from $\varphi$ up to some positive integer multiple: that is, there is $p \in P$ such that $\varphi(p) = n \cdot m$ for some non-negative integer $n$. 

The inclusion of saturated monoids into all commutative monoids admits a left adjoint. Given a Kummer morphism $\varphi \colon P \to M$, we may consider its \v{C}ech nerve \begin{equation}\label{eq:satcechnerve}\begin{tikzcd}P \ar{r} & M \ar[shift left = 0.5]{r} \ar[shift right = 0.5]{r} \!\! &\! M \oplus_P^{\rm sat} M  \ar{r} \ar[shift left = 1]{r} \ar[shift right = 1]{r} \!\!&\! M \oplus_P^{\rm sat} M \oplus_P^{\rm sat} M \cdots \end{tikzcd}\end{equation} in the category of saturated monoids. The Kummer hypothesis ensures that each of the saturated coproducts $M^{\oplus_P ^{\rm sat}\bullet + 1}$ may be described in terms of the exactification/repletion of the multiplication map $M^{\oplus_P \bullet + 1} \to M$. More explicitly, this means that there are isomorphisms $M^{\oplus_P ^{\rm sat}\bullet + 1} \cong M \oplus (M^{\rm gp}/P^{\rm gp})^{\oplus \bullet}$, where e.g.\ the morphisms $M \to M \oplus (M^{\rm gp}/P^{\rm gp})$ send $m$ to $(m, 1)$ and $(m, [m])$ respectively. For details, we refer to \cite[Section 4]{BLMP24} and Nizio{\l} \cite{Niz08}. 

\begin{example}\label{ex:nperf} Consider the morphism ${\Bbb N} \to {\Bbb N}_{\rm perf} \cong {\Bbb N}[\frac{1}{p}]$. In this case, the saturated \v{C}ech nerve  \begin{equation}\label{eq:pprufer}\begin{tikzcd}{\Bbb N} \ar{r} & {\Bbb N}[\frac{1}{p}] \ar[shift left = 0.5]{r} \ar[shift right = 0.5]{r} \!\! &\! {\Bbb N}[\frac{1}{p}] \oplus {\Bbb Z}[\frac{1}{p}]/{\Bbb Z}  \ar{r} \ar[shift left = 1]{r} \ar[shift right = 1.2]{r} \!\!&\! {\Bbb N}[\frac{1}{p}] \oplus {\Bbb Z}[\frac{1}{p}]/{\Bbb Z} \oplus {\Bbb Z}[\frac{1}{p}]/{\Bbb Z} \cdots \end{tikzcd}\end{equation} involves the $p$-Pr\"ufer group ${\Bbb Z}[\frac{1}{p}]/{\Bbb Z}$. 
\end{example}

For a $p$-complete ground ring $R$ and a pre-log ring $(A, P)$, we associate to the saturated \v{C}ech nerve \eqref{eq:satcechnerve} the diagram \begin{equation}\label{eq:satcechnervering}\begin{tikzcd} A \widehat{\otimes}_{R \langle P \rangle} R\langle M \rangle \ar[shift left = 0.5]{r} \ar[shift right = 0.5]{r} \!\! &\! A \widehat{\otimes}_{R\langle P \rangle} R\langle M  \oplus_P^{\rm sat} M\rangle  \ar{r} \ar[shift left = 1]{r} \ar[shift right = 1]{r} \!\!&\!  A \widehat{\otimes}_{R\langle P \rangle} R\langle M  \oplus_P^{\rm sat} M \oplus_P^{\rm sat} M\rangle\cdots \end{tikzcd}\end{equation}

\subsection{A convenient subcategory of descendable objects} We now isolate a subcategory of ${\rm lQSyn}_R$ for which $p$-complete log invariants will satisfy descent along the diagram \eqref{eq:pprufer}. 

\begin{definition} We write ${\rm lQSyn}_R^{\rm qsyn}$ for the full subcategory of ${\rm lQSyn}_R$ spanned by log quasisyntomic $(A, M)$ that further satisfy 
\begin{enumerate}
\item $M$ is fine, saturated, and $p$-torsionfree; and
\item $A \widehat{\otimes}_{R\langle M \rangle} R \langle M_{\rm perf} \rangle \in {\rm QSyn}_R$.  
\end{enumerate}
\end{definition}

The second condition implies in particular that the (implicitly derived) tensor product $A \otimes_{R[M]} R[M_{\rm perf}]$ is $p$-completely discrete. Observe also that ${\rm lQSyn}_R^{\rm qsyn}$ contains all $p$-complete polynomial pre-log rings $(R\langle x_1, \dots, x_n, y_1, \dots, y_m \rangle, \langle y_1, \dots, y_m \rangle)$.

One crucial consequence of the hypothesis on $M$ is that $M \to M_{\rm perf}$ is Kummer, so that \cite[Lemma 3.28]{Niz08} applies to give a homotopy equivalence \begin{equation}\label{eq:cosimpeq}{\Bbb Z}[M] \xrightarrow{\simeq} {\Bbb Z}[M_{\rm perf}^{\oplus_M^{\rm sat} \bullet + 1}]\end{equation} of cosimplicial ${\Bbb Z}[M]$-modules. Moreover, since $M_{\rm perf}^{\oplus_M^{\rm sat} \bullet + 1} \cong M_{\rm perf} \oplus (M_{\rm perf}^{\rm gp} / M^{\rm gp})^{\oplus \bullet}$ is levelwise semiperfect, the canonical map \begin{equation}\label{eq:semiperfectmon}{\Bbb L}_{{\Bbb Z}[M_{\rm perf}^{\oplus_M^{\rm sat} \bullet + 1 }]} \xrightarrow{} {\Bbb L}_{({\Bbb Z}[M_{\rm perf}^{\oplus_M^{\rm sat} \bullet + 1 }], M_{\rm perf}^{\oplus_M^{\rm sat} \bullet + 1})}\end{equation} is an equivalence after $p$-completion, as one observes by reducing mod $p$. 

\subsection{Saturated descent for the cotangent complex} We now establish the following preliminary result, which can be seen as a special case of \cite[Theorem 4.12]{BLMP24}:

\begin{proposition}\label{prop:cotcxsatdesc}The functor \[{\rm lQSyn}_R^{\rm qsyn} \to {\rm Mod}_R, \quad (A, M) \mapsto {\Bbb L}_{(A, M) / R}\] satisfies \emph{saturated descent}; that is, both maps \[{\Bbb L}_{(A, M) / R} \xrightarrow{} \lim_\Delta {\Bbb L}_{(A \widehat{\otimes}_{R\langle M \rangle} R \langle M_{\rm perf}^{\oplus_M^{\rm sat} \bullet + 1} \rangle, M_{\rm perf}^{\oplus_M \bullet + 1}) / R} \xleftarrow{}  \lim_\Delta {\Bbb L}_{A \widehat{\otimes}_{R\langle M \rangle} R \langle M_{\rm perf}^{\oplus_M^{\rm sat} \bullet + 1} \rangle / R}\] are equivalences after $p$-completion. 
\end{proposition}

For the convenience of the reader, we spell out the proof in this setting. Let us begin with the following elementary observation: 

\begin{lemma}\label{lem:cofibervanishes} For $(A, M) \in {\rm lQSyn}_R^{\rm qsyn}$, the totalization of \[{\Bbb L}_{(A \otimes_{{\Bbb Z}[M]} {\Bbb Z} [M_{\rm perf}^{\oplus_M^{\rm sat} \bullet + 1} ], M_{\rm perf}^{\oplus_M^{\rm sat} \bullet + 1}) / (A, M)}\] vanishes. 
\end{lemma}

\begin{proof} By \eqref{eq:amazingsequence}, there is a cofiber sequence \[\begin{tikzcd}[row sep = small](A \otimes_{{\Bbb Z}[M]} {\Bbb Z}[M_{\rm perf}^{\oplus_M^{\rm sat} \bullet + 1}]) \otimes_{{\Bbb Z}}^{\Bbb L} (M_{\rm perf}^{\rm gp}/M^{\rm gp})^{\bullet + 1} \ar{d} \\ {\Bbb L}_{(A, M) / R} \ar{d} \\ {\Bbb L}_{A \otimes_{{\Bbb Z}[M]} {\Bbb Z}[M_{\rm perf}^{\oplus_M^{\rm sat} \bullet + 1}] / A \otimes_{{\Bbb Z}[M]} {\Bbb Z}[M_{\rm perf}^{\oplus_M^{\rm sat} \bullet + 1}]}\end{tikzcd}\] of $A$-modules. The bottom term obviously vanishes, while the upper term vanishes as $(M^{\rm gp}_{\rm perf} / M^{\rm gp})^{\oplus \bullet + 1}$ is the \v{C}ech nerve associated to $0 \to M_{\rm perf}^{\rm gp} / M^{\rm gp}$. 
\end{proof}

\begin{proof}[Proof of Proposition \ref{prop:cotcxsatdesc}] We apply the transitivity property of the log cotangent complex to the composition $R \to (A, M) \to (A \otimes_{{\Bbb Z}[M]} {\Bbb Z}[M_{\rm perf}^{\oplus_M^{\rm sat} \bullet + 1}], M_{\rm perf}^{\oplus_M^{\rm sat} \bullet + 1})$ to obtain the cofiber sequence \[\begin{tikzcd}[row sep = small]{\Bbb Z}[M_{\rm perf}^{\oplus_M^{\rm sat} \bullet + 1}] \otimes_{{\Bbb Z}[M]} {\Bbb L}_{(A, M) / R} \ar{d}  \\ {\Bbb L}_{(A \otimes_{{\Bbb Z}[M]} {\Bbb Z}[M_{\rm perf}^{\oplus_M^{\rm sat} \bullet + 1}], M_{\rm perf}^{\oplus_M^{\rm sat} \bullet + 1}) / R} \ar{d} \\ {\Bbb L}_{(A \otimes_{{\Bbb Z}[M]} {\Bbb Z} [M_{\rm perf}^{\oplus_M^{\rm sat} \bullet + 1} ], M_{\rm perf}^{\oplus_M^{\rm sat} \bullet + 1}) / (A, M)}\end{tikzcd}\] of cosimplicial $A \otimes_{{\Bbb Z}[M]} {\Bbb Z} [M_{\rm perf}^{\oplus_M^{\rm sat} \bullet + 1}]$-modules. The totalization of the cofiber term vanishes by Lemma \ref{lem:cofibervanishes}. The map ${\Bbb Z}[M] \to {\Bbb Z}[M_{\rm perf}^{\oplus^{\rm sat}_M \bullet + 1}]$ is a cosimplicial homotopy equivalence by  \eqref{eq:cosimpeq}, so that the fiber term is equivalent to ${\Bbb L}_{(A, M) / R}$. This establishes the first equivalence. The second follows from \eqref{eq:semiperfectmon}. 
\end{proof}

\subsection{Saturated descent for log topological cyclic homology} We now explain how the results of \cites{BLPO23Prism, BLMP24} apply to compute log ${\rm TC}$ in terms of ordinary ${\rm TC}$ of the diagram \eqref{eq:satcechnervering}. The next result states that log ${\rm TC}$ can be computed by descent along the diagram \eqref{eq:satcechnervering} with respect to the Kummer map $M \to M_{\rm perf}$:

\begin{proposition}\label{prop:logtcsatdesc} For $(A, M) \in {\rm lQSyn}^{{\rm qsyn}}$, the canonical maps \[\begin{tikzcd}[row sep = tiny]{\rm TC}((A, M) ; {\Bbb Z}_p) \ar{d}  
\\ \lim_\Delta {\rm TC}((A \otimes_{{\Bbb Z}[M]} {\Bbb Z}[M_{\rm perf}^{\oplus_M^{\rm sat} \bullet + 1}], M_{\rm perf}^{\oplus_M^{\rm sat} \bullet + 1}) ; {\Bbb Z}_p) \\  \lim_\Delta {\rm TC}(A \otimes_{{\Bbb Z}[M]} {\Bbb Z}[M_{\rm perf}^{\oplus_M^{\rm sat} \bullet + 1}] ; {\Bbb Z}_p) \ar{u} \end{tikzcd}\]
are equivalences. 
\end{proposition}

The proof of Proposition \ref{prop:logtcsatdesc} requires some preparation. Let us first establish the analogous result for ${\rm THH}$:

\begin{lemma}\label{lem:thhsatdesc} For $(A, M) \in {\rm lQSyn}^{\rm qsyn}$, the canonical maps \[\begin{tikzcd}[row sep = tiny]{\rm THH}((A, M) ; {\Bbb Z}_p) \ar{d}  
\\ \lim_\Delta {\rm THH}((A \otimes_{{\Bbb Z}[M]} {\Bbb Z}[M_{\rm perf}^{\oplus_M^{\rm sat} \bullet + 1}], M_{\rm perf}^{\oplus_M^{\rm sat} \bullet + 1}) ; {\Bbb Z}_p) \\  \lim_\Delta {\rm THH}(A \otimes_{{\Bbb Z}[M]} {\Bbb Z}[M_{\rm perf}^{\oplus_M^{\rm sat} \bullet + 1}] ; {\Bbb Z}_p) \ar{u} \end{tikzcd}\] are equivalences.
\end{lemma}

\begin{proof} Inspired by \cite[Proof of Corollary 3.4]{BMS19}, we consider the diagram \[\begin{tikzcd}[row sep = tiny]\lim_n\tau_{\le n}{\rm THH}({\Bbb Z}) \otimes_{{\rm THH}({\Bbb Z})} {\rm THH}((A, M) ; {\Bbb Z}_p) \ar{d}  
\\ \lim_\Delta  \lim_n \tau_{\le n}{\rm THH}({\Bbb Z}) \otimes_{{\rm THH}({\Bbb Z})} {\rm THH}((A \otimes_{{\Bbb Z}[M]} {\Bbb Z}[M_{\rm perf}^{\oplus_M^{\rm sat} \bullet + 1}], M_{\rm perf}^{\oplus_M^{\rm sat} \bullet + 1}) ; {\Bbb Z}_p) \\  \lim_\Delta \lim_n \tau_{\le n}{\rm THH}({\Bbb Z}) \otimes_{{\rm THH}({\Bbb Z})} {\rm THH}(A \otimes_{{\Bbb Z}[M]} {\Bbb Z}[M_{\rm perf}^{\oplus_M^{\rm sat} \bullet + 1}] ; {\Bbb Z}_p) \ar{u} \end{tikzcd}\] involving weak Postnikov towers \cite[Lemma 3.3]{BMS19}, which is equivalent to the diagram of the lemma. By induction, this allows us to reduce to the analogous statement for ${\rm THH}((-, -) ; {\Bbb Z}_p) \otimes_{{\rm THH}({\Bbb Z})} {\Bbb Z} \simeq {\rm HH}((-, -) ; {\Bbb Z}_p)$ \cite[Corollary 3.4]{BLPO23Prism} (it is not necessary to $p$-complete the tensor product as the homotopy groups $\pi_n{\rm THH}({\Bbb Z})$ are finite for $n > 0$). This holds, in turn, by the log HKR-filtration \cite[Theorem 1.1]{BLPO23} and Proposition \ref{prop:cotcxsatdesc}.
\end{proof} 

\begin{lemma}\label{lem:thhtcpsatdesc}  The functor \[{\rm THH}((-, -) ; {\Bbb Z}_p)^{tC_p} \colon {\rm lQSyn}^{\rm qsyn} \to {\rm Sp}_p\] satisfies saturated descent.
\end{lemma}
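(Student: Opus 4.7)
The plan is to adapt the strategy of the proof of Lemma~\ref{lem:thhsatdesc} to the Tate-fixed-point setting. The key point is that, although $(-)^{tC_p}$ does not preserve arbitrary limits, the descent statement is witnessed at the level of filtered objects whose associated graded pieces can be expressed in terms of invariants that are already known to satisfy saturated descent.

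First, I would consider the canonical comparison morphism
\[
\THH((A,M); \Z_p)^{tC_p} \longrightarrow \lim_\Delta \THH\bigl((A \widehat{\otimes}_{R\langle M\rangle} R\langle M_{\mathrm{perf}}^{\oplus_M^{\mathrm{sat}} \bullet+1}\rangle, M_{\mathrm{perf}}^{\oplus_M^{\mathrm{sat}} \bullet+1}); \Z_p\bigr)^{tC_p}
\]
of filtered spectra. Both sides should be equipped with the complete filtration on $\THH((-,-); \Z_p)^{tC_p}$ coming from \cite[Theorem 1.8]{BLPO23Prism} (the Tate counterpart of the filtration used in Lemma~\ref{lem:thhsatdesc}); on the right-hand side, one takes the termwise limit of these filtrations, which remains complete because completeness is a limit condition, and is constant in sufficiently negative filtration degrees. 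To show the comparison is an equivalence, it then suffices to check on associated graded pieces, where the claim becomes saturated descent for appropriate Tate twists/shifts of Nygaard-completed log prismatic cohomology. By the same argument used in the proof of Lemma~\ref{lem:thhsatdesc}, this reduces via \cite[Proposition 7.6]{BLPO23Prism} to saturated descent for the log cotangent complex, which is exactly \cite[Theorem 4.12]{BLMP24}.

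Second, following the second step of the proof of Lemma~\ref{lem:thhsatdesc}, I would show that the natural map
\[
\lim_\Delta \THH\bigl(A \widehat{\otimes}_{R\langle M\rangle} R\langle M_{\mathrm{perf}}^{\oplus_M^{\mathrm{sat}} \bullet+1}\rangle; \Z_p\bigr)^{tC_p} \longrightarrow \lim_\Delta \THH\bigl((A \widehat{\otimes}_{R\langle M\rangle} R\langle M_{\mathrm{perf}}^{\oplus_M^{\mathrm{sat}} \bullet+1}\rangle, M_{\mathrm{perf}}^{\oplus_M^{\mathrm{sat}} \bullet+1}); \Z_p\bigr)^{tC_p}
\]
comparing the non-log and log versions on the perfection side is an equivalence. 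Again using the complete filtration on each level of the cosimplicial object, this reduces to comparison of graded pieces, hence ultimately to the analogous statement for the log cotangent complex, which is \cite[Proposition 4.7]{BLMP24} (the morphism being an equivalence there reflects the fact that $M_{\mathrm{perf}}$-type log structures are ``perfectoid-like'' at the cotangent-complex level).

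The main obstacle I expect is the one that motivates introducing the filtration in the first place: $(-)^{tC_p}$ does not commute with totalizations of cosimplicial objects in general, and a naive attempt to deduce the lemma from Lemma~\ref{lem:thhsatdesc} by applying $(-)^{tC_p}$ termwise does not work. The resolution, as sketched above, is that the relevant filtrations on both sides are complete, so that descent is automatic once checked on graded pieces, and these graded pieces are built from Nygaard-completed log prismatic cohomology and the log cotangent complex, where saturated descent is already established in \cite{BLMP24}.
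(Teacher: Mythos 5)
Your proposal is correct and follows essentially the same route as the paper, whose proof of this lemma is a one-line remark that ``variants of the arguments used to reduce Lemma \ref{lem:thhsatdesc} to saturated descent for the cotangent complex are also applicable in this case'' (citing \cite[Remark 3.5]{BMS19}). You have simply spelled out those variants explicitly: the complete filtration on ${\rm THH}(-)^{tC_p}$ from \cite[Theorem 1.8]{BLPO23Prism}, the reduction to graded pieces and thence to the log cotangent complex via \cite[Theorem 4.12]{BLMP24} and \cite[Proposition 4.7]{BLMP24}, which is exactly the intended argument.
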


\begin{proof}
Variants of the arguments used to reduce Lemma \ref{lem:thhsatdesc} to saturated descent for the cotangent complex are also applicable in this case, cf.\ \cite[Remark 3.5]{BMS19}.  
\end{proof}

\begin{corollary}\label{cor:thhsatdesccyc} The functor \[{\rm THH}((-, -) ; {\Bbb Z}_p) \colon {\rm lQSyn}^{\rm qsyn} \to {\rm CycSp}_p\] satisfies saturated descent.
\end{corollary}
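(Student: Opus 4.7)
The plan is to deduce the statement from Lemmas \ref{lem:thhsatdesc} and \ref{lem:thhtcpsatdesc} by using a convenient presentation of the $\infty$-category of $p$-cyclotomic spectra as a limit in $\infty$-categories. Recall from \cite[Proposition II.1.9]{NS18} that ${\rm CycSp}_p$ may be described as the lax equalizer of the pair of functors $\mathrm{id}, (-)^{tC_p} \colon {\rm Sp}^{BS^1} \to {\rm Sp}^{BS^1}$, where we use the canonical residual $S^1$-action on the Tate construction $(-)^{tC_p}$. In particular, the forgetful functor ${\rm CycSp}_p \to {\rm Sp}^{BS^1}$ creates small limits, and a diagram in ${\rm CycSp}_p$ is a limit diagram if and only if both its image under the forgetful functor to ${\rm Sp}^{BS^1}$ and the induced diagram of cyclotomic Frobenii in ${\rm Sp}^{BS^1}$ (viewed via the post-composition $(-)^{tC_p}$) are limit diagrams.

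First I would upgrade Lemma \ref{lem:thhsatdesc} to the statement that the canonical map
\[
{\rm THH}((A, M) ; {\Bbb Z}_p) \longrightarrow \lim_{\Delta} {\rm THH}(A \widehat{\otimes}_{R \langle M \rangle} R \langle M_{\rm perf}^{\oplus_M^{\rm sat} \bullet + 1}\rangle ; {\Bbb Z}_p)
\]
is an equivalence in ${\rm Sp}^{BS^1}_p$. This is automatic: the proof of Lemma \ref{lem:thhsatdesc} produces an equivalence of underlying spectra, but every map in sight is by construction a morphism of $S^1$-equivariant spectra, so the equivalence refines to one in ${\rm Sp}^{BS^1}_p$ (equivalences in ${\rm Sp}^{BS^1}_p$ are detected on underlying spectra). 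The same remark applies to Lemma \ref{lem:thhtcpsatdesc}, since the Tate construction $(-)^{tC_p}$ carries the residual $S^1$-action coming from $S^1/C_p \simeq S^1$.

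Finally, the cyclotomic Frobenius $\varphi_p \colon {\rm THH}(-, -) \to {\rm THH}(-, -)^{tC_p}$ is natural in the pre-log ring, so it gives rise to a map of cosimplicial objects in ${\rm Sp}^{BS^1}_p$ compatible with the descent maps produced above. Since taking limits in ${\rm Sp}^{BS^1}_p$ commutes with itself, and the two inputs of the lax equalizer have already been shown to satisfy saturated descent, we conclude that the canonical morphism
\[
{\rm THH}((A, M) ; {\Bbb Z}_p) \longrightarrow \lim_{\Delta} {\rm THH}(A \widehat{\otimes}_{R \langle M \rangle} R \langle M_{\rm perf}^{\oplus_M^{\rm sat} \bullet + 1}\rangle ; {\Bbb Z}_p)
\]
is an equivalence in ${\rm CycSp}_p$. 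The main (mild) obstacle is bookkeeping the $S^1$-equivariance and the naturality of $\varphi_p$ throughout the argument, but since the constructions in \cite{BLPO23Prism, BLMP24} producing the descent equivalences for ${\rm THH}$ and ${\rm THH}^{tC_p}$ are already compatible with the relevant equivariant structures, no new ingredients beyond the presentation of ${\rm CycSp}_p$ as a lax equalizer are required.
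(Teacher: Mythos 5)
Your argument is correct and is essentially the paper's own proof: the paper likewise invokes the lax-equalizer description of ${\rm CycSp}_p$ from \cite{NS18} to reduce the statement to Lemmas \ref{lem:thhsatdesc} and \ref{lem:thhtcpsatdesc}, which control the two legs of the equalizer. Your version merely spells out the bookkeeping of $S^1$-equivariance and the naturality of the cyclotomic Frobenius that the paper leaves implicit.
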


\begin{proof} By \cite[Definition II.1.6]{NS18}, this follows from Lemmas \ref{lem:thhsatdesc} and \ref{lem:thhtcpsatdesc}.
\end{proof}

\begin{proof}[Proof of Proposition \ref{prop:logtcsatdesc}] This follows from the description of ${\rm TC}((A, M) ; {\Bbb Z}_p)$ as \[{\rm TC}((A, M) ; {\Bbb Z}_p) := {\rm Map}_{{\rm CycSp}_p}({\Bbb S}^{\rm triv}, {\rm THH}((A, M) ; {\Bbb Z}_p))\] and Corollary \ref{cor:thhsatdesccyc}.
\end{proof}

\subsection{The infinite root stack}\label{sec:inftyroot} We give a very brief recollection of the construction of the ($p$-typical variant of the) infinite root stack of Talpo--Vistoli \cite{TV18}. The only case of interest to us is that covered by \cite[Proposition 3.10]{TV18}, which we discuss in Example \ref{ex:onevariable} and Proposition \ref{prop:fscechnerve}. 

Let $({\cal X}, {\cal M})$ be a log formal scheme (see Koshikawa \cite[Appendix A]{Kos22} for material on log formal schemes). The \emph{infinite root stack} $\sqrt[\infty]{({\cal X}, {\cal M})}$ is an fpqc-stack over ${\rm Aff}_{\cal X}$ fibered in groupoids, determined by the following property: Given a solid arrow diagram \[\begin{tikzcd}\vspace{10 mm} & \sqrt[\infty]{({\cal X}, {\cal M})} \ar{d} \\ {\rm Spf}(A) \ar{r}{f} \ar[dashed]{ur} & {\cal X},\end{tikzcd}\] specifying a lift corresponds precisely to specifying a symmetric monoidal functor $f^*({\cal M}/{\cal O}_{\cal X}^\times)_{\rm perf} \to [{\cal O}_{{\rm Spf}(A)} / {\cal O}_{{\rm Spf}(A)}^\times]$ and an isomorphism relating its pre-composition with $f^*({\cal M}/{\cal O}_{\cal X}^\times) \to f^*({\cal M}/{\cal O}_{\cal X}^\times)_{\rm perf}$ (denoted $f^*({\cal M}/{\cal O}_{\cal X}^\times)_{\infty}$ in \cite{DY24}) and the composite \[f^*({\cal M}/{\cal O}_{\cal X}^\times) \to [{\cal O}_{\cal X} / {\cal O}_{\cal X}^\times] \to [{\cal O}_{{\rm Spf}(A)} / {\cal O}_{{\rm Spf}(A)}^\times] \simeq {\rm Div}_{{\rm Spf}(A)}\] induced by the log structure and $f$. See \cite[Definition 3.3 and Lemma 3.12]{TV18}  and Diao--Yao \cite[Section 13.1]{DY24}.

\begin{remark} We need the assignment $A \mapsto {\rm Div}_{{\rm Spf}(A)}$ to satisfy quasisyntomic descent. Here, ${\rm Div}_{{\rm Spf}(A)}$ is the category of pairs $(L, s)$ with $L$ a line bundle and $s$ a global section. To see this, we consider for a quasisyntomic cover $A \to B$ the diagram \[\begin{tikzcd}{\rm Div}_{{\rm Spf}(A)} \ar{r} \ar{d} & \lim_\Delta {\rm Div}_{{\rm Spf}(B^{\otimes_{A} \bullet})} \ar{d} \\ {\rm lim}_n {\rm Div}_{{\rm Spec}(A/p^n)} \ar{r} & {\rm lim}_n \lim_\Delta {\rm Div}_{{\rm Spec}({B/p^n}^{\otimes_{A/p^n} \bullet})}.\end{tikzcd}\] Here, the vertical maps are equivalences by Grothendieck's existence theorem, while the bottom map is an equivalence by fpqc-descent. Hence the top map is an equivalence, as desired (note that quasisyntomic descent for $\mathrm{Pic}(-)$ is also recorded in \cite[Lemma 9.6]{BS22}).
\end{remark}

\subsection{The saturated \v{C}ech nerve computes the cohomology of the infinite root stack}\label{subsec:satcechroot} We now explain how the saturated \v{C}ech nerve naturally arises from the infinite root stack in cases of interest. Let us first consider the case of a polynomial ring in one variable: 

\begin{example}\label{ex:onevariable}  Let us explain how to recover the saturated descent diagram associated to ${\Bbb N} \to {\Bbb N}_{\rm perf} \cong {\Bbb N}[\frac{1}{p}]$ (cf.\ Example \ref{ex:nperf}) from the infinite root stack $\sqrt[\infty]{{\rm Spf}({\Bbb Z}_p\langle t \rangle, \langle t \rangle)}$. Applying $p$-complete monoid rings to \eqref{eq:pprufer} in this case, we obtain \[\begin{tikzcd}[column sep = tiny]{\Bbb Z}_p\langle t^{1/p^{\infty}} \rangle \ar[shift left = 0.5]{r} \ar[shift right = 0.5]{r} \!\! &\! {\Bbb Z}_p\langle t^{1/p^{\infty}} \rangle \widehat{\otimes}_{{\Bbb Z}_p} {\Bbb Z}_p \langle t^{\pm 1/p^{\infty}}/t^{\pm 1} \rangle  \ar{r} \ar[shift left = 1]{r} \ar[shift right = 1.2]{r} \!\!&\! {\Bbb Z}_p\langle t^{1/p^{\infty}} \rangle \widehat{\otimes}_{{\Bbb Z}_p} {\Bbb Z}_p \langle t^{\pm 1/p^{\infty}}/t^{\pm 1} \rangle^{\widehat{\otimes}_{{\Bbb Z}_p} 2}\cdots \end{tikzcd}\] In other words, the $n$th stage of the saturated descent diagram involves one copy of ${\Bbb Z}_p\langle {\Bbb N}[\frac{1}{p}] \rangle$ and $n$ copies of the $p$-completed group ring ${\Bbb Z}_p \langle {\Bbb Z}[\frac{1}{p}]/{\Bbb Z} \rangle$ of the $p$-Pr\"ufer group ${\Bbb Z}[\frac{1}{p}]/{\Bbb Z}$. 

We now consider the infinite root stack $\sqrt[\infty]{{\rm Spf}({\Bbb Z}_p\langle t \rangle, \langle t \rangle)}$. We may use \cite[Corollary 3.13]{TV18} to identify \[\sqrt[\infty]{{\rm Spf}({\Bbb Z}_p\langle t \rangle, \langle t \rangle)} \simeq [{\rm Spf}({\Bbb Z}_p\langle t^{1/p^{\infty}} \rangle) / \mu_{p^\infty}].\]  We may thus compute the cohomology of the infinite root stack by means of the \v{C}ech nerve of the morphism \[{\rm Spf}({\Bbb Z}_p\langle t^{1/p^{\infty}} \rangle) \to [{\rm Spf}({\Bbb Z}_p\langle t^{1/p^{\infty}} \rangle) / \mu_{p^\infty}].\] From this, we see that (${\rm Spf}$ of the $p$-complete monoid ring of) the diagram \eqref{ex:nperf} recovers the usual description of the \v{C}ech nerve of quotient stacks $X \to [X / G]$. 
\end{example}

\noindent The following generalizes Example \ref{ex:onevariable} to all cases of interest to us:

\begin{proposition}\label{prop:fscechnerve} Let $(A, M) \in {\rm lQSyn}_R^{\rm qsyn}$. The saturated \v{C}ech nerve \eqref{eq:satcechnervering} associated to the Kummer map $M \to M_{\rm perf}$ is isomorphic to the \v{C}ech nerve of the  quasisyntomic cover 
\begin{equation}\label{eq:fromspf}{\rm Spf}(A \widehat{\otimes}_{R\langle M \rangle} R\langle M_{\rm perf} \rangle) \to \sqrt[\infty]{{\rm Spf}(A, M)}\end{equation} after applying ${\rm Spf}(-)$.  
\end{proposition}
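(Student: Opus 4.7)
Proof plan. The strategy is to realize the infinite root stack as an explicit quotient stack via Talpo--Vistoli's presentation, and then to recognize the saturated \v{C}ech nerve as the bar construction of that quotient. This extends Example \ref{ex:onevariable} to arbitrary fine saturated $M$.

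First, under the fine saturated hypothesis on $M$, \cite[Proposition 3.10]{TV18} (in its $p$-complete incarnation) provides a presentation of the form
$$\sqrt[\infty]{{\rm Spf}(A, M)} \simeq [\, {\rm Spf}(A \widehat{\otimes}_{R\langle M \rangle} R\langle M_{\rm perf} \rangle) \,/\, {\rm Spf}(R\langle M_{\rm perf}^{\rm gp}/M^{\rm gp} \rangle)\,],$$
where the $p$-complete diagonalizable group scheme on the right acts on the cover through the natural $M_{\rm perf}^{\rm gp}/M^{\rm gp}$-grading on $R\langle M_{\rm perf}\rangle$ inherited from its $R\langle M^{\rm gp}\rangle$-module structure. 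The $p$-complete flatness hypothesis built into the definition of ${\rm lQSyn}_R^{\rm fsh}$ ensures that the cover \eqref{eq:fromspf} is quasisyntomic and that the relevant $p$-completed tensor products remain discrete.

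Second, I would compute the \v{C}ech nerve of an arbitrary cover of the form $X \to [X/G]$: it is the usual bar construction $X \times G^{\bullet}$, with face and degeneracy maps built from the unit, multiplication, and action of $G$. In our situation this means the \v{C}ech nerve of \eqref{eq:fromspf} is, in simplicial degree $n$, the formal spectrum of
$$A \widehat{\otimes}_{R\langle M \rangle} R\langle M_{\rm perf}\rangle \,\widehat{\otimes}_R\, R\langle M_{\rm perf}^{\rm gp}/M^{\rm gp}\rangle^{\widehat{\otimes}_R n}.$$
On the other hand, the isomorphism $M_{\rm perf}^{\oplus_M^{\rm sat}(n+1)} \cong M_{\rm perf} \oplus (M_{\rm perf}^{\rm gp}/M^{\rm gp})^{\oplus n}$ recalled in Section \ref{subsec:satcechroot}, combined with the fact that monoid rings convert coproducts of monoids into tensor products of rings, shows that the $n$th term of the saturated \v{C}ech nerve \eqref{eq:satcechnervering} agrees with the ring displayed above.

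Third, one must match the simplicial structure. The two basic coface maps $m \mapsto (m, 1)$ and $m \mapsto (m, [m])$ correspond, respectively, to the projection $X \times G \to X$ and the action map $X \times G \to X$, while higher faces and degeneracies correspond to the group law on $M_{\rm perf}^{\rm gp}/M^{\rm gp}$ and to its unit. The main technical obstacle I expect is precisely this bookkeeping verification, namely that the combinatorial description of the saturated \v{C}ech nerve recalled from \cite{BLMP24} and \cite{Niz08} agrees with the standard bar construction of $[X/G]$ on the nose at every simplicial level. Once this is established on the one-variable model of Example \ref{ex:onevariable} and then propagated by multi-linearity in a generating set of $M$, the identification on all higher strata follows from functoriality in the simplicial operators.
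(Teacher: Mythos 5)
Your proposal is correct and follows essentially the same route as the paper: both invoke the Talpo--Vistoli presentation of $\sqrt[\infty]{{\rm Spf}(A,M)}$ as the quotient of ${\rm Spf}(A \widehat{\otimes}_{R\langle M \rangle} R\langle M_{\rm perf}\rangle)$ by the diagonalizable ($p$-completed) group scheme ${\rm Spf}(R\langle M_{\rm perf}^{\rm gp}/M^{\rm gp}\rangle)$ --- which the paper calls $\mu_{p^\infty}(M)$, the Cartier dual of $M_{\rm perf}^{\rm gp}/M^{\rm gp}$ --- and then identify the bar-construction \v{C}ech nerve of $X \to [X/G]$ with the saturated \v{C}ech nerve via $M_{\rm perf}^{\oplus_M^{\rm sat}(\bullet+1)} \cong M_{\rm perf}\oplus(M_{\rm perf}^{\rm gp}/M^{\rm gp})^{\oplus\bullet}$. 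The only cosmetic difference is that your closing reduction to the one-variable case by ``multi-linearity'' is unnecessary (and would be awkward for a non-free fine saturated monoid), since the direct levelwise identification you already gave suffices.
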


\begin{proof} This follows from (the $p$-complete variant of) \cite[Corollary 3.13]{TV18}. In this case, we have that \[\sqrt[\infty]{{\rm Spf}(A, M)} \simeq [{\rm Spf}(A \widehat{\otimes}_{R\langle M \rangle} R\langle M_{\rm perf} \rangle) / \mu_{p^{\infty}}(M)].\] Here $\mu_{p^\infty}(M)$ is the Cartier dual of $M^{\rm gp}_{\rm perf} / M^{\rm gp}$; it is isomorphic to $\mu_{p^\infty}^{\times n}$, where $n$ is the rank of the free abelian group $M^{\rm gp}$. Thus, the \v{C}ech nerve of \eqref{eq:fromspf} recovers \eqref{eq:satcechnervering} after applying ${\rm Spf}(-)$. 
\end{proof}

\begin{corollary}\label{cor:freecechnerve} Let $R$ be a $p$-complete ground ring and let \[(A, M) = (R \langle x_1, \dots, x_n, y_1, \dots, y_m \rangle, \langle y_1, \dots, y_m \rangle)\] be a $p$-complete polynomial pre-log $R$-algebra. The formal spectrum of the saturated descent diagram of $(A, M) \to (A \widehat{\otimes}_{R\langle M \rangle} R \langle M_{\rm perf} \rangle, M_{\rm perf})$  is equivalent to the \v{C}ech nerve of the morphism \[{\rm Spf}(A \widehat{\otimes}_{R\langle M \rangle} R\langle M_{\rm perf} \rangle) \to \sqrt[\infty]{{\rm Spf}(A, M)}. \] 
\end{corollary}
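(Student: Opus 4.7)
The plan is to deduce this statement directly from Proposition \ref{prop:fscechnerve} by verifying its hypotheses in the special case of the polynomial pre-log ring $(A,M)$. Concretely, set $M = \langle y_1, \dots, y_m\rangle \cong \mathbb{N}^m$ and recall that $A = R\langle M\rangle \langle x_1, \dots, x_n\rangle$.

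The first step is to check that $M$ is fine and saturated: as a free commutative monoid $\mathbb{N}^m$ is finitely generated, integral (it injects into $\mathbb{Z}^m = M^{\mathrm{gp}}$), sharp (its only unit is the identity), and saturated, so all three of the conditions ``fine'', ``saturated'', and ``sharp'' used in Definition \ref{def:lqsynfsh} and Proposition \ref{prop:fscechnerve} are satisfied. The second step is to verify the $p$-complete flatness condition of Definition \ref{def:lqsynfsh}: since $R\langle M\rangle \to A$ exhibits $A$ as a $p$-completed polynomial ring on $x_1, \dots, x_n$ over $R\langle M\rangle$, this map is $p$-completely (faithfully) flat.

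The third step is to verify that $(A,M)$ belongs to $\mathrm{lQSyn}_R$. For this one computes the log cotangent complex of the polynomial pre-log ring, which is a finite free $A$-module concentrated in degree $0$ (spanned by $dx_i$ and $d\log y_j$, cf.\ the formula for $\Omega^1_{(R,P)}$ recalled in Section \ref{sec:recollect}); in particular $\mathbb{L}_{(A,M)/\mathbb{Z}_p}$ has $p$-complete Tor-amplitude in $[0,1]$ and $A$ has bounded $p^\infty$-torsion (inherited from $R$), so $(A,M) \in \mathrm{lQSyn}_R^{\mathrm{fsh}}$.

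Having checked all hypotheses, the final step is to invoke Proposition \ref{prop:fscechnerve}: the saturated \v{C}ech nerve \eqref{eq:satcechnervering} associated to the Kummer map $M \to M_{\mathrm{perf}}$, which is precisely the saturated descent diagram of $(A,M) \to (A\,\widehat{\otimes}_{R\langle M\rangle}\, R\langle M_{\mathrm{perf}}\rangle, M_{\mathrm{perf}})$, agrees with the \v{C}ech nerve of \eqref{eq:fromspf} after applying $\mathrm{Spf}(-)$. No step is expected to present a real obstacle; the only point worth flagging is the verification of log quasisyntomicity, which in the polynomial case is immediate from the explicit form of $\Omega^1_{(A,M)}$ but is what genuinely uses the hypothesis that the log structure is freely generated by the $y_j$.
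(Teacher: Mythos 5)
Your proposal is correct and follows exactly the paper's route: the paper's proof is the one-line observation that free commutative monoids are fine and saturated, so Proposition \ref{prop:fscechnerve} applies. Your additional verifications (sharpness, $p$-complete flatness of $R\langle M\rangle \to A$, and membership in ${\rm lQSyn}_R^{\rm fsh}$ via the explicit log cotangent complex) are accurate and simply make explicit what the paper leaves implicit.
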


\begin{proof} Free commutative monoids are fine and saturated, so Proposition \ref{prop:fscechnerve} applies to conclude. 
\end{proof}

\subsection{Log differentials via the infinite root stack} The following construction explains when and how we can extend invariants of interest to the infinite root stack:

 For a $p$-complete ring $R$, recall from \cite[Definition 4.10]{Mon21} that a \emph{quasisyntomic stack} over $R$ is a stack for the quasisyntomic site $\QSyn_R^\mathrm{op}$, for example, the infinite root stack. 

\begin{definition}\label{constr:inftyroot}

Let $\cC$ be an $\infty$-category with small limits, and let $\mathbf{E}$ be a $\cC$-valued quasisyntomic sheaf on $\QSyn_R^\mathrm{op}$,
where $R$ is $p$-complete with bounded $p^\infty$-torsion.
For a quasisyntomic stack $\mathcal{X}$ over $R$,
we define
\[
\mathbf{E}(\mathcal{X})
:=
\lim_{\mathrm{Spf}(A)\to \mathcal{X}} \mathbf{E}(A),
\]
see \cite[Definition 4.13]{Mon21} for the prismatic cohomology case.

For $(A, M)\in \lQSyn_R^\mathrm{fsh}$, Proposition
\ref{prop:fscechnerve} yields an equivalence
\begin{equation}\label{eq:rootstackdef}
\mathbf{E}(\sqrt[\infty]{(A,M)})
\simeq
\lim_\Delta
\mathbf{E}(A\widehat{\otimes}_{R\langle M\rangle} R\langle M_\mathrm{perf}^{\oplus_M^\mathrm{sat}\bullet +1}\rangle).\end{equation}
Note that this construction does \emph{not} apply to algebraic $K$-theory, as it does not have quasisyntomic descent.
\end{definition} 

Since the ordinary cotangent complex $A \mapsto {\Bbb L}_{A / R}$ is an fpqc-sheaf (by e.g.\ \cite[Theorem 3.1]{BMS19}), we can apply ${\Bbb L}_{- / R}$ to the infinite root stack, as explained in Construction \ref{constr:inftyroot}. In favorable cases, this recovers the log cotangent complex:

\begin{lemma}\label{lem:diffroot} Let $(A, M) \in {\rm lQSyn}_R^{\rm qsyn}$ . Then there is a canonical equivalence \[\widehat{\Bbb L}_{(A, M) / R} \simeq \widehat{{\Bbb L}}_{\sqrt[\infty]{{\rm Spf}(A, M)} / R}\] relating the log cotangent complex and the cotangent complex of the infinite root stack.
\end{lemma}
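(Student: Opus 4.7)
The plan is to realize both sides of the claimed equivalence as the totalization of the same cosimplicial diagram, namely the one associated to the saturated \v{C}ech nerve of the Kummer map $M \to M_{\mathrm{perf}}$.

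First, I would apply Construction \ref{constr:inftyroot} to the $p$-complete cotangent complex $A' \mapsto \widehat{\mathbb{L}}_{A'/R}$, which is an fpqc-sheaf (and hence in particular a quasisyntomic sheaf) by \cite[Theorem 3.1]{BMS19}. Combined with the \v{C}ech nerve identification of Proposition \ref{prop:fscechnerve}, this yields
\[
\widehat{\mathbb{L}}_{\sqrt[\infty]{{\rm Spf}(A, M)} / R} \simeq \lim_\Delta \widehat{\mathbb{L}}_{A \widehat{\otimes}_{R\langle M \rangle} R\langle M_{\mathrm{perf}}^{\oplus_M^{\mathrm{sat}} \bullet + 1} \rangle / R}.
\]
Second, I would invoke saturated descent for the log cotangent complex, \cite[Theorem 4.12]{BLMP24}, which, as explained in Remark \ref{rem:weakerassumptions}, applies as soon as the $p$-completed tensor products appearing in the \v{C}ech nerve are discrete; this is guaranteed here by the $p$-complete flatness of $R[M] \to A$ together with \cite[Lemma 4.4]{BLMP24}. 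This produces
\[
\widehat{\mathbb{L}}_{(A, M) / R} \simeq \lim_\Delta \widehat{\mathbb{L}}_{(A \widehat{\otimes}_{R\langle M \rangle} R\langle M_{\mathrm{perf}}^{\oplus_M^{\mathrm{sat}} \bullet + 1} \rangle,\, M_{\mathrm{perf}}^{\oplus_M^{\mathrm{sat}} \bullet + 1}) / R}.
\]

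Finally, I would match the two cosimplicial diagrams level by level. Each of the log structures $M_{\mathrm{perf}}^{\oplus_M^{\mathrm{sat}} \bullet + 1}$ is uniquely $p$-divisible upon group completion, so \cite[Proposition 4.7]{BLMP24} identifies the log cotangent complex at each level with the underlying ordinary cotangent complex. Naturality of all three equivalences in $(A, M)$ then assembles into the desired canonical equivalence between the two limits.

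The step I expect to be most delicate is verifying that Proposition \ref{prop:fscechnerve} and the cited saturated descent results really do apply under the present hypotheses, where $M$ is only required to be fine and saturated and $R[M] \to A$ to be $p$-completely flat, but the ambient pre-log ring $(A, M)$ is not a priori sharp or log quasisyntomic. The decisive input is the $p$-complete discreteness of the tensor products $A \widehat{\otimes}_{R\langle M \rangle} R\langle M_{\mathrm{perf}}^{\oplus_M^{\mathrm{sat}} \bullet + 1} \rangle$, which follows from $p$-complete flatness exactly as in Remark \ref{rem:weakerassumptions}; and the quotient-stack description of the infinite root stack underlying Proposition \ref{prop:fscechnerve} depends only on $M$ being fine and saturated via \cite[Corollary 3.13]{TV18}.
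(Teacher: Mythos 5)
Your proposal is correct and follows essentially the same route as the paper's proof: saturated descent for the log cotangent complex via \cite[Theorem 4.12]{BLMP24}, removal of the log structure on the terms of the saturated \v{C}ech nerve via \cite[Proposition 4.7]{BLMP24} (the monoids $M_{\rm perf}^{\oplus_M^{\rm sat}\bullet+1}$ being semiperfect), and the identification of the resulting totalization with $\widehat{\Bbb L}_{\sqrt[\infty]{{\rm Spf}(A,M)}/R}$ through Proposition \ref{prop:fscechnerve} and the sheaf property of the cotangent complex. Your additional attention to why the discreteness of the $p$-completed tensor products justifies applying these results under the stated hypotheses is a reasonable elaboration of what the paper leaves implicit.
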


\begin{proof} The canonical maps \[\widehat{\Bbb L}_{(A, M) / R} \xrightarrow{\simeq} \lim_\Delta \widehat{\Bbb L}_{(A \widehat{\otimes}_{R \langle M \rangle} R \langle M_{\rm perf}^{\oplus_M^{
\rm sat} \bullet + 1} \rangle, M_{\rm perf}^{\oplus_M^{\rm sat} \bullet + 1}) / R} \xleftarrow{\simeq} \lim_\Delta \widehat{\Bbb L}_{A \widehat{\otimes}_{R \langle M \rangle} R \langle M_{\rm perf}^{\oplus_M^{
\rm sat} \bullet + 1} \rangle / R}\] are equivalences by Proposition \ref{prop:cotcxsatdesc}. In other words, this computes precisely \eqref{eq:rootstackdef}. 
\end{proof}

In particular, our results apply to recover the following observation of Akhil Mathew (stated in \cite{Mat21} and recorded with proof in \cite[Lemma 13.3]{DY24}):

\begin{corollary}\label{cor:onevar} There is an equivalence \[\widehat{\Omega}^1_{(R\langle t \rangle, \langle t \rangle) / R} \simeq \widehat{{\Bbb L}}_{\sqrt[\infty]{{\rm Spf}(R\langle t \rangle, \langle t \rangle)} / R}\] relating $p$-complete log differentials and the cotangent complex of the infinite root stack. \qed
\end{corollary}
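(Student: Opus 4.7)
The plan is to deduce the corollary by applying Lemma \ref{lem:diffroot} to the pre-log ring $(R\langle t \rangle, \langle t \rangle)$ and then identifying the log cotangent complex with the module of log differentials in this elementary case.

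First, I verify the hypotheses of Lemma \ref{lem:diffroot}. The monoid $\langle t \rangle \cong \mathbb{N}$ is free of rank one, hence fine and saturated. The canonical map $R[t] \to R\langle t \rangle$ is $p$-adic completion, which is a $p$-complete isomorphism and therefore $p$-completely flat. Lemma \ref{lem:diffroot} therefore produces an equivalence
\[
\widehat{\mathbb{L}}_{(R\langle t \rangle,\langle t \rangle)/R} \;\simeq\; \widehat{\mathbb{L}}_{\sqrt[\infty]{{\rm Spf}(R\langle t \rangle,\langle t \rangle)}/R}.
\]

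Second, I identify the left-hand side with $\widehat{\Omega}^1_{(R\langle t \rangle,\langle t \rangle)/R}$. By definition recalled in the excerpt, the Gabber log cotangent complex is obtained by left Kan extension from polynomial pre-log rings, on which it coincides with $\Omega^1_{(-,-)}$. Applying this to the polynomial pre-log $R$-algebra $(R[t],\langle t \rangle)$ (together with the standard transitivity along $\mathbb{Z} \to R \to (R[t],\langle t \rangle)$, which does not alter the outcome since $\Omega^1$ behaves compatibly with base change along $\mathbb{Z}[t]\to R[t]$) shows that $\mathbb{L}_{(R[t],\langle t \rangle)/R}$ is discrete and isomorphic to $\Omega^1_{(R[t],\langle t \rangle)/R}$, a free $R[t]$-module of rank one generated by $d\log t$. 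Passing to $p$-completions yields $\widehat{\mathbb{L}}_{(R\langle t \rangle,\langle t \rangle)/R} \simeq \widehat{\Omega}^1_{(R\langle t \rangle,\langle t \rangle)/R}$.

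Combining the two equivalences gives the corollary. There is no substantive obstacle: the content is entirely encoded in Lemma \ref{lem:diffroot} applied to the simplest nontrivial polynomial pre-log ring, together with the tautological observation that for polynomial pre-log rings the Gabber log cotangent complex is computed by log differentials.
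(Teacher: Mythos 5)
Your proposal is correct and matches the paper's intended argument exactly: the corollary is stated with an immediate \qed because it is the special case $(A,M) = (R\langle t\rangle, \langle t\rangle)$ of Lemma \ref{lem:diffroot}, whose hypotheses you verify correctly ($\mathbb{N}$ is fine and saturated, and $R[t]\to R\langle t\rangle$ is $p$-completely flat), combined with the observation that the Gabber log cotangent complex of a polynomial pre-log ring is discrete and given by the log differentials. Nothing further is needed.
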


\section{Logarithmic \texorpdfstring{${\rm TC}$}{TC} via the infinite root stack}\label{sec:tcinftyroot} We now explain how saturated descent for log topological cyclic homology (Proposition \ref{prop:logtcsatdesc}) allows us to describe it in terms of the infinite root stack, accumulating in a proof of Theorem \ref{thm:tcinftyroot}. 

\subsection{Logarithmic \texorpdfstring{${\rm TC}$}{TC} as a left Kan extension} From \cite[Theorem G]{CMM21}, we obtain the following (cf.\ \cite[Proof of Theorem 5.1(ii)]{AMMN22}):

\begin{theorem}\label{thm:logtcleftkan} The functor \[{\rm TC}((-, -) ; {\Bbb Z}_p) \colon {\rm lQSyn} \to {\rm Sp}_p\] is left Kan extended from $p$-complete polynomial pre-log rings. 
\end{theorem}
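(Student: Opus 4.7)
The plan is to transfer \cite[Theorem G]{CMM21} -- the analogous non-log statement that ordinary ${\rm TC}(-;{\Bbb Z}_p)$ on $p$-complete quasisyntomic rings is left Kan extended from $p$-complete polynomial ${\Bbb Z}_p$-algebras -- to the logarithmic setting, using Theorem \ref{thm:tcinftyroot} as a bridge. The overall strategy is a logarithmic adaptation of \cite[Proof of Theorem 5.1(ii)]{AMMN22}.

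First I would reduce to the case $(A,M) \in {\rm lQSyn}^{\rm fsh}_{{\Bbb Z}_p}$ (Definition \ref{def:lqsynfsh}), exploiting the invariance of log ${\rm TC}$ under the pre-log to log left adjoint and a standard flatness argument to arrange that ${\Bbb Z}_p[M] \to A$ is $p$-completely flat. Inside ${\rm lQSyn}^{\rm fsh}$, Theorem \ref{thm:tcinftyroot} provides an equivalence ${\rm TC}((A,M);{\Bbb Z}_p) \simeq {\rm TC}(\sqrt[\infty]{{\rm Spf}(A,M)};{\Bbb Z}_p)$, and Proposition \ref{prop:fscechnerve} computes the right-hand side as $\lim_\Delta {\rm TC}(B_\bullet;{\Bbb Z}_p)$ for $B_\bullet$ the saturated \v{C}ech nerve of the Kummer cover $(A,M) \to (A_{\rm perf}, M_{\rm perf})$. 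Since each $B_\bullet$ is an ordinary $p$-complete quasisyntomic ring, \cite[Theorem G]{CMM21} applies termwise and presents each ${\rm TC}(B_\bullet;{\Bbb Z}_p)$ as a sifted colimit of ${\rm TC}$ of $p$-complete polynomial ${\Bbb Z}_p$-algebras mapping to $B_\bullet$. By Corollary \ref{cor:freecechnerve}, this presentation already occurs at the level of polynomial pre-log rings, so the relevant sifted colimits can be reorganized along polynomial pre-log rings mapping to $(A,M)$.

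The hard part will be the resulting limit-colimit interchange: reassembling the termwise left Kan extensions into a single left Kan extension of log ${\rm TC}$ along the category of polynomial pre-log rings over $(A,M)$ requires commuting the sifted colimits past the limit over $\Delta$. I would control this interchange by invoking the motivic filtration on log ${\rm TC}((-,-);{\Bbb Z}_p)$ of \cite[Theorem 1.8(4)]{BLPO23Prism}: it is complete and exhaustive, and its graded pieces are (shifts of) Nygaard-completed log prismatic cohomology, which by Proposition \ref{prop:fscechnerve} are themselves limits of ordinary Nygaard-completed prismatic cohomology of the $B_\bullet$. On graded pieces, the bounded $p$-complete Tor-amplitude of the log cotangent complex built into lQSyn makes the interchange tractable, essentially reducing it to the non-log case of \cite{CMM21} applied termwise. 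Completeness of the filtration then transports the equivalence back to ${\rm TC}((-,-);{\Bbb Z}_p)$, identifying it with the left Kan extension from polynomial pre-log rings and yielding the theorem.
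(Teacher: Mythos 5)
Your proposal inverts the paper's logic and, more importantly, does not prove the stated theorem. The paper's proof is a one-step transfer: Lemma \ref{lem:commuteswithcolims} shows that $\mathrm{THH}(-,-)$ commutes with sifted colimits (because $B^{\rm rep}(-) \simeq - \oplus B(-)^{\rm gp}$), and with that in hand the argument of \cite[Proof of Theorem 2.7]{CMM21} (cf.\ \cite[Proof of Theorem 5.1(ii)]{AMMN22}) runs verbatim on ${\rm lQSyn}$; the interchange of the sifted colimit with the limits defining ${\rm TC}$ is exactly what the CMM machinery already handles, using the quasisyntomicity hypothesis directly on $(A,M)$. No infinite root stack, no saturated descent, and no Theorem \ref{thm:tcinftyroot} enter. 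Indeed, in the paper Theorem \ref{thm:logtcleftkan} is the input used to upgrade Theorem \ref{thm:tcinftyroot} from ${\rm lQSyn}^{\rm fsh}$ to all animated pre-log rings, so routing the proof through the root stack gets the dependency backwards (though, since Theorem \ref{thm:tcinftyroot} on ${\rm lQSyn}^{\rm fsh}$ only relies on Proposition \ref{prop:logtcsatdesc}, this is an inversion rather than an outright circle).

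The concrete gaps: first, your opening reduction to ${\rm lQSyn}^{\rm fsh}$ is unjustified and is where the proposal fails. The theorem is asserted for all of ${\rm lQSyn}$, where $M$ need only be integral; there is no ``standard flatness argument'' that replaces an arbitrary log quasisyntomic $(A,M)$ by one with $M$ sharp (and fine saturated, which Proposition \ref{prop:fscechnerve} additionally requires) and ${\Bbb Z}_p[M]\to A$ $p$-completely flat without changing ${\rm TC}((A,M);{\Bbb Z}_p)$ --- logification does neither. Since all of the saturated-descent/root-stack technology in this paper lives on ${\rm lQSyn}^{\rm fsh}$ (or the mild weakening of Remark \ref{rem:weakerassumptions}), your strategy cannot reach the stated generality. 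Second, even on ${\rm lQSyn}^{\rm fsh}$, the ``reorganization'' step is not supplied: applying \cite[Theorem G]{CMM21} termwise to the \v{C}ech nerve produces polynomial resolutions of each $B_n$ as a bare ring, and these do not assemble into a simplicial diagram of polynomial \emph{pre-log} rings over $(A,M)$; Corollary \ref{cor:freecechnerve} goes in the opposite direction (from a polynomial pre-log ring to its nerve), so it does not perform this reassembly. The subsequent limit--colimit interchange you flag as the hard part would then still need a genuine argument; the motivic filtration can plausibly control it, but as written this is a sketch of a much harder proof of a weaker statement.
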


\begin{proof} Using Lemma \ref{lem:commuteswithcolims}, we can argue exactly as in \cite[Proof of Theorem 2.7]{CMM21} (cf.\ \cite[Proof of Theorem 5.1(ii)]{AMMN22}). 
\end{proof}

\subsection{Topological cyclic homology of the infinite root stack} 
Topological cyclic homology ${\rm TC}(- ; {\Bbb Z}_p)$ is an fpqc-sheaf: This can be seen, for instance, by arguing as in the proof of Proposition \ref{prop:logtcsatdesc} to reduce this to the same question for ${\rm THH}$ and ${\rm THH}^{tC_p}$, which is covered by \cite[Section 3]{BMS19}. We may thus evaluate ${\rm TC}$ on the infinite root stack as in Construction \ref{constr:inftyroot}, and we obtain:

\begin{proof}[Proof of Theorem \ref{thm:tcinftyroot}] The target of the equivalence of Proposition \ref{prop:logtcsatdesc} is precisely the description of ${\rm TC}(\sqrt[\infty]{{\rm Spf}(A, M)} ; {\Bbb Z}_p)$ from \eqref{eq:rootstackdef}. 
\end{proof}

A variant more in the spirit of the notion of derived log prismatic cohomology that we shall study in the next section is the left Kan extension of  \begin{equation}\label{eq:pleasecommutewithsiftedcolimits}{\rm lPoly}_{{\Bbb Z}_p}^\wedge \to {\rm Sp_p}, \quad (A, M) \to {\rm TC}(\sqrt[\infty]{{\rm Spf}(A, M)} ; {\Bbb Z}_p)\end{equation} from $p$-complete polynomial pre-log ${\Bbb Z}_p$-algebras to $p$-complete spectra, that we denote by $\sqrt[\infty]{{\rm TC}}((-, -) ; {\Bbb Z}_p)$. We observe that, since Theorem \ref{thm:tcinftyroot} in particular applies to all $p$-complete polynomial pre-log rings, we obtain an equivalence ${\rm TC}((A, M) ; {\Bbb Z}_p) \simeq \sqrt[\infty]{{\rm TC}}((A, M) ; {\Bbb Z}_p)$ for all $p$-complete animated pre-log rings $(A, M)$. Here, we understand ${\rm TC}((A, M) ; {\Bbb Z}_p)$ to be the left Kan extension of ${\rm TC}((-, -) ; {\Bbb Z}_p)$ from $p$-complete polynomial pre-log rings, which agrees with Rognes' log ${\rm TC}$ on ${\rm lQSyn}$ by Theorem \ref{thm:logtcleftkan}.

\begin{remark}\label{rem:sstcomp}
    In \cite{SST20}, the authors considered $\TC$ of the infinite root stack as \[
    \TC^{SST}({\rm Spf}(A,M)):=\colim_m \TC(\sqrt[m]{{\rm Spf}(A,M)}).
    \]
    This does not recover our definition, even in the polynomial case; as we shall see below, the necessary interchange of (co)limits is not applicable in this situation. In fact, $\sqrt[m]{{\rm Spf}(A,M)}$ has an fpqc atlas given by ${\rm Spf}(A \widehat{\otimes}_{R \langle M \rangle} R\langle M \rangle)$, where the term on the right is given by the map $M \xrightarrow{p^m} M$. If $\check{C}^n$ denotes the $n$-term of the saturated \v Cech nerve, we have an equivalence \[
    {\rm TC}(A\widehat{\otimes}_{R\langle M\rangle}R \langle \check{C}^n(M\to M_{\perf}) \rangle) \simeq \colim_m {\rm TC}(A\widehat{\otimes}_{R\langle M\rangle}R \langle \check{C}^n(M\xrightarrow{p^m} M)\rangle),
    \] 
    for each $n$. The situation may thus be summarized by the following equivalences and the canonical map $(*)$:
    \begin{align*}
        &\TC^{SST}({\rm Spf}(A,M)):=\colim_m \TC(\sqrt[m]{{\rm Spf}(A,M)})\\
        &\simeq \colim_m \lim_n \TC(A\widehat{\otimes}_{R\langle M\rangle}R \langle \check{C}^n(M\xrightarrow{p^m} M)\rangle) \\  &\xrightarrow{(*)}
        \lim_n \colim_m \TC(A\widehat{\otimes}_{R\langle M\rangle}R \langle \check{C}^n(M\xrightarrow{p^m} M)\rangle)\\
        &\simeq \lim_n \TC(A\widehat{\otimes}_{R\langle M\rangle}R \langle \check{C}^n(M\to M_{\rm perf})\rangle) \simeq \TC(\sqrt[\infty]{{\rm Spf}(A,M)}).
    \end{align*}
    The map $(*)$ above is not an equivalence. This can be seen, for instance, using the decompositions of \cite[Corollary C]{SST20}, which are not available for Rognes' log ${\rm TC}$. Indeed, these decompositions imply that ${\rm TC}^{\rm SST}(-)$ is not $\square$-invariant, which is the case for Rognes' log ${\rm TC}$ (cf.\ \cite[Section 8]{BPO24}). 
\end{remark}

\section{Comparison with site-theoretic log prismatic cohomology}\label{sec:sitecomp} We now provide the comparison with the site-theoretic definition of log prismatic cohomology introduced by Koshikawa \cite{Kos22} and elaborated upon by Koshikawa--Yao \cite{KY23} and Diao--Yao \cite{DY24}.  In particular, we prove Theorem \ref{thm:nygaard} from the introduction. Moreover, we discuss the equivalence between the natural choice of Nygaard filtration arising from saturated descent and that pursued by Koshikawa--Yao (Section \ref{sec:nygaardcomp}). 

\subsection{Review of derived log prismatic cohomology} 
Let us briefly review the derived log prismatic cohomology introduced by Koshikawa--Yao \cite[Section 4]{KY23}. Let $(A, I, M)$ be a bounded pre-log prism in the sense of Koshikawa \cite[Definition 3.3]{Kos22}. We define a functor \[{\rm lPoly}_{(A/I, M)}^\wedge \to {\cal D}(A), \qquad (R, P) \mapsto \Prism_{(R, P) / (A, M)},\] where $\Prism_{(R, P) / (A, M)}$ is the site-theoretic definition of log prismatic cohomology of \cite[Section 4.2]{Kos22}. By left Kan extension, this gives a functor \[\Prism_{(-, -) / (A, M)} \colon {\rm Ani}({\rm PreLog}_{(A/I, M)^\wedge}) \to {\cal D}(A)\] that we shall refer to as \emph{derived log prismatic cohomology}.

\subsection{Derived log prismatic cohomology via the infinite root stack} Suppose now that $(A, I)$ is a perfect prism. We could have equally well worked with a pre-log prism $(A, I, M)$ with $M$ a perfect monoid by \cite[Proposition 4.5]{KY23} and \cite[Proposition 4.7]{BLMP24}. Extending the present discussion to more general base prisms would require additional effort. 

By quasisyntomic descent, we may apply Definition \ref{constr:inftyroot} to define $\Prism_{\sqrt[\infty]{{\rm Spf}(R, P)} / A}$ for $(R, P) \in {\rm lQSyn}_{A/I}$. 

\begin{lemma}\label{lem:prisminftyroot} With notation as above, assume further that $(R, P) \in {\rm lQSyn}_{A/I}^{\rm qsyn}$. Then there is a natural equivalence $\Prism_{(R, P) / A} \simeq \Prism_{\sqrt[\infty]{{\rm Spf}(R, P)} / A}$  relating the derived log prismatic cohomology of $(R, P)$ to that of the infinite root stack $\sqrt[\infty]{{\rm Spf}(R, P)}$. 
\end{lemma}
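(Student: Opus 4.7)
The strategy mirrors that of Lemma \ref{lem:diffroot}. By Proposition \ref{prop:fscechnerve} together with the fact that ordinary prismatic cohomology $\Prism_{(-)/A}$ is an fpqc-sheaf (as recalled for ${\rm TC}$ in Section \ref{sec:tcinftyroot}), Construction \ref{constr:inftyroot} and the equivalence \eqref{eq:rootstackdef} identify the target with
\[
\Prism_{\sqrt[\infty]{{\rm Spf}(R,P)}/A} \simeq \lim_\Delta \Prism_{\bigl(R \widehat{\otimes}_{A/I\langle P \rangle} A/I\langle P_{\rm perf}^{\oplus_P^{\rm sat} \bullet + 1}\rangle\bigr)/A}.
\]
The lemma thus decomposes into two assertions: (i) derived log prismatic cohomology $\Prism_{(-,-)/A}$ satisfies saturated descent on $\lQSyn_{A/I}^{\rm fsh}$, and (ii) at each cosimplicial level of the saturated \v{C}ech nerve the log structure may be ignored.

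For step (i), my plan is to reduce to the log cotangent complex. Since $\Prism_{(-,-)/A}$ is derived $(p,I)$-complete, it suffices to verify saturated descent modulo $I$, i.e., for $\overline{\Prism}_{(-,-)/A}$. This object is equipped with a Hodge-Tate (equivalently, conjugate) filtration whose graded pieces are, up to Breuil-Kisin twist and shift, the derived wedge powers of the log cotangent complex $\mathbb{L}_{(-,-)/(A/I)}$. Saturated descent for the log cotangent complex is the content of \cite[Theorem 4.12, Example 4.13(ii)]{BLMP24}, and propagates to its derived wedge powers by standard sifted-colimit arguments (as in the proof of Lemma \ref{lem:thhsatdesc}). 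Completeness of the Hodge-Tate filtration then transfers saturated descent to $\overline{\Prism}_{(-,-)/A}$, and completeness of the $I$-adic filtration lifts it to $\Prism_{(-,-)/A}$ itself.

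For step (ii), each monoid appearing in the saturated \v{C}ech nerve has the form $P_{\rm perf}^{\oplus_P^{\rm sat} \bullet + 1} \cong P_{\rm perf} \oplus (P_{\rm perf}^{\rm gp}/P^{\rm gp})^{\oplus \bullet}$ and is therefore perfect. Hence \cite[Proposition 4.5]{KY23}, the log prismatic analog of \cite[Proposition 4.7]{BLMP24} invoked in the proof of Lemma \ref{lem:diffroot}, applies at each level to identify log prismatic cohomology of the pre-log ring with the prismatic cohomology of its underlying ring. Assembled cosimplicially, this precisely matches the description of $\Prism_{\sqrt[\infty]{{\rm Spf}(R,P)}/A}$ displayed above.

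The principal obstacle is step (i): propagating saturated descent from the log cotangent complex through the Hodge-Tate filtration and then along the $I$-adic filtration requires a careful interchange of limits relying on completeness of both filtrations. Once that is in hand, step (ii) and the comparison with the infinite root stack via Proposition \ref{prop:fscechnerve} are formal.
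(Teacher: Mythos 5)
Your proposal matches the paper's proof essentially step for step: the paper likewise identifies $\Prism_{\sqrt[\infty]{{\rm Spf}(R,P)}/A}$ with the totalization over the saturated \v{C}ech nerve via Proposition \ref{prop:fscechnerve}, and establishes the two required equivalences (saturated descent for log prismatic cohomology, and levelwise removal of the log structure) by reducing modulo $I$, using the derived Hodge--Tate comparison \cite[Proposition 4.5]{KY23} to pass to the log cotangent complex, and then citing \cite[Theorem 4.12]{BLMP24} and \cite[Proposition 4.7]{BLMP24} respectively. One small correction: the monoids $P_{\rm perf}^{\oplus_P^{\rm sat}\bullet+1}\cong P_{\rm perf}\oplus(P_{\rm perf}^{\rm gp}/P^{\rm gp})^{\oplus\bullet}$ are only \emph{semiperfect}, not perfect (the $p$-power map on the Pr\"ufer-type factors is surjective but not injective), and \cite[Proposition 4.5]{KY23} is the Hodge--Tate comparison rather than itself the insensitivity statement --- but semiperfectness is exactly the hypothesis of \cite[Proposition 4.7]{BLMP24}, so the argument goes through as you describe.
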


\begin{proof} We can compute $\Prism_{\sqrt[\infty]{{\rm Spf}(R, P)} / A}$ in terms of the ordinary prismatic cohomology of the saturated \v{C}ech nerve \eqref{eq:satcechnervering} associated to $P \to P_{\rm perf}$. This takes the form \begin{equation}\label{eq:satcechnervepperf}\begin{tikzcd}[column sep = small] R\langle P_{\rm perf} \rangle \ar[shift left = 0.5]{r} \ar[shift right = 0.5]{r} \!\! &\! R \widehat{\otimes}_{A/I
\langle P_{\rm perf} \rangle} A/I\langle P_{\rm perf}  \oplus_P^{\rm sat} P_{\rm perf}\rangle  \ar{r} \ar[shift left = 1]{r} \ar[shift right = 1]{r} \!\!&\! \cdots,\end{tikzcd}\end{equation} see \eqref{eq:rootstackdef}. Ignoring the question of Nygaard filtrations for now, our remaining task is to argue that $\Prism_{(R, P) / A}$ can be computed by the same diagram. For this, we denote by $R\langle P \rangle^{\otimes^{\rm sat} \bullet + 1}$ the diagram \eqref{eq:satcechnervepperf}, while we write $P_{\rm perf}^{\oplus^{\rm sat}_P \bullet + 1}$ for the diagram \eqref{eq:satcechnerve} associated to $\varphi \colon P \to P_{\rm perf}$. We then observe that there are canonical maps \begin{equation}\label{eq:botheqs}\Prism_{(R, P) / A} \xrightarrow{} \Prism_{(R\langle P \rangle^{\otimes^{\rm sat} \bullet + 1}, P_{\rm perf}^{\oplus^{\rm sat}_P \bullet + 1}) / A} \xleftarrow{} \Prism_{R\langle P \rangle^{\otimes^{\rm sat} \bullet + 1} / A},\end{equation} where the right-hand term computes $\Prism_{\sqrt[\infty]{{\rm Spf}(R, P)} / A}$. We claim that both maps are equivalences on totalizations. For this, we may first reduce modulo $A/I$ and argue as in \cite[Proof of Corollary 4.22]{BLMP24} and commute the base-change past the totalization. Then we use the (derived) Hodge--Tate comparison \cite[Proposition 4.5]{KY23} to reduce checking that \eqref{eq:botheqs} are equivalences by replacing (log) prismatic cohomology with the (log) cotangent complex. This is true, by saturated descent for the log cotangent complex \cite[Theorem 4.12]{BLMP24} on one hand, and the fact that the $p$-complete log cotangent complex does not see semiperfect monoids on the other \cite[Proposition 4.7]{BLMP24}. Thus the maps \eqref{eq:botheqs} are equivalences. 
\end{proof}

Let us write $\sqrt[\infty]{\Prism}_{(-, -) / A}$ for the left Kan extension of $\Prism_{\sqrt[\infty]{\rm Spf}(-, -) / A}$ from $p$-complete polynomial $A/I$-algebras to all animated $p$-complete $A/I$-algebras. We have the following generalization of \cite[Theorem 13.5]{DY24}:

\begin{theorem}\label{thm:prisminftyroot} Let $(A, I)$ be a perfect prism. The two functors \[\sqrt[\infty]{\Prism}_{(-, -) / A}, \quad {\Prism}_{(-, -) / A} \colon {\rm Ani}({\rm PreLog}_{A/I}^\wedge) \to {\cal D}(A)\] are canonically equivalent.
\end{theorem}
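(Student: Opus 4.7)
The plan is to reduce the equivalence to the common subcategory of $p$-complete polynomial pre-log $A/I$-algebras, from which both functors are left Kan extended. By construction, $\sqrt[\infty]{\Prism}_{(-,-)/A}$ is left Kan extended from $\mathrm{lPoly}_{A/I}^\wedge$, and the site-theoretic $\Prism_{(-,-)/A}$ is left Kan extended from the same subcategory via the definition of derived log prismatic cohomology recalled above. Once I produce a natural equivalence between the restrictions of the two functors to polynomial pre-log rings, the universal property of left Kan extension supplies the desired natural equivalence on all of $\mathrm{Ani}(\mathrm{PreLog}_{A/I}^\wedge)$.

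The first step is to observe that every $p$-complete polynomial pre-log $A/I$-algebra
\[
(R, P) = (A/I\langle x_1,\dots,x_n, y_1,\dots,y_m\rangle, \langle y_1,\dots,y_m\rangle)
\]
belongs to $\lQSyn_{A/I}^{\mathrm{fsh}}$: the free monoid $P$ is fine, saturated and sharp; the structure map $A/I\langle P\rangle \to R$ is tautologically $p$-completely flat, being a $p$-completed polynomial extension; and $(R,P)$ is log quasisyntomic over the perfectoid base $A/I$ since its $p$-complete log cotangent complex is a finite free $R$-module concentrated in degree zero. Lemma \ref{lem:prisminftyroot} then supplies, for each such $(R, P)$, a canonical equivalence $\Prism_{(R,P)/A} \simeq \Prism_{\sqrt[\infty]{{\rm Spf}(R,P)}/A}$.

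The main obstacle will be to upgrade these objectwise equivalences into a natural transformation of functors $\mathrm{lPoly}_{A/I}^\wedge \to \cD(A)$. The proof of Lemma \ref{lem:prisminftyroot} produces the equivalence as a zigzag \eqref{eq:botheqs} through the (log) prismatic cohomology of the saturated \v{C}ech nerve of $P \to P_{\mathrm{perf}}$, with the comparison argued via Hodge--Tate reduction together with saturated descent for the log cotangent complex. Each leg of this zigzag is induced by visibly functorial constructions---formation of the saturated \v{C}ech nerve, base change along $(A, I)$, and the Hodge--Tate comparison of \cite[Proposition 4.5]{KY23}---so the naturality in $(R, P)$ should come for free. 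Carefully organizing these ingredients into a functor-level equivalence on $\mathrm{lPoly}_{A/I}^\wedge$ is the technical heart of the argument; once in place, left Kan extension concludes the proof, giving a common generalization of Lemma \ref{lem:prisminftyroot} and \cite[Theorem 13.5]{DY24}.
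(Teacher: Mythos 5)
Your proposal is correct and follows exactly the paper's argument: both functors are left Kan extended from $p$-complete polynomial pre-log $A/I$-algebras, where they agree by Lemma \ref{lem:prisminftyroot} (which applies since polynomial pre-log rings lie in ${\rm lQSyn}_{A/I}^{\rm fsh}$), and the universal property of left Kan extension concludes. The paper states this in two lines; your additional verification that polynomial pre-log rings satisfy the hypotheses of the lemma, and your remarks on the functoriality of the zigzag \eqref{eq:botheqs}, are sensible elaborations of the same proof rather than a different route.
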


\begin{proof} By Lemma \ref{lem:prisminftyroot}, they agree for $p$-complete polynomial pre-log rings over $A/I$, and so the result follows from their definitions as left Kan extensions.
\end{proof}

We next want to claim that the equivalence of Theorem \ref{thm:prisminftyroot} is compatible with Nygaard filtrations. To see this, we will first introduce an alternative definition for the Nygaard filtration on the site-theoretic log-prismatic cohomology, which deviates from that pursued by Koshikawa--Yao \cite[Section 5.5]{KY23}, inspired by the equivalences \eqref{eq:botheqs}. We explain in Section \ref{sec:nygaardcomp} that the two filtrations are equivalent.

The repletion/exactification of the multiplication map $P_{\rm perf}^{\oplus \bullet + 1} \to P_{\rm perf}$ is canonically isomorphic to the projection \[P_{\rm perf}^{\oplus^{\rm ex} \bullet + 1} \cong P_{\rm perf} \oplus (P_{\rm perf}^{\rm gp})^{\oplus \bullet}  \to P_{\rm perf}.\] Similarly, the repletion/exactification of the multiplication map $P_{\rm perf}^{\oplus_P \bullet + 1} \to P_{\rm perf}$ is canonically isomorphic to the projection \[P_{\rm perf}^{\oplus_P^{\rm ex} \bullet + 1} \cong P_{\rm perf} \oplus (P_{\rm perf}^{\rm gp}/P^{\rm gp})^{\oplus \bullet}  \to P_{\rm perf}.\] This is canonically isomorphic to the saturated coproducts $P_{\rm perf}^{\oplus_P^{\rm sat} \bullet + 1} \cong P_{\rm perf} \oplus (P_{\rm perf}^{\rm gp} / P^{\rm gp})^{\oplus \bullet}$ (cf.\ \cite[Section 4]{BLMP24} or \cite{Niz08}) and receives a map from $P_{\rm perf}^{\oplus^{\rm ex} \bullet + 1}$ by means of the quotient map $P_{\rm perf}^{\rm gp} \to P_{\rm perf}^{\rm gp} / P^{\rm gp}$. If $(A, I)$ is a perfect prism and $(R, P) \in {\rm lPoly}_{A/I}^\wedge$, the right-hand side of \eqref{eq:botheqs} then participates in a further equivalence \begin{equation}\label{eq:anothereq}\Prism_{R\langle P \rangle^{\otimes^{\rm sat} \bullet + 1} / A} \xrightarrow{\simeq} \Prism_{R\langle P \rangle^{\otimes^{\rm sat} \bullet + 1} / A\langle P_{\rm perf}^{\oplus^{\rm ex} \bullet + 1} \rangle},\end{equation} as $A/I \langle P_{\rm perf}^{\oplus^{\rm ex} \bullet + 1} \rangle$ is a different choice of levelwise perfectoid mapping (indeed, levelwise surjecting) to the levelwise quasiregular semiperfectoid ring $R\langle P \rangle^{\otimes^{\rm sat} \bullet + 1}$. We make the following definition: 

\begin{definition}\label{def:nygaardfil} Let $(A, I)$ be a perfect prism and let $(R, P) \in {\rm lPoly}_{A/I}^\wedge$. With notation as in the proof of Theorem \ref{thm:prisminftyroot}, we define the \emph{Nygaard filtration} to be \[{\rm Fil}_N^{\ge i} \Prism_{(R, P) / A} := {\rm Tot}({\rm Fil}^{\ge i}_N\Prism_{R\langle P \rangle^{\otimes^{\rm sat} \bullet + 1} / A\langle P_{\rm perf}^{\oplus^{\rm ex} \bullet + 1} \rangle}),\] that is, the totalization of the cosimplicial object ${\rm Fil}^{\ge i}_N\Prism_{R\langle P \rangle^{\otimes^{\rm sat} \bullet + 1} / A\langle P_{\rm perf}^{\oplus^{\rm ex} \bullet + 1} \rangle}$. For a general $p$-complete animated pre-log $A/I$-algebra, we define the Nygaard filtration by left Kan extension. 
\end{definition}

We now work towards the proof of Theorem \ref{thm:nygaard}, largely inspired by the proof strategy of \cite[Theorem 13.1]{BS22}. Let us first recall from \cite[Construction 3.9]{BLMP24} that, given a perfectoid ring $A/I$, we define the functor \[{\rm Ani}({\rm lPoly}_{A/I}^\wedge) \to {\cal D}(A)^\wedge_{(p, I)}, \quad (R, P) \mapsto \widehat{\Prism}^{\rm nc}_{(R, P) / A}\]  by left Kan extension of Nygaard-completed log prismatic cohomology $\widehat{\Prism}_{(R, P) / A}$ from the log polynomial case. 

\begin{lemma}\label{lem:polynygaard} Let $(R, P) \in {\rm lPoly}_{A/I}^\wedge$ be a $p$-complete polynomial pre-log $A/I$-algebra. Then there is an equivalence ${\Prism}_{(R, P) / A} \xrightarrow{\simeq} \widehat{\Prism}_{(R, P) / A}^{\rm nc} $ of $(p, I)$-complete objects in ${\cal D}(A)$. 
\end{lemma}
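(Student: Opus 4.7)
The plan parallels \cite[Theorem 13.1]{BS22}, using the saturated \v{C}ech nerve as the logarithmic analog of a perfectoid cover by a qrsp. Since $(R,P) \in {\rm lPoly}_{A/I}^\wedge$, left Kan extension tautologically gives $\widehat{\Prism}_{(R,P)/A}^{\rm nc} \simeq \widehat{\Prism}_{(R,P)/A}$, so the task reduces to exhibiting an equivalence $\Prism_{(R,P)/A} \simeq \widehat{\Prism}_{(R,P)/A}$. My strategy is to express both sides as totalizations of a common cosimplicial diagram on which the site-theoretic and qsyn-descent definitions of prismatic cohomology manifestly coincide.

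For the source, the proof of Lemma \ref{lem:prisminftyroot} (in particular, the equivalences \eqref{eq:botheqs}) yields
\[
\Prism_{(R,P)/A} \;\simeq\; \mathrm{Tot}\bigl(\Prism_{R\langle P\rangle^{\otimes^{\rm sat}\bullet+1}/A}\bigr),
\]
where each term is the derived prismatic cohomology of a quasiregular semiperfectoid $A/I$-algebra relative to the perfect base prism $(A,I)$. On such input, the site-theoretic (= derived) prismatic cohomology is automatically Nygaard-complete and coincides with $\widehat{\Prism}_{R\langle P\rangle^{\otimes^{\rm sat}\bullet+1}/A}$; this is the non-logarithmic case, as in \cite{BS22}.

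For the target, I would establish saturated descent for $\widehat{\Prism}_{(-,-)/A}$ on the polynomial locus by filtering by the Nygaard filtration---which is complete on $\widehat{\Prism}$ by construction---and reducing to graded pieces, which are described in terms of the $p$-completed log cotangent complex equipped with its Hodge filtration, as in \cite[Proposition 7.6]{BLPO23Prism}. Saturated descent for these graded pieces is then provided by \cite[Theorem 4.12]{BLMP24}. Moreover, since semiperfect pre-log structures are invisible to the $p$-completed log cotangent complex by \cite[Proposition 4.7]{BLMP24}, the pre-log structure disappears at each level of the saturated \v{C}ech nerve, yielding
\[
\widehat{\Prism}_{(R,P)/A} \;\simeq\; \mathrm{Tot}\bigl(\widehat{\Prism}_{R\langle P\rangle^{\otimes^{\rm sat}\bullet+1}/A}\bigr).
\]
Matching the two totalizations termwise then produces the claimed equivalence, with the Nygaard filtrations agreeing via Definition \ref{def:nygaardfil}.

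The main obstacle I anticipate is the saturated descent statement for $\widehat{\Prism}$, carried out in a way compatible with the Nygaard filtration. The filtrations on the two sides are a priori defined differently (the source via Definition \ref{def:nygaardfil}, the target via the qsyn-descent/$\pi_0{\rm TC}^-$ construction), and matching them amounts to checking that the saturated descent equivalence preserves filtrations. This should be tractable by the reduction to graded pieces described above, in the same spirit as Lemma \ref{lem:thhsatdesc} reduces saturated descent for ${\rm THH}$ to saturated descent for the log cotangent complex.
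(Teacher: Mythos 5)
Your proposal is correct and follows essentially the same route as the paper: both sides are expressed as totalizations of the saturated \v{C}ech nerve $\Prism_{R\langle P\rangle^{\otimes^{\rm sat}\bullet+1}}$ and matched termwise via \cite[Theorem 13.1]{BS22}. The only difference is that where you sketch a proof of saturated descent for $\widehat{\Prism}$ by reducing along the Nygaard filtration to the log cotangent complex, the paper simply cites \cite[Corollary 4.22]{BLMP24}, which is precisely that statement.
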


\begin{proof} By Lemma \ref{lem:prisminftyroot} (and the equivalences \eqref{eq:botheqs} in particular), we may compute $\Prism_{(R, P) / A}$ as the totalization of the cosimplicial diagram $\Prism_{R\langle P \rangle^{\otimes^{\rm sat} \bullet + 1}}$. By \cite[Corollary 4.22]{BLMP24}, the same is true for $\widehat{\Prism}_{(R, P) / A}^{\rm nc}$.
\end{proof}

While left somewhat implicit in the proof, we find it worthwhile to highlight that the equivalence of Lemma \ref{lem:polynygaard} comes to life from the levelwise equivalences \[\Prism_{R\langle P \rangle^{\otimes^{\rm sat} \bullet + 1}} \xrightarrow{\simeq} \widehat{\Prism}^{\rm nc}_{R\langle P \rangle^{\otimes^{\rm sat} \bullet + 1}}\] of \cite[Theorem 13.1]{BS22} and saturated descent for both constructions. 

\begin{corollary}\label{cor:pcompleteani} For any $p$-complete animated pre-log $A/I$-algebra $(R, P)$, there is an equivalence $\Prism_{(R, P) / A} \xrightarrow{\simeq} \widehat{\Prism}^{\rm nc}_{(R, P) / A} $. \qed
\end{corollary}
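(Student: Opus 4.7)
The plan is to leverage the fact that both sides of the purported equivalence are, by construction, left Kan extended from $p$-complete polynomial pre-log $A/I$-algebras, and to invoke Lemma \ref{lem:polynygaard} to identify them on this full subcategory. This is a general principle: two functors out of $\mathrm{Ani}(\mathrm{lPoly}_{A/I}^{\wedge})$ that are both left Kan extended from $\mathrm{lPoly}_{A/I}^{\wedge}$ are naturally equivalent as soon as a natural transformation between them restricts to an equivalence on $\mathrm{lPoly}_{A/I}^{\wedge}$.

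First, I would recall that derived log prismatic cohomology $\Prism_{(-,-)/A}$ is defined as the left Kan extension of the site-theoretic log prismatic cohomology from $\mathrm{lPoly}_{(A/I, M)}^{\wedge}$ (as recalled at the start of this section), and that $\widehat{\Prism}^{\mathrm{nc}}_{(-,-)/A}$ is, by its very definition via \cite[Construction 3.9]{BLMP24}, also left Kan extended from $p$-complete polynomial pre-log $A/I$-algebras. Next, I would observe that the natural transformation $\Prism_{(-,-)/A} \to \widehat{\Prism}^{\mathrm{nc}}_{(-,-)/A}$ exists functorially and, by Lemma \ref{lem:polynygaard}, is a levelwise equivalence on the subcategory $\mathrm{lPoly}_{A/I}^{\wedge}$.

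Since left Kan extension preserves equivalences (both take the same values on the generating subcategory and both commute with sifted colimits by construction), the induced map on left Kan extensions is an equivalence on all of $\mathrm{Ani}(\mathrm{lPoly}_{A/I}^{\wedge})$. In particular, $\Prism_{(R,P)/A} \xrightarrow{\simeq} \widehat{\Prism}^{\mathrm{nc}}_{(R,P)/A}$ for every $p$-complete animated pre-log $A/I$-algebra $(R,P)$.

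There is no real obstacle here — the substantive content was already absorbed into Lemma \ref{lem:polynygaard}, which in turn rested on saturated descent for both theories and the non-logarithmic comparison \cite[Theorem 13.1]{BS22} applied levelwise to the saturated \v{C}ech nerve. The only point that deserves a brief verification is that both left Kan extensions are formed in the $(p,I)$-complete setting in a compatible way, so that the equivalence is obtained internally to $\cD(A)^{\wedge}_{(p,I)}$; this is immediate from the definitions.
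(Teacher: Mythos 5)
Your proposal is correct and matches the paper's own (implicit) argument: the corollary is stated with an immediate \qed precisely because both $\Prism_{(-,-)/A}$ and $\widehat{\Prism}^{\rm nc}_{(-,-)/A}$ are defined as left Kan extensions from ${\rm lPoly}_{A/I}^\wedge$, where Lemma \ref{lem:polynygaard} supplies the equivalence. Nothing further is needed.
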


We now match the various Nygaard filtrations in this picture. The Nygaard filtration ${\rm Fil}^{\ge i}_N \widehat{\Prism}^{\rm nc}_{(R, P) / A}$ is also defined by left Kan extension, so that its Nygaard completion recovers $\widehat{\Prism}_{(R, P) / A}$ for log quasiregular semiperfectoid $A/I$-algebras $(R, P)$. We observe that this coincides with the \emph{a priori} different approach obtained by imitating Definition \ref{def:nygaardfil}, obtained by setting \begin{equation}\label{eq:anothernygaard}\widetilde{{\rm Fil}}_N^{\ge i} \widehat{\Prism}_{(R, P) / A}^{\rm nc} := {\rm Tot}({\rm Fil}^{\ge i}_N \widehat{\Prism}^{\rm nc}_{R\langle P \rangle^{\otimes^{\rm sat} \bullet + 1} / A\langle P_{\rm perf}^{\oplus^{\rm ex} \bullet + 1} \rangle})\end{equation} for $(R, P) \in {\rm lPoly}_{A / I}^\wedge$ and left Kan extending in general. These filtrations coincide: indeed, the saturated descent of \cite[Theorem 4.18]{BLMP24}, and hence that of \cite[Corollary 4.22]{BLMP24}, are compatible with the Nygaard filtrations. 

\begin{lemma}\label{lem:intextnygaard} Let $(S, Q)$ be log quasiregular semiperfectoid. There is a map \[\Prism_{(S, Q)} \to \widehat{\Prism}_{(S, Q)}\] which exhibits the target as the Nygaard-completion of the source. 
\end{lemma}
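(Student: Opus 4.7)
The strategy is to obtain the desired comparison by Nygaard-completing a \emph{filtered} enhancement of the equivalence from Corollary \ref{cor:pcompleteani}. That corollary produces an equivalence $\Prism_{(S,Q)/A}\simeq \widehat{\Prism}^{\rm nc}_{(S,Q)/A}$ on the underlying objects. If I can promote this to an equivalence of filtered objects with respect to the two Nygaard filtrations, then passing to the Nygaard completion of the right-hand side will yield the desired map, provided I can also identify the Nygaard completion of $\widehat{\Prism}^{\rm nc}_{(S,Q)/A}$ with $\widehat{\Prism}_{(S,Q)/A}$.

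The matching of the two filtrations is the key technical step. By Definition \ref{def:nygaardfil}, the Nygaard filtration on $\Prism_{(S,Q)/A}$ arises (after left Kan extension) as the totalization of the levelwise Nygaard filtrations on a saturated-descent cosimplicial diagram of the form $\mathrm{Fil}^{\ge i}_N \Prism_{R\langle P\rangle^{\otimes^{\rm sat}\bullet+1}/A\langle P_{\rm perf}^{\oplus^{\rm ex}\bullet+1}\rangle}$. The discussion surrounding \eqref{eq:anothernygaard} shows that the analogously-defined saturated-descent filtration on $\widehat{\Prism}^{\rm nc}_{(S,Q)/A}$ agrees with the one obtained by direct left Kan extension. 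Hence both filtrations are computed by totalizations of cosimplicial diagrams of the same shape, and the filtered equivalence reduces to the non-log statement \cite[Theorem 13.1]{BS22}, applied levelwise to the quasiregular semiperfectoid rings appearing in the descent diagram.

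To finish, I would identify the Nygaard completion of $\widehat{\Prism}^{\rm nc}_{(S,Q)/A}$ with $\widehat{\Prism}_{(S,Q)/A}$. This should follow from the definition of $\widehat{\Prism}^{\rm nc}$ as the left Kan extension of $\widehat{\Prism}$ from $p$-complete polynomial pre-log rings (where the two already agree by construction) combined with quasisyntomic descent for $\widehat{\Prism}$, evaluated at the log quasiregular semiperfectoid input $(S,Q)$. Concretely, one expresses $\widehat{\Prism}_{(S,Q)/A}$ via descent from a quasisyntomic cover by perfectoids, matches the graded pieces of its Nygaard filtration against those of the left-Kan-extended version using saturated descent for the log cotangent complex, and deduces the identification after Nygaard completion.

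The principal obstacle I foresee is precisely the filtered enhancement in the second paragraph: Nygaard completion does not commute with arbitrary colimits, so the filtered comparison is not formal from the underlying one. The decisive ingredient is that both filtrations admit a common saturated-descent presentation compatible with the equivalences \eqref{eq:botheqs}, which reduces the matching of filtrations to the classical non-log statement \cite[Theorem 13.1]{BS22}. Once the filtered equivalence is in hand, the rest of the argument is either formal or follows from results already established in \cite{BLMP24} and the preceding sections.
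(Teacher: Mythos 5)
Your proposal follows essentially the same route as the paper: invoke Corollary \ref{cor:pcompleteani} for the underlying equivalence, match the filtration of Definition \ref{def:nygaardfil} against \eqref{eq:anothernygaard} via their common saturated-descent presentation (levelwise reducing to \cite[Theorem 13.1]{BS22}), and conclude by noting that the left-Kan-extended Nygaard filtration on $\widehat{\Prism}^{\rm nc}$ has completion $\widehat{\Prism}$ by construction. The argument is correct and no genuinely different ideas are introduced.
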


\begin{proof} Let $(A, I)$ be a perfect prism with a map $A/I \to S$. By Corollary \ref{cor:pcompleteani}, there is an equivalence $\Prism_{(S, Q)} \xrightarrow{\simeq} \widehat{\Prism}_{(S, Q) / A}^{\rm nc}$. By construction, the filtration of Definition \ref{def:nygaardfil} is compatible with \eqref{eq:anothernygaard}. As noted above, the latter filtration coincides with the natural filtration ${\rm Fil}^{\ge i}_N \widehat{\Prism}^{\rm nc}_{(R, P) / A}$ for which the canonical map $\widehat{\Prism}^{\rm nc}_{(R, P) / A} \to \widehat{\Prism}_{(R, P) / A}$ is a completion by construction. This concludes the proof. 
\end{proof}

We expect there to be a variant of Lemma \ref{lem:intextnygaard} as functorial as \cite[Theorem 13.1]{BS22}. This would require a version of \cite[Lemma 13.2]{BS22} in the log setting, cf.\ \cite[Remark 4.15]{KY23}. 

\begin{proof}[Proof of Theorem \ref{thm:nygaard}] This follows from Lemma \ref{lem:intextnygaard} by unfolding \cite[Theorem 4.22]{BLPO23Prism}.
\end{proof}

\subsection{Comparison of Nygaard filtrations}\label{sec:nygaardcomp} We proceed to verify that the Nygaard filtration of Definition \ref{def:nygaardfil} indeed recovers that pursued by Koshikawa--Yao \cite[Definition 5.13, Section 5.5]{KY23}. Let us first recall the construction of the Nygaard filtration of \emph{loc.\ cit.}\ in a language more compatible with the present exposition:

\begin{construction} Let $(A, I)$ be a perfect prism and let $(R, P) \in {\rm lPoly}^\wedge_{A/I}$. There is a chain of equivalences \begin{equation}\label{eq:chainofeqs}\begin{tikzcd}[column sep = tiny, row sep = tiny]\Prism_{(R, P) / A}  \ar{d}{\simeq}  \\ \lim_\Delta \Prism_{(R \widehat{\otimes}_{A/I\langle P \rangle} A/I\langle P_{\rm perf}^{\oplus_P \bullet + 1} \rangle, P_{\rm perf}^{\oplus_P \bullet + 1}) / A} \ar{d}{\simeq}  \\ \lim_\Delta \Prism_{(R \widehat{\otimes}_{A/I\langle P \rangle} A/I\langle P_{\rm perf}^{\oplus_P \bullet + 1} \rangle, P_{\rm perf}^{\oplus_P \bullet + 1}) / (A\langle P_{\rm perf}^{\oplus \bullet + 1} \rangle, P_{\rm perf}^{\oplus \bullet + 1})} \ar{d}{\simeq} \\ \lim_\Delta  \Prism_{(R \widehat{\otimes}_{A/I\langle P \rangle} A/I\langle (P_{\rm perf}^{\oplus_P \bullet + 1} \rangle, P_{\rm perf}^{\oplus_P \bullet + 1}) / (A\langle (P_{\rm perf}^{\oplus \bullet + 1})^{\rm ex} \rangle, (P_{\rm perf}^{\oplus \bullet + 1})^{\rm ex})} \\ \lim_\Delta \Prism_{R \widehat{\otimes}_{A/I\langle P \rangle} A/I\langle (P_{\rm perf}^{\oplus_P \bullet + 1} \rangle / A\langle (P_{\rm perf}^{\oplus \bullet + 1})^{\rm ex} \rangle} \ar[swap]{u}{\simeq}\end{tikzcd}\end{equation} of Frobenius-equivariant $(p, I)$-complete objects in ${\cal D}(A)$. Indeed, the derived Hodge--Tate comparison \cite[Proposition 4.5]{KY23} reduces this to a question of log cotangent complexes, and in this case the equivalences can be established in a manner analogous to e.g. \cite[Proposition 4.20]{BLPO23Prism}. More explicitly, the equivalences come to life as follows:
\begin{enumerate}
\item The first equivalence from the top is quasisyntomic descent;
\item the second equivalence from the top is independence of (levelwise) perfectoid mapping to the levelwise quasiregular semiperfectoid \[(R \widehat{\otimes}_{A/I\langle P \rangle} A/I\langle P_{\rm perf}^{\oplus_P \bullet + 1} \rangle, P_{\rm perf}^{\oplus_P \bullet + 1})\] as well as the insensitivity of the log cotangent complex to perfect monoids (e.g. \cite[Corollary 4.18]{BLPO23Prism}); 
\item the bottom chain of equivalences is \cite[Proposition 4.20]{BLPO23Prism}, where $(P_{\rm perf}^{\oplus \bullet + 1})^{\rm ex}$ denotes the exactification of the canonical map $P_{\rm perf}^{\oplus \bullet + 1} \to P_{\rm perf}^{\oplus_P \bullet + 1}$. 
\end{enumerate}

\noindent Motivated by a similar analysis in their setting, Koshikawa--Yao define the (derived) Nygaard filtration \begin{equation}\label{eq:kynygaard}{\rm Tot}({\rm Fil}^{\ge i}_N\Prism_{R \widehat{\otimes}_{A/I\langle P \rangle} A/I\langle (P_{\rm perf}^{\oplus_P \bullet + 1} \rangle / A\langle (P_{\rm perf}^{\oplus \bullet + 1})^{\rm ex} \rangle}^{(1)}).\end{equation} We observe that the Frobenius-twist is essential, as the ``exactified ring'' $A\langle (P_{\rm perf}^{\oplus \bullet + 1})^{\rm ex} \rangle$ will in general not be perfectoid (see e.g.\ \cite[Examples 4.21, 4.22]{BLPO23Prism}). We also note that there is an explicit comparison map \begin{equation}\label{eq:comparison}\Prism_{R \widehat{\otimes}_{A/I\langle P \rangle} A/I\langle (P_{\rm perf}^{\oplus_P \bullet + 1} \rangle / A\langle (P_{\rm perf}^{\oplus \bullet + 1})^{\rm ex} \rangle} \xrightarrow{} \Prism_{R\langle P \rangle^{\otimes^{\rm sat} \bullet + 1} / A\langle P_{\rm perf}^{\oplus^{\rm ex} \bullet + 1} \rangle}\end{equation} which is an equivalence by the proof of Theorem \ref{thm:prisminftyroot} and the equivalences \eqref{eq:chainofeqs}. 
\end{construction}

Koshikawa--Yao \cite[Lemma 5.12 and the ensuing discussion]{KY23} show that the Frobenius-twisted log prismatic cohomology $\Prism_{R \widehat{\otimes}_{A/I\langle P \rangle} A/I\langle (P_{\rm perf}^{\oplus_P \bullet + 1} \rangle / A\langle (P_{\rm perf}^{\oplus \bullet + 1})^{\rm ex} \rangle}^{(1)}$ may be computed as $\Prism_{R \widehat{\otimes}_{A/I\langle P \rangle} A/I\langle (P_{\rm perf}^{\oplus_P \bullet + 1} \rangle / A\langle (P_{\rm perf}^{\oplus \bullet + 1})^{\rm ex} \rangle} \otimes_{A, \phi} A$. Hence the Nygaard filtration of \eqref{eq:kynygaard} may be considered one of $\Prism^{(1)}_{(R, P) / A}$ by \eqref{eq:chainofeqs}. 

\begin{proposition}\label{prop:nygaardcomp} After base-change along the Frobenius on $A$, the map \eqref{eq:comparison} is an equivalence \[\Prism_{R \widehat{\otimes}_{A/I\langle P \rangle} A/I\langle (P_{\rm perf}^{\oplus_P \bullet + 1} \rangle / A\langle (P_{\rm perf}^{\oplus \bullet + 1})^{\rm ex} \rangle} \otimes_{A, \phi} A \xrightarrow{} \Prism_{R\langle P \rangle^{\otimes^{\rm sat} \bullet + 1} / A\langle P_{\rm perf}^{\oplus^{\rm ex} \bullet + 1} \rangle} \otimes_{A, \phi} A\] of filtered objects, where the source carries the filtration \eqref{eq:kynygaard}, while the target carries that of Definition \ref{def:nygaardfil}. 
\end{proposition}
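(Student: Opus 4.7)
The underlying equivalence of the comparison map \eqref{eq:comparison}, after Frobenius base-change, has already been established in the proof of Theorem \ref{thm:prisminftyroot} together with the chain \eqref{eq:chainofeqs}. The plan is to upgrade this to a filtered equivalence by a levelwise check in the cosimplicial direction: both filtrations appearing in the statement are defined as totalizations of cosimplicial filtered objects, so it suffices to match the filtrations at each cosimplicial degree $n$, and then appeal to the fact that totalizations of filtered objects assemble to the totalization filtration.

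Fix such an $n$. The target level $\Prism^{(1)}_{R\langle P\rangle^{\otimes^{\rm sat} n+1}/A\langle P_{\rm perf}^{\oplus^{\rm ex} n+1}\rangle}$ carries the intrinsic Bhatt--Morrow--Scholze Nygaard filtration on the Frobenius twist of the prismatic cohomology of a quasiregular semiperfectoid ring relative to a base which, after base-change along $\phi$, is perfectoid by \cite[Lemma 5.12]{KY23}. The source level carries the Koshikawa--Yao Nygaard filtration on the Frobenius twist of the \emph{log} prismatic cohomology of the log quasiregular semiperfectoid input $(R\widehat{\otimes}_{A/I\langle P\rangle}A/I\langle P_{\rm perf}^{\oplus_P n+1}\rangle,\, P_{\rm perf}^{\oplus_P n+1})$ relative to the same exactified base. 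Since the monoid $P_{\rm perf}^{\oplus_P n+1}$ is perfect (being $p$-divisible, as $P_{\rm perf}$ is), the $p$-complete log cotangent complex is insensitive to the log structure by \cite[Proposition 4.7]{BLMP24}; via the derived log Hodge--Tate comparison \cite[Proposition 4.5]{KY23} and its ordinary prismatic counterpart, this insensitivity propagates to every graded piece of the respective Nygaard filtrations.

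To conclude, I would argue that both filtrations are pinned down by the same intrinsic characterization of the Nygaard filtration on the Frobenius twist of (log) prismatic cohomology of a (log) qrsp input: once the log structure has been shown to contribute nothing (as above), this characterization reduces to the standard one of \cite{BMS19} for the underlying non-log qrsp, and the filtered equivalence becomes automatic from the corresponding underlying equivalence. The main obstacle is notational bookkeeping: tracking how the source and target exactified bases $A\langle (P_{\rm perf}^{\oplus n+1})^{\rm ex}\rangle$ and their Frobenius twists assemble cosimplicially, and verifying that the underlying equivalence \eqref{eq:comparison} is induced by morphisms of prismatic data which respect the candidate filtrations \emph{before} passage to graded pieces, not merely afterwards. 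Once this coherence is checked, the proposition follows from uniqueness of the intrinsic Nygaard filtration in the qrsp-over-perfectoid setting.
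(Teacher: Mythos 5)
Your overall strategy---establish that the map is filtered essentially by construction, then reduce the equivalence to a graded-piece statement, which the derived Hodge--Tate comparison converts into a statement about (log) cotangent complexes where semiperfectness of the monoids removes the log structure---matches the paper's proof, and your levelwise-in-the-cosimplicial-direction formulation is a harmless repackaging of it. Two of your supporting assertions are off, however. First, it is the \emph{source} base $A\langle (P_{\rm perf}^{\oplus \bullet+1})^{\rm ex}\rangle$ (the exactification of $P_{\rm perf}^{\oplus\bullet+1}\to P_{\rm perf}^{\oplus_P\bullet+1}$) that fails to be perfectoid and forces the Frobenius twist via \cite[Lemma 5.12]{KY23}; the target base $A\langle P_{\rm perf}^{\oplus^{\rm ex}\bullet+1}\rangle \cong A\langle P_{\rm perf}\oplus (P_{\rm perf}^{\rm gp})^{\oplus\bullet}\rangle$ is already a perfect prism, so there the ordinary qrsp-over-perfectoid formalism applies directly. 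In particular the two exactified bases are \emph{not} ``the same'': they are exactifications of different maps, and this asymmetry is precisely why the Frobenius twist appears in the statement at all.

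Second, and more substantively, your concluding appeal to ``uniqueness of the intrinsic Nygaard filtration in the qrsp-over-perfectoid setting'' is not an available tool: the source carries Koshikawa--Yao's filtration defined relative to a non-perfectoid base, and there is no abstract characterization that identifies it with the Bhatt--Morrow--Scholze-style filtration on the target for free. The argument closes in a more pedestrian way: both filtrations are by definition totalizations of levelwise Nygaard filtrations and \eqref{eq:comparison} is a levelwise map of such, so the map is filtered by construction; the graded pieces on both sides are then identified with truncated Frobenius-twisted log Hodge--Tate cohomology $\tau_{\le i}\overline{\Prism}_{(R,P)/A}\{i\}$ (this is exactly your ``insensitivity propagates to every graded piece'' step); and finally one invokes \emph{completeness} of both filtrations to promote the graded equivalence to a filtered one. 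You should replace the uniqueness appeal with this completeness observation---without it, a filtered map inducing equivalences on all graded pieces need not be an equivalence of filtered objects.
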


\begin{proof}
By the very construction of the Nygaard filtration of \cite{KY23}, the equivalences \eqref{eq:chainofeqs} are filtered. However, the saturated descent equivalence is also filtered with respect to Definition \ref{def:nygaardfil} by construction, so the map is indeed one of filtered objects. It is an equivalence of such, as for $(R, P) \in {\rm lPoly}_{A/I}^\wedge$, the graded pieces in both cases are given by truncated Frobenius-twisted log Hodge--Tate cohomology $\tau_{\le i} \overline{\Prism}_{(R, P) / A}\{i\}$ (and are thus complete). 
\end{proof}
\section{Log syntomic cohomology as a left Kan extension}\label{sec:logsynleftkan}

In this section, we apply saturated descent for log syntomic cohomology to deduce an analog of \cite[Theorem 5.1(2)]{AMMN22}. Recall that ${\rm lQSyn}^{\rm qsyn}$ denotes the full subcategory of ${\rm lQSyn}$ spanned by those $(S, Q)$ with $S \widehat{\otimes}_{{\Bbb Z}\langle Q \rangle} {\Bbb Z}\langle Q_{\rm perf} \rangle$ quasisyntomic with $Q$ fine, saturated, and $p$-torsionfree. In particular, ${\rm lQSyn}^{\rm qsyn}$ contains ${\rm lPoly}^\wedge_{{\Bbb Z}_p}$.
Recall from \cite[Theorem 1.3(4)]{BLPO23Prism} that ${\Bbb Z}_p(i)(S, Q)$ can be defined in terms of the motivic filtrations on log ${\rm TC}^-$ and ${\rm TP}$.

\begin{theorem}\label{thm:logsynleftkan} The functors $(S, Q) \mapsto {\Bbb Z}_p(i)(S, Q)$ and $(S, Q) \mapsto {\rm Fil}_N^{\ge i}{\rm TC}((S, Q) ; {\Bbb Z}_p)$ on ${\rm lQSyn}^{\rm qsyn}$ are left Kan extended from $p$-complete polynomial pre-log rings.
\end{theorem}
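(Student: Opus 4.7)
My plan is to reduce the filtration statement to the graded-piece statement via Theorem \ref{thm:logtcleftkan}, and then to reduce the graded-piece statement to the non-log analog \cite[Theorem 5.1(2)]{AMMN22} by means of saturated descent.

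First, I would treat ${\rm Fil}_N^{\ge i}{\rm TC}((-,-); {\Bbb Z}_p)$. The underlying spectrum ${\rm TC}((-,-); {\Bbb Z}_p)$ is already known to be left Kan extended by Theorem \ref{thm:logtcleftkan}. The Nygaard filtration of \cite[Theorem 1.8(4)]{BLPO23Prism} is complete and exhaustive on ${\rm lQSyn}^{\rm fsh}$, with graded pieces ${\rm gr}_N^i \simeq {\Bbb Z}_p(i)(-,-)[2i]$. Combining this with the fiber sequences ${\rm Fil}_N^{\ge i+1} \to {\rm Fil}_N^{\ge i} \to {\rm gr}_N^i$ together with completeness, the filtration statement reduces, by induction, to showing that ${\Bbb Z}_p(i)(-,-)$ is left Kan extended from $p$-complete polynomial pre-log rings.

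Next, for ${\Bbb Z}_p(i)(-,-)$, I would apply the filtered analog of Proposition \ref{prop:logtcsatdesc} (established by arguments parallel to those of Lemma \ref{lem:thhsatdesc}, reducing to saturated descent for the log cotangent complex \cite[Theorem 4.12]{BLMP24} on graded pieces). This yields the equivalence
\[
{\Bbb Z}_p(i)(S, Q) \simeq \lim_\Delta {\Bbb Z}_p(i)\bigl(S \widehat{\otimes}_{{\Bbb Z}_p\langle Q \rangle} {\Bbb Z}_p\langle Q_{\rm perf}^{\oplus_Q^{\rm sat} \bullet + 1}\rangle\bigr)
\]
for $(S, Q) \in {\rm lQSyn}^{\rm fsh}$. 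Each term in the totalization is an ordinary quasisyntomic ${\Bbb Z}_p$-algebra, to which \cite[Theorem 5.1(2)]{AMMN22} applies. By Corollary \ref{cor:freecechnerve}, the analogous saturated descent formula holds for $p$-complete polynomial pre-log rings, so that the comparison map from the left Kan extension to ${\Bbb Z}_p(i)(S,Q)$ can be analyzed term-by-term on the Cech nerve.

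The main obstacle I anticipate is the commutation of the totalization (arising from saturated descent) with the sifted colimit defining left Kan extension. To address this, I would exploit the uniform Tor-amplitude bounds for the non-log ${\Bbb Z}_p(i)$ on the quasiregular semiperfectoid terms of the saturated Cech nerve, following the strategy of \cite[Section 7]{BMS19} in the non-log setting. This uniform control allows the required interchange of limit and sifted colimit, completes the argument for ${\Bbb Z}_p(i)(-,-)$, and then, via the reduction of the first paragraph, also yields the statement for ${\rm Fil}_N^{\ge i}{\rm TC}((-,-); {\Bbb Z}_p)$.
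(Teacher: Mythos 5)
Your overall strategy matches the paper's: both arguments run saturated descent for ${\Bbb Z}_p(i)(-,-)$ so as to reduce to the non-logarithmic statement of \cite[Theorem 5.1(2)]{AMMN22} applied to the terms of the saturated \v{C}ech nerve, and both deduce the ${\rm Fil}_N^{\ge i}{\rm TC}$ statement from the ${\Bbb Z}_p(i)$ statement together with Theorem \ref{thm:logtcleftkan}. The gap lies in the step you compress into ``the comparison map \dots can be analyzed term-by-term on the \v{C}ech nerve.'' For this to have any content, you must first know that for a simplicial resolution $(S_\bullet, Q_\bullet) \to (S,Q)$ by $p$-complete polynomial pre-log rings, realizing the levelwise-formed saturated \v{C}ech nerve recovers the saturated \v{C}ech nerve of $(S,Q)$; that is, that the canonical map
\[
S_{\bullet} \widehat{\otimes}_{{\Bbb Z}_p\langle Q_{\bullet} \rangle} {\Bbb Z}_p\langle Q_{\bullet, {\rm perf}}^{\oplus_{Q_{\bullet}}^{{\rm sat}, {\bullet + 1}}} \rangle \xrightarrow{}  S \widehat{\otimes}_{{\Bbb Z}_p\langle Q \rangle} {\Bbb Z}_p\langle Q_{{\rm perf}}^{\oplus_{Q}^{{\rm sat}, {\bullet + 1}}} \rangle
\]
is an equivalence. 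This is not formal, and it is exactly where the paper spends its effort: it uses the ${\rm fsh}$ hypothesis ($p$-complete flatness of ${\Bbb Z}_p\langle Q\rangle \to S$), the splitting of the saturated coproducts as $Q_{\rm perf}\oplus(Q_{\rm perf}^{\rm gp}/Q^{\rm gp})^{\oplus\bullet}$, and the observation (Lemma \ref{lem:levelwiseperf}) that direct limit perfection, being a filtered colimit, commutes with the realization of the resolution. Without this input, \cite[Theorem 5.1(2)]{AMMN22} says nothing about your comparison map, since the left Kan extension is computed from resolutions of $(S,Q)$ and not from resolutions of the individual \v{C}ech terms.

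A secondary issue: your proposed justification of the limit/colimit interchange invokes ``uniform Tor-amplitude bounds on the quasiregular semiperfectoid terms of the saturated \v{C}ech nerve,'' but these terms are not quasiregular semiperfectoid in general --- e.g.\ ${\Bbb Z}_p\langle t^{1/p^\infty}\rangle$ is merely quasisyntomic over ${\Bbb Z}_p$, as you yourself note earlier in the same paragraph --- so the bounds you want are not directly available and would require a further unfolding to genuinely semiperfectoid covers. The paper does not argue via connectivity estimates at this point: once the displayed equivalence above is established, it concludes by noting that the extended non-logarithmic functor \eqref{eq:syntomicani} commutes with sifted colimits and that the formation of each term of the saturated descent diagram does as well. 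If you want to retain your connectivity-based interchange, you need to formulate and prove the relevant coconnectivity of ${\Bbb Z}_p(i)$ uniformly across the (merely quasisyntomic) \v{C}ech terms; as written, that step does not go through.
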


Recall that, using the analogous statement in the non-log setting, one extends ${\Bbb Z}_p(i)(-)$ to a functor \begin{equation}\label{eq:syntomicani} {\Bbb Z}_p(i)(-) \colon {\rm Ani}({\rm CRing}^\wedge_{{\Bbb Z}_p}) \to D({\Bbb Z}_p)\end{equation} on $p$-complete animated ${\Bbb Z}_p$-algebras. In order for this construction to play well with our application of saturated descent, we need the following elementary observation:

\begin{lemma}\label{lem:levelwiseperf} Let $Q$ be a commutative monoid and let $Q_{\bullet} \xrightarrow{\simeq} Q$ be a simplicial resolution of $Q$ by free commutative monoids. The induced map \[Q_{\bullet, {\rm perf}} \xrightarrow{\simeq} Q_{\rm perf}\] is still an equivalence, where $Q_{\bullet, {\rm perf}}$ is the levelwise direct limit perfection of $Q_{\bullet}$. 
\end{lemma}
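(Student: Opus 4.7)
The plan is to reduce the statement to a formal interchange of colimits, using the description of the perfection of a commutative monoid as a sequential colimit along the $p$-th power map.

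First, I would recall that for any commutative monoid $N$, perfection admits the functorial presentation
$$N_{\rm perf} \;\simeq\; \colim\bigl(N \xrightarrow{(-)^p} N \xrightarrow{(-)^p} N \xrightarrow{(-)^p} \cdots\bigr),$$
which exhibits $(-)_{\rm perf}$ as a left adjoint to the inclusion of perfect commutative monoids into all commutative monoids. Both the hypothesis and the conclusion of the lemma are to be read as equivalences of animated commutative monoids under geometric realization: namely, the resolution assumption says that $|Q_\bullet| \simeq Q$ in ${\rm Ani}({\rm CMon})$, and what one wants is $|Q_{\bullet, \rm perf}| \simeq Q_{\rm perf}$.

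The main step is then to apply the presentation above levelwise and interchange the two colimits:
$$|Q_{\bullet, \rm perf}| \;\simeq\; \bigl|\,\colim_{n \in \bN} Q_\bullet\,\bigr| \;\simeq\; \colim_{n \in \bN} |Q_\bullet| \;\simeq\; \colim_{n \in \bN} Q \;=\; Q_{\rm perf},$$
where the middle equivalence is just the commutation of two colimits in the bicomplete $\infty$-category ${\rm Ani}({\rm CMon})$, and the penultimate equivalence is the resolution hypothesis. Equivalently, one can argue directly that the left adjoint $(-)_{\rm perf}$ preserves geometric realizations, so that $|Q_{\bullet, \rm perf}| \simeq (|Q_\bullet|)_{\rm perf} \simeq Q_{\rm perf}$.

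I do not foresee a substantial obstacle: the statement is a formal consequence of the left-adjointness of perfection on commutative monoids. The only point one might want to double-check is that the perfection of the discrete monoid $Q$ coincides with its animated (left-Kan-extended) perfection, but this is precisely what the filtered-colimit formula guarantees, since filtered colimits of discrete commutative monoids remain discrete.
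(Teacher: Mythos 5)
Your argument is correct and is essentially the paper's own proof, which consists of the single observation that the direct limit perfection is a filtered colimit and hence commutes with geometric realization. The additional remarks about left-adjointness and discreteness are fine but not needed beyond this commutation of colimits.
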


\begin{proof} The direct limit perfection is a filtered colimit. 
\end{proof}

\begin{proof}[Proof of Theorem \ref{thm:logsynleftkan}] Let us first apply saturated descent for ${\Bbb Z}_p(i)(-, -)$ to obtain an equivalence \[{\Bbb Z}_p(i)(S, Q) \xrightarrow{\simeq} {\rm Tot}(\!\!\!\begin{tikzcd}[column sep = tiny] {\Bbb Z}_p(i)(S \widehat{\otimes}_{{\Bbb Z}_p \langle Q \rangle} {\Bbb Z}_p\langle Q_{\rm perf} \rangle) \ar[shift left = 0.5]{r} \ar[shift right = 0.5]{r} \!\! &\!  {\Bbb Z}_p(i)(S \widehat{\otimes}_{{\Bbb Z}_p \langle Q \rangle} {\Bbb Z}_p \langle Q_{\rm perf} \oplus_Q^{\rm sat} Q_{\rm perf}\rangle) \cdots \end{tikzcd}\!\!\!).\] Suppose now that $(S_{\bullet}, Q_{\bullet}) \xrightarrow{\simeq} (S, Q)$ is a simplicial resolution of $(S, Q)$ that is levelwise a $p$-complete polynomial pre-log ${\Bbb Z}_p$-algebra. We then observe that the canonical map $S_{\bullet} \widehat{\otimes}_{{\Bbb Z}_p \langle Q_{\bullet} \rangle} {\Bbb Z}_p \langle Q_{\bullet, {\rm perf}} \rangle \xrightarrow{} S \widehat{\otimes}_{{\Bbb Z}_p \langle Q \rangle} {\Bbb Z}_p \langle Q_{\rm perf} \rangle$ is an equivalence, by Lemma \ref{lem:levelwiseperf} and using $p$-complete flatness of ${\Bbb Z}_p \langle Q \rangle \to S$. More generally, we claim that the canonical map \begin{equation}\label{eq:notpoly} S_{\bullet} \widehat{\otimes}_{{\Bbb Z}_p\langle Q_{\bullet} \rangle} {\Bbb Z}_p\langle Q_{\bullet, {\rm perf}}^{\oplus_{Q_{\bullet}}^{{\rm sat}, {\bullet + 1}}} \rangle \xrightarrow{}  S \widehat{\otimes}_{{\Bbb Z}_p\langle Q \rangle} {\Bbb Z}_p\langle Q_{{\rm perf}}^{\oplus_{Q}^{{\rm sat}, {\bullet + 1}}} \rangle \end{equation} is an equivalence, where the saturation on the source is formed levelwise. In order to see this, we combine the already established equivalence in cosimplicial degree zero with the isomorphisms $Q_{\rm perf}^{\oplus^{\rm sat, {\bullet + 1}}} \cong Q_{\rm perf} \oplus (Q_{\rm perf}^{\rm gp}/Q^{\rm gp})^{\oplus \bullet}$ and the analogous one on the resolutions. This rewrites the map \eqref{eq:notpoly} as \begin{equation}\label{eq:notpoly2} \begin{tikzcd}[row sep = small] (S_{\bullet} \widehat{\otimes}_{{\Bbb Z}_p\langle Q_{\bullet} \rangle} {{\Bbb Z}_p\langle  Q_{\bullet, {\rm perf}}\rangle}) \widehat{\otimes}_{{\Bbb Z}_p} {\Bbb Z}_p \langle  Q^{\rm gp}_{\bullet, {\rm perf}} / Q^{\rm gp}_{\bullet} \rangle \ar{d}{\simeq} \\ (S \widehat{\otimes}_{{\Bbb Z}_p\langle Q \rangle} {{\Bbb Z}_p\langle  Q_{{\rm perf}}\rangle}) \widehat{\otimes}_{{\Bbb Z}_p} {\Bbb Z}_p \langle  Q^{\rm gp}_{{\rm perf}} / Q^{\rm gp} \rangle \end{tikzcd}\end{equation} From this, the equivalence \eqref{eq:notpoly} follows from the equivalence $Q_{\bullet, {\rm perf}}^{\rm gp}/Q_{\bullet}^{\rm gp} \to Q^{\rm gp}_{\rm perf}/Q^{\rm gp}$ obtained from Lemma \ref{lem:levelwiseperf}. 

Applying the functor \eqref{eq:syntomicani} to the equivalence \eqref{eq:notpoly}, we thus obtain a chain of equivalences \[\begin{tikzcd}[row sep = tiny]{\Bbb Z}_p(i)(S, Q) \ar{d}{\simeq} \\ \lim_{\Delta}( {\Bbb Z}_p(i)(S \widehat{\otimes}_{{\Bbb Z}_p\langle Q \rangle} {\Bbb Z}_p\langle Q_{{\rm perf}}^{\oplus_{Q}^{{\rm sat}, {\bullet + 1}}} \rangle)) \\ \lim_{\Delta}({\Bbb Z}_p(i)(S_{\bullet} \widehat{\otimes}_{{\Bbb Z}_p\langle Q_{\bullet} \rangle} {\Bbb Z}_p\langle Q_{\bullet, {\rm perf}}^{\oplus_{Q_{\bullet}}^{{\rm sat}, {\bullet + 1}}} \rangle ))\ar[swap]{u}{\simeq}\end{tikzcd}\] and the target is levelwise equivalent to ${\Bbb Z}_p(i)(S_{\bullet}, Q_{\bullet})$ by saturated descent. Here, ${\Bbb Z}_p(i)(S_{\bullet}, Q_{\bullet})$ is merely the levelwise application of ${\Bbb Z}_p(i)(-, -)$ on the simplicial pre-log ring $(S_{\bullet}, Q_{\bullet})$.  

So far, we have proven that ${\Bbb Z}_p(i)(S, Q)$ can be computed in terms of any simplicial resolution $(S_{\bullet}, Q_{\bullet})$ of $p$-complete polynomial pre-log ${\Bbb Z}_p$-algebras. To argue that the construction $(S_{\bullet}, Q_{\bullet}) \mapsto {\Bbb Z}_p(i)(S_{\bullet}, Q_{\bullet})$ commutes with sifted colimits of such, we once again appeal to saturated descent: The functor \eqref{eq:syntomicani} commutes with sifted colimits, and so does the formation of each term in the saturated \v{C}ech nerve. Indeed, the construction $Q \mapsto Q_{\rm perf}^{\oplus_Q^{\rm sat} n + 1} \cong Q_{\rm perf} \oplus (Q_{\rm perf}^{\rm gp} / Q^{\rm gp})^{\oplus n}$ commutes with colimits, and so does the base-change. We use \cite[Proof of Lemma 4.9]{BLMP24} to commute the cosimplicial limit past the sifted colimit: \emph{Loc.\ cit.}\ gives the analogous assertion for the cotangent complex, from which we deduce it for prismatic cohomology and its Nygaard filtration, and hence for syntomic cohomology. This concludes the proof of the statement about ${\Bbb Z}_p(i)(-, -)$, while that for ${\rm Fil}_N^{\ge i}{\rm TC}((-, -) ; {\Bbb Z}_p)$ follows from Theorem \ref{thm:logtcleftkan}. 
\end{proof}

\section{A log variant of the Beilinson fiber square}\label{sec:logbeilinson} In this final section, we prove Theorems \ref{thm:logbeilinson} and \ref{thm:diaoyao} from the  introduction.  Recall from \cite[Section 2]{AMMN22} (and in particular \cite[Proof of Theorem 2.12]{AMMN22}) that the ${\rm TC}$-variant of the Beilinson fiber square comes to life by studying the effect of the map $X \otimes_{\Bbb S} {\Bbb Z}^{\rm triv} \to X \otimes_{{\Bbb S}} {\rm THH}({\Bbb F}_p)$ on ${\rm TC}$ and ${\rm TC}^-$ (cf.\ \cite[Proposition 2.8]{AMMN22}). Unsurprisingly, the proof of Theorem \ref{thm:logbeilinson} arises from their results applied to the cyclotomic spectrum $X = {\rm THH}(R, P)$. To this end, we begin with the following analog of \cite[Theorem 2.12]{AMMN22}:

\begin{proposition}\label{prop:prelimsquare} Let $(R, P)$ be a pre-log ring. There is a square of the form  \[\begin{tikzcd}{\rm TC}((R, P) ; {\Bbb Z}_p) \ar{r} \ar{d} & {\rm TC}((R \otimes_{\Bbb S} {\Bbb F}_p, P) ; {\Bbb Z}_p) \ar{d} \\ {\rm HC}^-((R, P) ; {\Bbb Z}_p) \ar{r} & {\rm HP}((R, P) ; {\Bbb Z}_p).\end{tikzcd}\] which becomes cartesian after inverting $p$. 
\end{proposition}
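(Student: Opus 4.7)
The plan is to apply the non-logarithmic results of \cite[Section 2]{AMMN22} to the cyclotomic spectrum $X = {\rm THH}(R, P)$, and then linearize the resulting diagram.

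The key preliminary observation is the base-change identity
\[{\rm THH}(R, P) \otimes_{\Bbb S} {\rm THH}({\Bbb F}_p) \simeq {\rm THH}(R \otimes_{\Bbb S} {\Bbb F}_p, P)\]
as cyclotomic ${\Bbb E}_{\infty}$-rings. Using the definition of log ${\rm THH}$ as the relative tensor product ${\rm THH}(R) \otimes_{{\Bbb S}[B^{\rm cyc}(P)]} {\Bbb S}[B^{\rm rep}(P)]$, this reduces to commuting the smash product with ${\rm THH}({\Bbb F}_p)$ past the relative tensor product and invoking the standard symmetric monoidality ${\rm THH}(R) \otimes_{\Bbb S} {\rm THH}({\Bbb F}_p) \simeq {\rm THH}(R \otimes_{\Bbb S} {\Bbb F}_p)$ of ${\rm THH}$. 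Since the ${\Bbb S}[B^{\rm cyc}(P)]$- and ${\Bbb S}[B^{\rm rep}(P)]$-structures depend only on $P$, and both are cyclotomic via the usual machinery recalled in \cite[Construction 3.9]{BLPO23Prism}, this identification upgrades to one of cyclotomic ${\Bbb E}_{\infty}$-rings.

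Given this, applying \cite[Theorem 2.12]{AMMN22} to the $p$-complete cyclotomic spectrum $X = {\rm THH}(R, P)$ produces a natural square
\[\begin{tikzcd}{\rm TC}((R, P); {\Bbb Z}_p) \ar{r} \ar{d} & {\rm TC}((R \otimes_{\Bbb S} {\Bbb F}_p, P); {\Bbb Z}_p) \ar{d} \\ {\rm TC}^-((R, P); {\Bbb Z}_p) \ar{r} & {\rm TP}((R, P); {\Bbb Z}_p)\end{tikzcd}\]
which becomes cartesian after inverting $p$. Composing the two bottom maps with the linearization comparisons ${\rm TC}^- \to {\rm HC}^-$ and ${\rm TP} \to {\rm HP}$ (arising from the map of cyclotomic ${\Bbb E}_{\infty}$-rings ${\rm THH}(R, P) \to {\rm HH}((R, P)/{\Bbb Z}_p)$ induced by the unit ${\Bbb S} \to {\Bbb Z}_p$) gives the square in the statement. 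The cartesian property after inverting $p$ is preserved since the linearization maps are equivalences rationally, which reflects the standard fact that ${\rm THH}$ and ${\rm HH}/{\Bbb Z}_p$ rationally agree and is insensitive to the presence of the log structure, as the latter only enters via the spherical group ring data ${\Bbb S}[B^{\rm cyc}(P)]$ and ${\Bbb S}[B^{\rm rep}(P)]$.

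The main obstacle I expect is the first step, namely ensuring that the base-change equivalence ${\rm THH}(R, P) \otimes_{\Bbb S} {\rm THH}({\Bbb F}_p) \simeq {\rm THH}(R \otimes_{\Bbb S} {\Bbb F}_p, P)$ is compatible with cyclotomic structures — a point that is completely formal in the non-logarithmic setting but requires one to unwind the definition of the cyclotomic structure on log ${\rm THH}$ from \cite[Construction 3.9]{BLPO23Prism}. Once this is settled, the rest of the argument is a direct transcription of \cite[Proof of Theorem 2.12]{AMMN22} coupled with standard rationalization of ${\rm THH}$ versus ${\rm HH}$.
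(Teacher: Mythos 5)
Your overall strategy -- transcribing \cite[Section 2]{AMMN22} with $X = {\rm THH}(R,P)$, starting from the cyclotomic base-change identity ${\rm THH}(R,P)\otimes_{\Bbb S}{\rm THH}({\Bbb F}_p)\simeq {\rm THH}(R\otimes_{\Bbb S}{\Bbb F}_p,P)$ -- is the same as the paper's, and that first step is fine (the paper deduces it from the fact that log ${\rm THH}$ commutes with sifted colimits, Lemma \ref{lem:commuteswithcolims}). The gap is in your last step. You pass from an intermediate square with ${\rm TC}^-((R,P);{\Bbb Z}_p)\to{\rm TP}((R,P);{\Bbb Z}_p)$ in the bottom row to the desired square by composing with the linearization maps ${\rm TC}^-\to{\rm HC}^-$ and ${\rm TP}\to{\rm HP}$, justified by the claim that these are equivalences after inverting $p$. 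This is false. What is true is that ${\rm THH}((R,P);{\Bbb Z}_p)\otimes_{\Bbb S}{\Bbb Z}\to{\rm HH}((R,P);{\Bbb Z}_p)$ is a rational equivalence; but $(-)^{hS^1}$ and $(-)^{tS^1}$ are limits and do not commute with inverting $p$, so this does not pass to ${\rm TC}^-$ and ${\rm TP}$. Concretely, for $R={\Bbb Z}_p$ with trivial $P$ one has $\pi_0{\rm HP}({\Bbb Z}_p;{\Bbb Z}_p)\cong{\Bbb Z}_p$ while $\pi_0{\rm TP}({\Bbb Z}_p;{\Bbb Z}_p)$ is the (large) Breuil--Kisin prism, and the discrepancy persists after inverting $p$. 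A second, related inaccuracy: the square you attribute to \cite[Theorem 2.12]{AMMN22} is not what that reference provides. The general cyclotomic input is \cite[Corollary 2.10]{AMMN22}, whose bottom row is $(X\otimes_{\Bbb S}{\Bbb Z}^{\rm triv})^{hS^1}\to(X\otimes_{\Bbb S}{\Bbb Z}^{\rm triv})^{tS^1}$ (the ${\Bbb Z}$-linearization, not $X^{hS^1}\to X^{tS^1}={\rm TC}^-\to{\rm TP}$) and whose top-left corner is ${\rm TC}(X\otimes_{\Bbb S}{\Bbb Z}^{\rm triv})$, which agrees with ${\rm TC}(X)$ only rationally.

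The fix is exactly how the paper (following AMMN) structures the argument: stack three layers, namely (i) the rational equivalence ${\rm TC}((R,P);{\Bbb Z}_p)\to{\rm TC}({\rm THH}(R,P)\otimes_{\Bbb S}{\Bbb Z}^{\rm triv};{\Bbb Z}_p)$ from \cite[Remark 2.4]{AMMN22}; (ii) the square of \cite[Corollary 2.10]{AMMN22} with bottom row $({\rm THH}((R,P);{\Bbb Z}_p)\otimes_{\Bbb S}{\Bbb Z}^{\rm triv})^{hS^1}\to(\cdots)^{tS^1}$; and (iii) a bottom square comparing this row with ${\rm HC}^-((R,P);{\Bbb Z}_p)\to{\rm HP}((R,P);{\Bbb Z}_p)$. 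For (iii) one does \emph{not} claim the vertical maps are rational equivalences; instead one checks cartesianness on horizontal fibers, which by \cite[Corollary I.4.3]{NS18} are the shifted homotopy orbits $(-)_{hS^1}[1]$. Homotopy orbits are a colimit and hence commute with inverting $p$, so the rational equivalence ${\rm THH}((R,P);{\Bbb Z}_p)\otimes_{\Bbb S}{\Bbb Z}\to{\rm HH}((R,P);{\Bbb Z}_p)$ (which uses \cite[Corollary 3.5]{BLPO23Prism} to identify the target) does the job there. Your proposal is missing precisely this fiber-level argument, and as written the deduction of the final square fails.
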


\begin{proof} This follows from replacing ${\rm THH}(R ; {\Bbb Z}_p)$ with ${\rm THH}((R, P) ; {\Bbb Z}_p)$ in \cite[Proof of Theorem 2.12]{AMMN22} and using basic properties of log ${\rm THH}$ established in \cite{BLPO23Prism}. We spell this out below. 

We build a commutative square \[\begin{tikzcd}{\rm TC}((R, P) ; {\Bbb Z}_p) \ar{d} \\ {\rm TC}({\rm THH}(R, P) \otimes_{\Bbb S} {\Bbb Z}^{\rm triv} ; {\Bbb Z}_p) \ar{r} \ar{d} & {\rm TC}((R \otimes_{{\Bbb S}} {\Bbb F}_p, P) ; {\Bbb Z}_p) \ar{d} \\ ({\rm THH}((R, P) ; {\Bbb Z}_p) \otimes_{{\Bbb S}} {\Bbb Z}^{\rm triv})^{hS^1} \ar{r} \ar{d} & ({\rm THH}((R, P) ; {\Bbb Z}_p) \otimes_{{\Bbb S}} {\Bbb Z}^{\rm triv})^{tS^1} \ar{d} \\ {\rm HC}^-((R, P) ; {\Bbb Z}_p) \ar{r} & {\rm HP}((R, P) ; {\Bbb Z}_p).\end{tikzcd}\] We claim that both squares are cartesian after inverting $p$, and that the top vertical arrow is an equivalence after inverting $p$, yielding the result. 

 We first treat the top square. There is an equivalence \[{\rm TC}((R \otimes_{\Bbb S} {\Bbb F}_p, P) ; {\Bbb Z}_p) \simeq {\rm TC}({\rm THH}(R, P) \otimes_{{\Bbb S}} {\rm THH}({\Bbb F}_p) ; {\Bbb Z}_p)\] by Lemma \ref{lem:commuteswithcolims} (and the identification ${\rm THH}({\Bbb S}) \simeq {\Bbb S}^{\rm triv}$), so the upper square is that of \cite[Corollary 2.10]{AMMN22} (with $X = {\rm THH}(R, P)$). Hence \cite[Corollary 2.10]{AMMN22} applies to say that the upper square is cartesian after inverting $p$. 

We now treat the bottom square. It is induced by the $S^1$-equivariant map ${\rm TC}((R, P) ; {\Bbb Z}_p) \otimes_{\Bbb S} {\Bbb Z}^{\rm triv} \to {\rm HH}((R, P) ; {\Bbb Z}_p)$. For the latter map, we recall that ${\rm THH}(R, P) \otimes_{{\rm THH}({\Bbb Z})} {\Bbb Z} \simeq {\rm HH}(R, P)$ by \cite[Corollary 3.5]{BLPO23Prism}. To see that it becomes cartesian after inverting $p$, we study the induced map of horizontal fibers. By \cite[Corollary I.4.3]{NS18}, this is the map \[({\rm THH}((R, P) ; {\Bbb Z}_p) \otimes_{{\Bbb S}} {\Bbb Z}^{\rm triv})_{hS^1}[1] \to {\rm HH}((R, P) ; {\Bbb Z}_p)_{hS^1}[1].\] But this is an equivalence after inverting $p$ since ${\rm THH}((R, P) ; {\Bbb Z}_p) \otimes_{{\Bbb S}} {\Bbb Z} \to {\rm HH}((R, P) ; {\Bbb Z}_p)$ is. Hence, the lower square is cartesian after inverting $p$.

The top vertical arrow is induced by the map ${\rm THH}((R, P) ; {\Bbb Z}_p) \to {\rm THH}((R, P) ; {\Bbb Z}_p) \otimes_{{\Bbb S}} {\Bbb Z}^{\rm triv}$ of cyclotomic spectra. By \cite[Remark 2.4]{AMMN22}, this map is equivalent to the canonical map ${\rm TC}((R, P) ; {\Bbb Z}_p) \to {\rm TC}((R, P) ; {\Bbb Z}_p) \otimes_{{\Bbb S}} {\Bbb Z}$, which is an equivalence after inverting $p$. This concludes the proof. 
\end{proof}

\begin{proof}[Proof of Theorem \ref{thm:logbeilinson}] By Proposition \ref{prop:prelimsquare}, it suffices to prove that the map \begin{equation}\label{eq:isaquasiisog}{\rm TC}((R \otimes_{{\Bbb S}} {\Bbb F}_p, P) ; {\Bbb Q}_p) \xrightarrow{} {\rm TC}((R/p, P) ; {\Bbb Q}_p)\end{equation} is an equivalence. This will follow once we prove that \[{\rm TC}((R \otimes_{\Bbb S} {\Bbb F}_p, P) ; {\Bbb Z}_p) \xrightarrow{} {\rm TC}((R/p, P) ; {\Bbb Z}_p)\] is a quasi-isogeny of spectra. For this, it suffices to prove that \[{\rm THH}((R \otimes_{\Bbb S} {\Bbb F}_p, P) ; {\Bbb Z}_p) \to {\rm THH}((R/p, P) ; {\Bbb Z}_p)\] is a quasi-isogeny of cyclotomic spectra. This map is the base-change of the quasi-isogeny ${\rm THH}(R \otimes_{\Bbb S} {\Bbb F}_p ; {\Bbb Z}_p) \to {\rm THH}(R/p ; {\Bbb Z}_p)$ along the repletion map ${\Bbb S}_p[B^{\rm cyc}(P)] \to {\Bbb S}_p[B^{\rm rep}(P)]$. Keeping in mind that connectivity in the cyclotomic $t$-structure is checked on underlying spectra, $- \widehat{\otimes}_{{\Bbb S}_p[B^{\rm cyc}(P)]} {\Bbb S}_p[B^{\rm rep}(P)]$ is right $t$-exact, and so \eqref{eq:isaquasiisog} is a quasi-isogeny (cf.\ \cite[Variant 2.4]{AN21} and \cite[Proof of Lemma 3.11]{AMMN22}). 
\end{proof}

\begin{proof}[Proof of Theorem \ref{thm:diaoyao}] By unfolding, it suffices to construct the square for log quasiregular semiperfectoid pre-log rings $(S, Q)$. In this situation, the pullback square \[\begin{tikzcd}{\rm TC}((S, Q) ; {\Bbb Q}_p) \ar{r} \ar{d}& {\rm TC}((S/p, Q) ; {\Bbb Q}_p) \ar{d} \\ {\rm HC}^-((S, Q), {\Bbb Q}_p) \ar{r} & {\rm HP}((S, Q) ; {\Bbb Q}_p)\end{tikzcd}\] of Theorem \ref{thm:logbeilinson} has terms concentrated in even degrees by \cite[Theorems 1.3, 1.8]{BLPO23Prism}. Applying $\tau_{[2i - 1, 2i]}$, then, we obtain the pullback square \begin{equation}\label{hodgepullback}\begin{tikzcd}{\Bbb Q}_p(i)(S, Q) \ar{r} \ar{d} & {\Bbb Q}_p(i)(S/p, Q) \ar{d} \\ ({\widehat{L\Omega}_{(S, Q)/{\Bbb Z}_p}^{\ge i}}) \otimes_{{\Bbb Z}_p} {\Bbb Q}_p \ar{r} & ({\widehat{L\Omega}_{(S, Q)/{\Bbb Z}_p}}) \otimes_{{\Bbb Z}_p} {\Bbb Q}_p,\end{tikzcd}\end{equation} compare \cite[Theorem 13.8]{DY24}. It remains to show that the same is true without Hodge-completion. To do so, we recall that the equivalence ${\rm TC}((S \otimes_{\Bbb S} {\Bbb F}_p, Q), {\Bbb Q}_p) \to {\rm TC}((S/p, Q) ; {\Bbb Z}_p)$ arises from a slightly more structured statement, namely that the map ${\rm TC}((S \otimes_{\Bbb S} {\Bbb F}_p, Q) ; {\Bbb Z}_p) \to {\rm TC}((S/p, Q) ; {\Bbb Z}_p)$ is a quasi-isogeny.  Thus, upon applying $\pi_{2i}$ and \cite[Theorems 1.3, 1.8]{BLPO23Prism}, we obtain a diagram \[\begin{tikzcd}{\Bbb Z}_p(i)(S, Q) \ar[shift left = 1, dashed]{r} & \pi_{2i}{\rm TC}((S \otimes_{\Bbb S} {\Bbb F}_p, Q) ; {\Bbb Z}_p) \ar{l} \ar{r} & \widehat{L\Omega}_{(S, Q) / {\Bbb Z}_p}.\end{tikzcd}\] Here the dashed morphism is a witness that the left-hand solid morphism is a quasi-isogeny, that is, both composites with the left-hand solid morphism are integer multiples of the identity. As ${\Bbb Z}_p(i)(-, -)$ is left Kan extended from finitely generated $p$-complete polynomial pre-log rings by Theorem \ref{thm:logsynleftkan}, we see that the morphism ${\Bbb Z}_p(i)(S, Q) \to \widehat{L\Omega}_{(S, Q) / {\Bbb Z}_p}^{\ge i}$ factors through a morphism ${\Bbb Z}_p(i)(S, Q) \to L\Omega_{(S, Q) / {\Bbb Z}_p}^{\ge i}$, and upon inverting $p$ we obtain a maps \[{\Bbb Q}_p(i)(S, Q) \to (L\Omega^{\ge i}_{(S, Q) / {\Bbb Z}_p}) \otimes_{{\Bbb Z}_p} {\Bbb Q}_p \text{  and  }{\Bbb Q}_p(i)(S/p, Q) \to ({L\Omega_{(S, Q)/{\Bbb Z}_p}}) \otimes_{{\Bbb Z}_p} {\Bbb Q}_p\] which is isomorphic to the vertical maps of \eqref{hodgepullback} once post-composed with the Hodge-completions.

\end{proof}

\begin{bibdiv}
\begin{biblist}

\bib{AMMN22}{article}{
   author={Antieau, Benjamin},
   author={Mathew, Akhil},
   author={Morrow, Matthew},
   author={Nikolaus, Thomas},
   title={On the Beilinson fiber square},
   journal={Duke Math. J.},
   volume={171},
   date={2022},
   number={18},
   pages={3707--3806},
   issn={0012-7094},
   review={\MR{4516307}},
   doi={10.1215/00127094-2022-0037},
}

\bib{Ant19}{article}{
   author={Antieau, Benjamin},
   title={Periodic cyclic homology and derived de Rham cohomology},
   journal={Ann. K-Theory},
   volume={4},
   date={2019},
   number={3},
   pages={505--519},
   issn={2379-1683},
   review={\MR{4043467}},
   doi={10.2140/akt.2019.4.505},
}

\bib{AN21}{article}{
   author={Antieau, Benjamin},
   author={Nikolaus, Thomas},
   title={Cartier modules and cyclotomic spectra},
   journal={J. Amer. Math. Soc.},
   volume={34},
   date={2021},
   number={1},
   pages={1--78},
   issn={0894-0347},
   review={\MR{4188814}},
   doi={10.1090/jams/951},
}

\bib{BLMP24}{misc}{
      author={Binda, Federico},
	author={Lundemo, Tommy},
	author={Merici, Alberto},
	author={Park, Doosung},
       title={Logarithmic prismatic cohomology, motivic sheaves, and comparison theorems},
        date={2024},
        note={\arxivlink{2312.13129}},
}

\bib{BLPO23}{article}{
   author={Binda, Federico},
   author={Lundemo, Tommy},
   author={Park, Doosung},
   author={\O stv\ae r, Paul Arne},
   title={A Hochschild-Kostant-Rosenberg theorem and residue sequences for
   logarithmic Hochschild homology},
   journal={Adv. Math.},
   volume={435},
   date={2023},
   pages={Paper No. 109354, 66},
   issn={0001-8708},
   review={\MR{4659233}},
   doi={10.1016/j.aim.2023.109354},
}

\bib{BLPO23Prism}{article}{
   author={Binda, Federico},
   author={Lundemo, Tommy},
   author={Park, Doosung},
   author={\O stv\ae r, Paul Arne},
   title={Logarithmic prismatic cohomology via logarithmic THH},
   journal={Int. Math. Res. Not. IMRN},
   date={2023},
   number={22},
   pages={19641--19696},
   issn={1073-7928},
   review={\MR{4669811}},
   doi={10.1093/imrn/rnad224},
}

\bib{BMS19}{article}{
   author={Bhatt, Bhargav},
   author={Morrow, Matthew},
   author={Scholze, Peter},
   title={Topological Hochschild homology and integral $p$-adic Hodge
   theory},
   journal={Publ. Math. Inst. Hautes \'Etudes Sci.},
   volume={129},
   date={2019},
   pages={199--310},
   issn={0073-8301},
   review={\MR{3949030}},
   doi={10.1007/s10240-019-00106-9},
}

\bib{BPO24}{misc}{
      author={Binda, Federico},
	author={Park, Doosung},
   	author={\O stv\ae r, Paul Arne},
       title={Logarithmic motivic homotopy theory},
        date={2024},
        note={To appear in Memoirs of the AMS. \arxivlink{2303.02729}},
}

\bib{BS22}{article}{
   author={Bhatt, Bhargav},
   author={Scholze, Peter},
   title={Prisms and prismatic cohomology},
   journal={Ann. of Math. (2)},
   volume={196},
   date={2022},
   number={3},
   pages={1135--1275},
   issn={0003-486X},
   review={\MR{4502597}},
   doi={10.4007/annals.2022.196.3.5},
}

\bib{BEK14}{article}{
author={Bloch, Spencer},
author={Esnault, Hélène},
author = {Kerz, Moritz},
title={$p$-adic deformation of algebraic cycle classes},
journal ={Inventiones Math.},
volume = {195},
date={2014},
pages={673--722},
}

\bib{CMM21}{article}{
   author={Clausen, Dustin},
   author={Mathew, Akhil},
   author={Morrow, Matthew},
   title={$K$-theory and topological cyclic homology of henselian pairs},
   journal={J. Amer. Math. Soc.},
   volume={34},
   date={2021},
   number={2},
   pages={411--473},
   issn={0894-0347},
   review={\MR{4280864}},
   doi={10.1090/jams/961},
}

\bib{DGM13}{book}{
   author={Dundas, Bj\o rn Ian},
   author={Goodwillie, Thomas G.},
   author={McCarthy, Randy},
   title={The local structure of algebraic K-theory},
   series={Algebra and Applications},
   volume={18},
   publisher={Springer-Verlag London, Ltd., London},
   date={2013},
   pages={xvi+435},
   isbn={978-1-4471-4392-5},
   isbn={978-1-4471-4393-2},
   review={\MR{3013261}},
}

\bib{DY24}{misc}{
      author={Diao, Hansheng},
	author={Yao, Zijian}
       title={Logarithmic $A_{\rm inf}$-cohomology},
        date={2024},
        note={\arxivlink{2402.15154}},
}

\bib{HHL23}{misc}{
      author={Hablicsek, Márton},
	author={Herr, Leo}
	author={Leonardi, Francesca}
       title={Logarithmic Hochschild co/homology via formality of derived intersections},
        date={2023},
        note={\arxivlink{2308.09447}},
}

\bib{KS04}{article}{
   author={Kato, Kazuya},
   author={Saito, Takeshi},
   title={On the conductor formula of Bloch},
   journal={Publ. Math. Inst. Hautes \'Etudes Sci.},
   number={100},
   date={2004},
   pages={5--151},
   issn={0073-8301},
   review={\MR{2102698}},
   doi={10.1007/s10240-004-0026-6},
}

\bib{Kos22}{misc}{
      author={Koshikawa, Teruhisa},
       title={Logarithmic Prismatic Cohomology I},
        date={2022},
        note={\arxivlink{2007.14037}},
}

\bib{KY23}{misc}{
      author={Koshikawa, Teruhisa},
	author={Yao, Zijian}
       title={Logarithmic Prismatic Cohomology II},
        date={2023},
        note={\arxivlink{2306.00364}},
}

\bib{KN22}{article}{
   author={Krause, Achim},
   author={Nikolaus, Thomas},
   title={B\"okstedt periodicity and quotients of DVRs},
   journal={Compos. Math.},
   volume={158},
   date={2022},
   number={8},
   pages={1683--1712},
   issn={0010-437X},
   review={\MR{4490929}},
   doi={10.1112/s0010437x22007655},
}

\bib{KP21}{misc}{
      author={Kubrak, Dmitry},
	author={Prikhodko, Artem}
       title={p-adic Hodge theory for Artin stacks},
        date={2021},
        note={\arxivlink{2105.05319}},
}

\bib{Lun21}{article}{
   author={Lundemo, Tommy},
   title={On the relationship between logarithmic TAQ and logarithmic THH},
   journal={Doc. Math.},
   volume={26},
   date={2021},
   pages={1187--1236},
   issn={1431-0635},
   review={\MR{4324464}},
}


\bib{Lun23}{article}{
title={On Deformation Theory in Higher Logarithmig Geometry},
volume={24},
DOI={10.1017/S1474748025000027},
number={4},
journal={J. Inst. Math. Jussieu},
author={Lundemo, Tommy},
year={2025},
pages={1337–-1394},
ISSN = {1474-7480,1475-3030},
review={\MR{4922236}}}

\bib{SAG}{misc}{
      author={Lurie, Jacob},
       title={Spectral Algebraic Geometry},
        date={2018},
        note={Preprint, available on the author's website.},
}

\bib{Mat21}{misc}{
      author={Mathew, A.},
       title={Remarks on p-adic logarithmic cohomology theories},
        date={2021},
        note={Talk at conference `` G\'eom\'etrie arithm\'etique en l’honneur de Luc Illusie'', \url{youtube.com/watch?v=wjQuiZ4qPiU}},
}

\bib{Mon21}{article}{
    AUTHOR = {Mondal, Shubhodip},
     TITLE = {Dieudonn\'e{} theory via cohomology of classifying stacks},
   JOURNAL = {Forum Math. Sigma},
  FJOURNAL = {Forum of Mathematics. Sigma},
    VOLUME = {9},
      YEAR = {2021},
     PAGES = {Paper No. e81, 25},
      ISSN = {2050-5094},
   MRCLASS = {14F30 (11G25 14D23 14L15)},
  MRNUMBER = {4354128},
       DOI = {10.1017/fms.2021.77},
       URL = {https://doi.org/10.1017/fms.2021.77},
}

\bib{NS18}{article}{
   author={Nikolaus, Thomas},
   author={Scholze, Peter},
   title={On topological cyclic homology},
   journal={Acta Math.},
   volume={221},
   date={2018},
   number={2},
   pages={203--409},
   issn={0001-5962},
   review={\MR{3904731}},
   doi={10.4310/ACTA.2018.v221.n2.a1},
}

\bib{Niz08}{article}{
   author={Nizio\l, Wies\l awa},
   title={$K$-theory of log-schemes. I},
   journal={Doc. Math.},
   volume={13},
   date={2008},
   pages={505--551},
   issn={1431-0635},
   review={\MR{2452875}},
}

\bib{Ols}{article}{
   author={Olsson, Martin},
   title={Logarithmic geometry and algebraic stacks},
   language={English, with English and French summaries},
   journal={Ann. Sci. \'Ecole Norm. Sup. (4)},
   volume={36},
   date={2003},
   number={5},
   pages={747--791},
   issn={0012-9593},
   review={\MR{2032986}},
   doi={10.1016/j.ansens.2002.11.001},
}

\bib{Ols24}{misc}{
      author={Olsson, Martin},
       title={Hochschild homology for log schemed},
        date={2024},
        note={\arxivlink{2406.08629}},
}

\bib{Rog09}{article}{
   author={Rognes, John},
   title={Topological logarithmic structures},
   conference={
      title={New topological contexts for Galois theory and algebraic
      geometry (BIRS 2008)},
   },
   book={
      series={Geom. Topol. Monogr.},
      volume={16},
      publisher={Geom. Topol. Publ., Coventry},
   },
   date={2009},
   pages={401--544},
   review={\MR{2544395}},
   doi={10.2140/gtm.2009.16.401},
}

\bib{RSS15}{article}{
   author={Rognes, John},
   author={Sagave, Steffen},
   author={Schlichtkrull, Christian},
   title={Localization sequences for logarithmic topological Hochschild
   homology},
   journal={Math. Ann.},
   volume={363},
   date={2015},
   number={3-4},
   pages={1349--1398},
   issn={0025-5831},
   review={\MR{3412362}},
   doi={10.1007/s00208-015-1202-3},
}

\bib{RSS18}{article}{
   author={Rognes, John},
   author={Sagave, Steffen},
   author={Schlichtkrull, Christian},
   title={Logarithmic topological Hochschild homology of topological
   $K$-theory spectra},
   journal={J. Eur. Math. Soc. (JEMS)},
   volume={20},
   date={2018},
   number={2},
   pages={489--527},
   issn={1435-9855},
   review={\MR{3760301}},
   doi={10.4171/JEMS/772},
}

\bib{SST20}{article}{
   author={Scherotzke, Sarah},
   author={Sibilla, Nicol\`o},
   author={Talpo, Mattia},
   title={Parabolic semi-orthogonal decompositions and Kummer flat
   invariants of log schemes},
   journal={Doc. Math.},
   volume={25},
   date={2020},
   pages={955--1009},
   issn={1431-0635},
   review={\MR{4151877}},
}

\bib{SSV16}{article}{
   author={Sagave, Steffen},
   author={Sch\"urg, Timo},
   author={Vezzosi, Gabriele},
   title={Derived logarithmic geometry I},
   journal={J. Inst. Math. Jussieu},
   volume={15},
   date={2016},
   number={2},
   pages={367--405},
   issn={1474-7480},
   review={\MR{3480969}},
   doi={10.1017/S1474748014000322},
}

\bib{TV18}{article}{
   author={Talpo, Mattia},
   author={Vistoli, Angelo},
   title={Infinite root stacks and quasi-coherent sheaves on logarithmic
   schemes},
   journal={Proc. Lond. Math. Soc. (3)},
   volume={116},
   date={2018},
   number={5},
   pages={1187--1243},
   issn={0024-6115},
   review={\MR{3805055}},
   doi={10.1112/plms.12109},
}

\end{biblist}
\end{bibdiv}

\end{document}